\renewcommand {\a}{ \alpha }
\newcommand{\e}{\epsilon}
\newcommand{\g}{\gamma}
\newcommand{\G}{\Gamma}
\renewcommand{\d}{\delta}
\newcommand{\s}{\sigma}
\renewcommand{\l}{\lambda}
\renewcommand{\L}{\Lambda}
\renewcommand{\t}{\theta}
\newcommand{\p}{\partial}
\newcommand{\Om}{\Omega}
\newcommand{\R}{ \mathbb R}
\newcommand {\gb}{\mathfrak b}
\newcommand{\gn}{\mathfrak n}
\newcommand{\gr}{\mathfrak r}
\newcommand {\gm}{\mathfrak m}
\newcommand {\GB}{\mathfrak B}
\newcommand {\GM}{\mathfrak M}
\newcommand{\GR}{\mathfrak R}
\newcommand {\GS}{\mathfrak S}
\newcommand {\GN}{\mathfrak N}
\newcommand {\ba}{\mathbf a}
\newcommand {\bx}{\mathbf x}
\newcommand {\bd}{\mathbf d}
\newcommand {\bz}{\mathbf z}
\newcommand {\by}{\mathbf y}
\newcommand{\CK}{\mathcal K}
\newcommand{\CL}{\mathcal L}
\newcommand{\CP}{\mathcal P}
\newcommand{\COB}{\mathscr B}
\newcommand{\COL}{\mathscr L}
\newcommand{\COA}{\mathscr A}
\newcommand{\plainC}[1]{\textup{{\textsf{C}}}^{#1}}
\newcommand{\plainL}[1]{\textup{{\textsf{L}}}^{#1}}
\DeclareMathOperator{\dom}{{Dom}}
\DeclareMathOperator {\re} {{Re}}
 \DeclareMathOperator{\rot}{{curl}}
\DeclareMathOperator{\codim}{{codim}}
\newtheorem{thm}{Theorem}[section]
\newtheorem{cor}[thm]{Corollary}
\newtheorem{lem}[thm]{Lemma}
\theoremstyle{definition}
\numberwithin{equation}{section}
\def\square{\RIfM@\bgroup\else$\bgroup\aftergroup$\fi
  \vcenter{\hrule\hbox{\vrule\@height.6em\kern.6em\vrule}\hrule}\egroup}
\begin{document}

\title
[Landau operator]
{Discrete spectrum distribution of the Landau Operator
Perturbed by an Expanding Electric Potential }
\author[G. Rozenblum, A. V. Sobolev]{Grigori Rozenblum, Alexander V. Sobolev}
\address{Dept. Mathematics Chalmers University of Technology
and   Dept. Mathematics Gothenburg University, Gothenburg, 41296, Sweden}
\email{grigori@math.chalmers.se}
\address{Department of Mathematics\\
University College London\\
Gower Street\\
London\\
WC1E 6BT, UK}
\email{asobolev@math.ucl.ac.uk}

\dedicatory{Dedicated to our teacher Professor Mikhail
Shl\"emovich Birman on the occasion of his 80-th
 birthday}
\keywords{Landau Hamiltonian, discrete spectrum, Toeplitz operators,
spectral asymptotics}
\subjclass[2000]{}

\date{\today}

\begin{abstract}

Under a perturbation by a decreasing potential, the Landau Hamiltonian
acquires some discrete eigenvalues between the Landau levels. We study
the perturbation by an "expanding" electric
potential $V(t^{-1}x)$, $t>0$, and derive
a quasi-classical formula for the counting function of the discrete spectrum
as $t\to \infty$.
\end{abstract}

\maketitle
\vskip 0.5cm

\section{Introduction and main result}

The two-dimensional Landau Hamiltonian $H_0  = (-i \nabla - \ba)^2$
describing a charged
 quantum particle moving in the plane in a constant magnetic field $B = \rot \ba$
  is one of the earliest explicitly solvable
models of Quantum Mechanics. Its spectrum consists of
infinitely degenerate eigenvalues (Landau levels)
$\L_q = B(2q+1),\; q=0,1,\dots$,  (see, e.g., \cite{LL}); we put
$\L_{-1} = -\infty$ for reference convenience.
Under a perturbation by an electric or magnetic field decaying at infinity,
the Landau levels split, forming clusters (generically,
 infinite) of eigenvalues with Landau levels being their limit points.
Various asymptotic properties of these clusters have been extensively studied
in the literature.
For instance, \cite{FP}, \cite{melroz03} \cite{raikov90},
\cite{RaiWar} studied the rate of convergence of the eigenvalues to their
limit points  for rapidly decaying potential perturbations.
It was found that for a compactly supported
electric potential the eigenvalues converge to Landau
levels superexponentially fast. A similar effect was observed for the perturbation by
a compactly supported magnetic field \cite{RozTa}
or impenetrable compact
obstacle \cite{PuRoz}.
Another natural problem is to analyze the eigenvalue behavior  as
the coupling constant in front of the perturbation becomes large.
In this case the eigenvalue asymptotics  is described by semi-classical formulas,
as shown in \cite{HeHe}, \cite{HeLe}. We refer to the above references for further
bibliography.

Our objective is to study the discrete spectrum of the Landau Hamiltonian
$H_0$ perturbed by an expanding potential $V^{(t)}(\bx) = V(t^{-1}\bx), t >0$.
Under the condition $V\in \plainL1(\R^2)\cap \plainL2(\R^2)$ the
operator $H = H^{(t)} = H_0+V^{(t)}$,
is properly defined as an operator sum in $\plainL2(\R^2)$,
and $V^{(t)}$ is $H_0$-compact.
Our aim is to investigate the number  $N(\l_1, \l_2; H^{(t)})$ of
the eigenvalues of $H^{(t)}$
on the interval $(\l_1, \l_2)\Subset (\L_\nu, \L_{\nu+1})$
with some $\nu = -1,0, 1, \dots,$ as $t\to \infty$.
If $\l_1 = -\infty$, we write $N(\l_2; H^{(t)})$.

The behavior of $N(\l_1, \l_2; H^{(t)})$ is
 determined  by the potential $V$ as
follows. For any $V\in\plainL1(\R^2)$ and
$-\infty\le \l <\mu\le \infty$, we define
\begin{equation}\label{lm:eq}
A(\l, \mu; V) = |\{ \bx: \l < V(\bx) < \mu\}|,\
A^{(0)}(\l; V) = |\{\bx: V(\bx) = \l\}|.
\end{equation}
The coefficients $A(\l, \mu; V)$ are finite if $\l\mu>0$. Say, for
$\mu > \l > 0$, we obtain by the Chebyshev inequality
\begin{equation*}
A(\l, \mu; V) \le  A(\l, \infty; V) < \frac{1}{\l}\|V\|_1 < \infty.
\end{equation*}
The coefficient $A$ is monotone in $\l, \mu$, so that the limits
$A(\l\pm 0, \mu\pm 0; V)$ are well defined with various combinations of signs $\pm$.
We will call the number $\l$ a \emph{generic}
value for $V$ if $A^{(0)}(\l; V)=0$; otherwise, this value
is called \emph{exceptional}. For a given
function V, there are at most countably many exceptional
values. For a generic $\l$, $A(\l-0, \mu; V) = A^{(\pm)}(\l, \mu; V)$,
otherwise, $A(\l-0, \mu; V) =
A(\l, \mu; V) + A^{(0)}(\l; V)$, and similarly for $\mu$.

For any real $ \l_1 < \l_2$, such that  $[\l_1, \l_2]$
does not contain any of $\L_q$, we define
\begin{equation}\label{1:fullcoeff}
\COA(\l_1, \l_2; V) = \frac{B}{2\pi}\sum_{q=0}^\infty
A(\l_1 - \L_q, \l_2-\L_q; V).
\end{equation}
Since the sets $\{\bx: \L_q -
\l_2<|V(\bx)|< \L_q - \l_1\}$ are disjoint for different $q$'s, this series
converges for $V\in\plainL1(\R^2)$, and
\begin{equation*}
\COA(\l_1, \l_2; V) \le A\bigl(
\min\{|\L_q -
\l_2|,|\L_q-\l_1|\}, \infty; |V| \bigr)<\infty.
\end{equation*}
%
The main result of the paper is contained in the
following Theorem:

\begin{thm}\label{main:thm}
Let $(\l_1, \l_2)\Subset (\L_\nu, \L_{\nu+1})$ with
some $\nu = -1,0, 1, \dots$. Suppose that
$V\in\plainL1(\R^2)\cap\plainL2(\R^2)$.
Then
\begin{equation*}
\begin{split}
    \COA(\l_1, \l_2; V)\le &\ \liminf_{t\to\infty}t^{-2}N(\l_1, \l_2; H^{(t)})\\[0.2cm]
    \le&\ \limsup_{t\to\infty}t^{-2}N(\l_1, \l_2; H^{(t)})
    \le \COA(\l_1-0, \l_2+0; V).
    \end{split}
\end{equation*}
If $\l_1-\L_q$ and $\l_2 - \L_q$ are generic for $V$ for all $q = 0, 1, \dots$,
then
%
\begin{equation}\label{main:eq}
\lim_{t\to\infty} t^{-2} N(\l_1, \l_2; H^{(t)}) =
\COA(\l_1, \l_2; V).
\end{equation}
\end{thm}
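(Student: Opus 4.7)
The strategy is to reduce the count $N(\l_1,\l_2;H^{(t)})$ in the gap to a sum over Landau levels of counts for the Toeplitz-type operators $T_q^{(t)} := P_q V^{(t)} P_q$, where $P_q$ denotes the spectral projection of $H_0$ onto the $q$-th Landau level, and then to establish quasi-classical asymptotics for each $T_q^{(t)}$ separately.

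\emph{Reduction to Toeplitz operators.} Using the Birman--Schwinger principle together with Glazman's variational lemma, I would recast $N(\l_1,\l_2;H^{(t)})$ in terms of the spectrum of the compact operator
\begin{equation*}
K(\l) := |V^{(t)}|^{1/2}(H_0-\l)^{-1}\sign(V^{(t)})|V^{(t)}|^{1/2},\quad \l\in(\l_1,\l_2),
\end{equation*}
and decompose $L^2(\R^2)=\bigoplus_q \ran P_q$. The target identity is
\begin{equation*}
N(\l_1,\l_2;H^{(t)}) = \sum_{q=0}^\infty n(\l_1-\L_q,\l_2-\L_q; T_q^{(t)}) + o(t^2),
\end{equation*}
which follows once one shows that the off-diagonal blocks $P_q V^{(t)} P_{q'}$, $q\neq q'$, produce only $o(t^2)$ eigenvalues inside the gap. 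That decoupling step is established by Ky Fan type $s$-number inequalities applied to $K(\l)$, together with Schatten-class control of $V^{(t)}(H_0-\l)^{-1}$ coming from $V\in\plainL2(\R^2)$.

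\emph{Toeplitz asymptotics and summation.} For each fixed $q$ the goal is
\begin{equation*}
\lim_{t\to\infty} t^{-2} n(a,b; T_q^{(t)}) = \frac{B}{2\pi} A(a,b; V)
\end{equation*}
for every generic pair $a,b$, with the corresponding $\liminf$/$\limsup$ one-sided bounds at exceptional values. The mechanism is semi-classical: the kernel of $P_q$ is Gaussian on the magnetic scale $B^{-1/2}$, much shorter than the spatial scale $t$ of $V^{(t)}$, so $T_q^{(t)}$ acts asymptotically as multiplication by $V^{(t)}(\bx)$ with phase-space density $B/(2\pi)$ per Landau level. I would prove this by approximating $V$ from above and below by bounded continuous compactly supported functions $V_\pm$ in $\plainL1\cap\plainL2$, computing the Berezin (coherent-state) symbol of $P_q V_\pm^{(t)} P_q$, identifying the leading count by a Riemann-sum/Weyl-type argument, and passing to the limit using the $\plainL1$ and $\plainL2$ norms to control the approximation error. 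Summation over $q$ then uses the absolute convergence of \eqref{1:fullcoeff} guaranteed by $V\in\plainL1$, together with a uniform-in-$t$ Chebyshev tail estimate, to exchange the limit and the sum.

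\emph{Main obstacle.} The critical difficulty is the decoupling of Landau levels. Since $\|V^{(t)}\|_\infty=\|V\|_\infty$ is independent of $t$, the "small coupling" reduction that treats $P_{q'}V^{(t)}P_q$, $q'\neq q$, as an operator-norm perturbation is unavailable here. The smallness must instead be extracted from the separation between the magnetic scale $B^{-1/2}$ and the expanding scale $t$ of the potential: it is the slow variation, not the magnitude, of $V^{(t)}$ that forces the off-diagonal Landau-level blocks to be negligible on the $t^2$ scale relevant to the counting function.
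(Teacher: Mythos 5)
Your high-level plan correctly identifies the three ingredients — reduce to Toeplitz operators on Landau subspaces, show the off-diagonal blocks are spectrally negligible, and prove a phase-space formula for $P_qV^{(t)}P_q$ — but both of your proposed mechanisms diverge from the paper's in ways that leave real gaps.

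For the reduction, you propose Birman--Schwinger via $K(\l)=|V^{(t)}|^{1/2}(H_0-\l)^{-1}\sign(V^{(t)})|V^{(t)}|^{1/2}$. This is problematic for counting in an \emph{interior} gap with a sign-indefinite $V$: the resolvent $(H_0-\l)^{-1}$ is indefinite on $(\L_\nu,\L_{\nu+1})$ and there is no monotonicity of $\s(K(\l))$ in $\l$, so the correspondence between eigenvalues of $H_0+V^{(t)}$ crossing $(\l_1,\l_2)$ and eigenvalues of $K(\l)$ crossing $1$ does not translate into a clean counting identity. The paper sidesteps this entirely by squaring: $N(\l_1,\l_2;H^{(t)}) = N(b^2;(H^{(t)}-a)^2)$ with $a=(\l_1+\l_2)/2$, $b=(\l_2-\l_1)/2$ (equations \eqref{1:transf}, \eqref{square:eq}), after which the operator $L^{(t)}(V,Z)=H_{0a}^2+V^{(t)}H_{0a}+H_{0a}V^{(t)}+Z^{(t)}$ is semi-bounded below and Glazman's lemma plus form-commutator estimates (Lemma \ref{BSred:lem}) give a clean two-sided decoupling into $P^{(J)}LP^{(J)}$ and a remainder with no spectrum below $\eta_0$. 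You would need to replace your Birman--Schwinger step with something of this type, or supply a genuine argument that $K(\l)$ controls the gap count for sign-indefinite $V$.

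For the Toeplitz asymptotics, your Berezin/coherent-state heuristic is the right physical picture, but as stated it is only a sketch, and the coherent-state/Weyl machinery is cleanest for $q=0$ (Bargmann space); for $q\ge1$ the kernel \eqref{projection:eq} involves Laguerre polynomials and the "symbol" identification requires work you have not supplied. The paper instead follows the Birman--Solomyak route: tile $\R^2$ by annular sectors $\Om_{m,l}$, prove that cross-sector terms $S^{(t)}(X_{m,l},X_{m',l'})$ contribute $o(t^2)$ eigenvalues (Lemma \ref{tiles:lem}, via the Gaussian off-diagonal decay in Lemma \ref{disjoint:lem}), deduce additivity of the asymptotic coefficients over sectors (Lemmas \ref{additivity.lem}, \ref{additivity1:lem}), and then compute explicitly for radial indicator weights using \eqref{psi:eq}, \eqref{ocenka:eq}, where the eigenvalues of $S_q^{(t)}(W)$ are known incomplete Laguerre integrals. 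This makes the whole argument elementary and avoids having to construct and control a symbol calculus for higher Landau levels. Your summation-over-$q$ and Chebyshev-tail remarks are fine and match the paper's handling of \eqref{3:maintermY}. Your closing observation about slow variation being the source of decoupling is correct and is exactly what Lemma \ref{disjoint:lem} exploits; but you still need a concrete mechanism, and the two you name (Birman--Schwinger in a gap, Berezin/Weyl for $q\ge1$) are the two places where your proof currently has no actual argument.
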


Note that the right hand side of the asymptotic formula \eqref{main:eq}
coincides with the natural quasi-classical expression
for the counting function
of the magnetic Schr\"odinger operator, see e.g.
\cite{Sob}. On the other hand one might
juxtapose this result with the classical Szeg\"o Theorem
deriving the canonical distribution for the Toeplitz type operators of Fourier
type, see \cite{GreSze}, Theorem 8.6(c).
This comparison is even more appropriate since the proof of Theorem
\ref{main:thm} relies on the spectral analysis of the Toeplitz type operator
$T^{(t)} = P_q V^{(t)} P_q$, where $P_q$ is the projection on the spectral subspace
associated with the Landau level $\L_q$.
Remembering that the non-zero spectra of the operators
$AB$ and $BA$ ($A, B$ being both compact) coincide,
instead of $T^{(t)}$ it is often more convenient to study the operator
$S^{(t)} = WP_q \overline W$ with a function $W$. If $W$ is radially symmetric, then
this operator splits into an orthogonal sum of one-dimensional operators,
whose eigenvalues (and their asymptotics) are computed using the explicit
formula for the integral kernel of $P_q$ (see \eqref{projection:eq}).
To handle the general case
we apply a method which is based on the approach put forward
by M. Birman and M. Solomyak to study weakly polar integral operators,
see \cite{BSpaper1}.
Precisely, we partition the plane $\R^2$
into disjoint annular sectors, i.e. domains of the form
$$
\Om_{m, l} = \biggl\{(\rho, \phi): (m-1)d<\rho\le md,\ \frac{2\pi}{N}(l-1)
< \phi\le \frac{2\pi}{N}l\biggr\},
m\in\mathbb N, l = 1, 2, \dots, N,
$$
with a fixed $d>0$ and natural $N$.
Choosing appropriate $d$ and $N$,
we approximate $W$ by a function which is constant on each $\Om_{m, l}$.
This reduces the operator $S^{(t)}$ to a block-matrix form.
The crucial point is that the off-diagonal entries do not contribute to the asymptotics,
which implies the "additivity" of the asymptotics in the function $W$. This property
allows one to reduce the problem to the radially symmetric case, for which
the eigenvalues are found in the closed form.

The above Toeplitz operators are linked with the initial Schr\"odinger operator
using the elementary formula
\begin{equation}\label{1:transf}
N(\l_1, \l_2;
H^{(t)})=N(-\infty,b^2; L^{(t)}), L^{(t)} = (H_0+V^{(t)}-a)^2,
\end{equation}
 where $a=(\l_1+\l_2)/2$, $b=(\l_2-\l_1)/2$, which was used
 previously in \cite{IT}, \cite{melroz03} in similar circumstances.
We represent $L^{(t)}$ in the block-matrix form with the entries
of the form $P_q L^{(t)} P_{q'}$. The
off-diagonal terms do not affect the asymptotics, and the diagonal
ones are directly expressed via the Toeplitz operators of the form $T^{(t)}$.

The paper is organized as follows. Section \ref{AsCoeff} is devoted to some abstract
operator theory. For families of semi-bounded operators depending
on a parameter $t\in(0,\infty)$ we introduce the
\emph{asymptotic coefficients} describing the
asymptotic distribution as $t\to \infty$ of
eigenvalues in a given interval, and establish their general properties
which are used throughout the the paper in different concrete environments.
In Sections
\ref{Bounds:Section} and \ref{Toeplitz:Section} we prove the crucial asymptotic formulas
for the Toeplitz operators $T^{(t)}$.
The reduction of the initial problem to the Toeplitz operators is implemented
in Sections \ref{reduction:sect} and
\ref{nonsmooth}.
The Appendix contains some elementary analytic
properties of level sets, needed for our proofs.

\textbf{Acknowledgements.}
The asymptotic problem considered in the paper appeared as a result of
discussions of the first author (GR) with
his colleague-physicist, R. Shekhter. A decisive part
of the work was done by the authors when enjoying  the
hospitality of the Isaac Newton Institute for
Mathematical Sciences (Cambridge, UK) in the framework
of the programs ``Spectral Theory and Partial
Differential Equations'' and ``Analysis on Graphs''.
It is our pleasure to thank the Institute and the
organizers of the programs for providing this
opportunity.

\section{Asymptotic coefficients}\label{AsCoeff}

Let $L$ be a self-adjoint operator, semi-bounded from below, with
\begin{equation*}
\eta_0 = \eta_0(L) = \inf \s_{\textup{\tiny ess}} (L).
\end{equation*}
Denote by $N(\eta; L), \eta < \eta_0$ the number of discrete eigenvalues of
$L$ strictly below $\eta$.
If $K$ is a compact operator, then $\s_{\textup{\tiny ess}} (K) = \{0\}$ and
we use the traditional notation
\begin{equation*}
n_-(\l; K) = N(-\l; K),\ n_+(\l; K) = N (-\l; -K), \l >0,
\end{equation*}
for the counting functions of the negative and positive eigenvalues respectively. For
an arbitrary
compact operator $K$, not necessarily self-adjoint,
introduce also the counting function
of its singular values:
\begin{equation*}
n(\l; K) = n_+(\l^2; K^*K).
\end{equation*}
Distribution functions for eigenvalues of the
sum (and difference) of operators satisfy certain
inequalities. For compact operators they are known as Ky
Fan inequalities and are presented in numerous
sources. However we need this kind of relations for general
semi-bounded operators. Although these inequalities
are by no means new, we were unable to locate a
convenient reference for the particular form we intend to use.
Therefore we present here a short explanation.

First, recall the version of the min-max principle for the counting function,
which is usually referred to as Glazman's Lemma:
%
%

\begin{lem}\label{Glazman:lem}
Let $L$ be a semi-bounded operator with $\eta_0=\inf\s_{\textup{\tiny ess}}(L)$,
and let $\bd(L)$ be the domain of the quadratic form $l[u]$ of the operator  $L$.
Then
$\eta< \eta_0$ if and only if there exists a linear set
\footnote{Subspace, which is not necessarily closed}
$\COL\subset  \bd(L)$ of finite codimension, satisfying
the property
\begin{equation}\label{CL:eq}
l[u]\ge \eta\|u\|^2 \ \ \ \textup{for all }\ u\in \COL.
\end{equation}
Moreover,
\begin{equation}\label{Glazman}
N(\eta; L)=\min \codim\COL,
\end{equation}
where the minimum is taken over all linear sets
$\COL\subset \bd(L)$, satisfying \eqref{CL:eq}.
\end{lem}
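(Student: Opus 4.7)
The plan is to derive the lemma from the spectral theorem for $L$. Let $\{E_\lambda\}$ denote the spectral family of $L$ and set $\CH_- = \ran E_{(-\infty,\eta)}$, $\CH_+ = \ran E_{[\eta,\infty)}$. I would first dispatch the ``if'' direction together with the upper bound in \eqref{Glazman}. Assume $\eta < \eta_0$. Then the spectrum of $L$ in $(-\infty,\eta)$ is purely discrete with finite total multiplicity, so $\dim \CH_- = N(\eta; L) < \infty$. Semi-boundedness from below ensures that vectors with bounded spectral support lie in $\bd(L)$, and for any $u \in \CH_+ \cap \bd(L)$,
\[
l[u] - \eta\|u\|^2 = \int_\eta^\infty (\lambda - \eta)\, d\|E_\lambda u\|^2 \ge 0.
\]
Thus $\COL_0 := \CH_+ \cap \bd(L)$ is a linear subspace of codimension $N(\eta; L)$ in $\bd(L)$ satisfying \eqref{CL:eq}, establishing both the ``if'' direction and the bound $\min\codim\COL \le N(\eta; L)$.

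For the reverse inequality in \eqref{Glazman}, suppose $\COL \subset \bd(L)$ is a linear subspace with $\codim\COL = k$ on which \eqref{CL:eq} holds. The key argument is a dimension count: I claim $\dim\CH_- \le k$. Otherwise, pick a subspace $\mathcal F \subset \CH_-$ of dimension $k+1$. Since $L$ is semi-bounded from below and $\mathcal F$ has spectral support inside $(-\infty,\eta)$, this support is bounded, so $\mathcal F \subset \bd(L)$ automatically. The quotient map $\bd(L) \to \bd(L)/\COL$ has image dimension $k$, so its restriction to $\mathcal F$ has nontrivial kernel, producing some $0 \ne u \in \mathcal F \cap \COL$. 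The spectral measure of $u$ sits in the \emph{open} half-line $(-\infty,\eta)$, whence
\[
\eta\|u\|^2 - l[u] = \int_{-\infty}^\eta (\eta - \lambda)\, d\|E_\lambda u\|^2 > 0
\]
strictly, contradicting \eqref{CL:eq}. Hence $N(\eta; L) \le k$, which combined with the previous paragraph gives \eqref{Glazman}.

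Finally, for the ``only if'' implication, if $\eta > \eta_0$ then, since $\eta_0 \in \s_{\textup{\tiny ess}}(L)$, one can choose a bounded interval $I \subset (-\infty,\eta)$ with $\dim\ran E_I = \infty$; it meets any finite-codim $\COL$ nontrivially, and the same spectral computation yields a vector violating \eqref{CL:eq}. The one delicate point throughout is the \emph{strict} inequality $l[u] < \eta\|u\|^2$ for nonzero $u \in \CH_-$; it rests on using the \emph{open} half-line in the definition of $\CH_-$, so that $\eta - \lambda > 0$ pointwise on the spectral support. Beyond this, the argument reduces to routine bookkeeping with form domains and the spectral theorem, and I anticipate no serious obstacle.
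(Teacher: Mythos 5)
The paper does not actually prove this lemma; it cites Glazman's book (Ch.~1, Theorems~12, 12bis) and says equivalent formulations appear in many texts. So there is no ``paper proof'' to compare against, and the question reduces to whether your spectral-theorem argument is a correct derivation.

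It essentially is, and it is the standard one. The ``if'' direction and the inequality $\min\codim\COL\le N(\eta;L)$ are correct: for $\eta<\eta_0$ the projection $E_{(-\infty,\eta)}$ has finite-dimensional range contained in $\bd(L)$, the spectral projections preserve $\bd(L)$, and $\COL_0=\ran E_{[\eta,\infty)}\cap\bd(L)$ has codimension exactly $N(\eta;L)$ in $\bd(L)$ while satisfying \eqref{CL:eq}. The reverse inequality via the dimension count is also sound: $\CH_-$ has bounded spectral support (below by semi-boundedness, above by $\eta$), so it sits inside $\dom(L)\subset\bd(L)$; any $(k+1)$-dimensional subspace of it must meet a codimension-$k$ $\COL$ nontrivially, and the resulting nonzero vector gives a strict violation of \eqref{CL:eq} because its spectral measure is a nontrivial finite measure concentrated on the \emph{open} half-line $(-\infty,\eta)$, where $\eta-\l>0$ pointwise. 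You are right to flag that strictness as the delicate point, and your justification for it is adequate.

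The one place your argument is incomplete is the ``only if'' direction: you treat $\eta>\eta_0$ (pick an interval around $\eta_0$ strictly below $\eta$ where $E_I$ has infinite rank) but say nothing about $\eta=\eta_0$. In fact, read literally, the biconditional in the lemma \emph{fails} at $\eta=\eta_0$ whenever $L$ has only finitely many eigenvalues below $\eta_0$: then $\ran E_{[\eta_0,\infty)}\cap\bd(L)$ already has finite codimension and satisfies $l[u]\ge\eta_0\|u\|^2$, yet $\eta_0\not<\eta_0$. So you could not have closed this case, and it is a small imprecision in the lemma's phrasing rather than in your proof. It is harmless for the paper, which only ever applies the lemma with $\eta$ strictly below $\eta_0$ and only via the identity \eqref{Glazman}; but since you present the ``iff'' as a result you are proving, it would be cleaner to say explicitly that the equivalence is established for $\eta\ne\eta_0$, or to replace the biconditional with the two one-sided statements you actually prove.
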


In this form Glazman's Lemma appeared in \cite{Glaz}
(Ch. 1, Theorems 12, 12bis); equivalent formulations (however in
terms of eigenvalues, and not the distribution
function), are given in many  books on spectral
theory.

The most general form of the eigenvalue distribution function inequality we
need is the following.

\begin{lem} \label{PertLemma.Gen}
Let $L_1, L_2$ be semi-bounded (from below) self-adjoint
operators
such that $\bd(L_1)\subset \bd(L_2)$ and $L_2$ is $L_1$-form bounded
with a bound strictly less than $1$.
Then
$\eta_0(L_1+L_2)\ge \eta_0(L_1) + \eta_0(L_2)$
and
\begin{equation}\label{addgen1:eq}
N(\eta_1 + \eta_2; L_1+L_2) \le N(\eta_1;  L_1) + N(\eta_2; L_2)
\end{equation}
for any $\eta_j<\eta_0(L_j)$, $j=1,2$.
and
\begin{equation}\label{addgen2:eq}
N(\eta_1 - \eta_2; L_1 - L_2) \ge N(\eta_1; L_1) - N(\eta_2; L_2),
\end{equation}
for any $\eta_1, \eta_2$ such that
$\eta_1-\eta_2  <\eta_0(L_1-L_2)$, $\eta_2 < \eta_0(L_2)$.
\end{lem}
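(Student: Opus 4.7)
The plan is to deduce both inequalities from Glazman's Lemma \ref{Glazman:lem}, applied in turn to $L_1$, $L_2$, and to the form sum $L_1+L_2$ (respectively, $L_1-L_2$). The $L_1$-form boundedness of $L_2$ with bound strictly less than $1$ guarantees, via the KLMN theorem, that $L_1+L_2$ is a well-defined semi-bounded self-adjoint operator whose quadratic form domain equals $\bd(L_1)$; in particular Glazman's Lemma applies to it with the same form domain without further preparation.

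For the upper bound \eqref{addgen1:eq}, I would fix $\eta_j<\eta_0(L_j)$ and set $M_j = N(\eta_j; L_j)$, which is finite. By the minimum in \eqref{Glazman} there exist linear sets $\COL_j\subset\bd(L_j)$ with $\codim\COL_j = M_j$ on which $l_j[u]\geq \eta_j\|u\|^2$. I would then take the test set $\COL := \COL_1\cap\COL_2\cap\bd(L_1)\subset\bd(L_1)$. Using the elementary linear-algebra fact that intersecting a subspace of codimension $k$ inside $V$ with any subspace $U\subset V$ produces a set of codimension at most $k$ inside $U$, applied twice (once with $V=\bd(L_1)$ and once with $V=\bd(L_2)$, $U=\bd(L_1)$), yields $\codim_{\bd(L_1)}\COL \leq M_1+M_2$. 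On $\COL$ the two form inequalities add to $(l_1+l_2)[u]\geq(\eta_1+\eta_2)\|u\|^2$, and Glazman's Lemma in the reverse direction, applied to $L_1+L_2$, delivers \eqref{addgen1:eq}.

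For \eqref{addgen2:eq} I would invoke \eqref{addgen1:eq} with the decomposition $L_1 = (L_1-L_2)+L_2$ at thresholds $\eta_1-\eta_2$ and $\eta_2$, obtaining $N(\eta_1;L_1)\leq N(\eta_1-\eta_2;L_1-L_2)+N(\eta_2;L_2)$, and rearranging. The inequality $\eta_1-\eta_2<\eta_0(L_1-L_2)$ imposed in the statement ensures that $L_1-L_2$ is itself a legitimate semi-bounded operator, and the form-bound of $L_2$ relative to $L_1-L_2$ is a routine consequence of the original hypothesis. Finally, the assertion $\eta_0(L_1+L_2)\geq\eta_0(L_1)+\eta_0(L_2)$ is a corollary of \eqref{addgen1:eq}: any real $\eta$ strictly below the right-hand side can be split as $\eta_1+\eta_2$ with $\eta_j<\eta_0(L_j)$, whereupon $N(\eta;L_1+L_2)\leq N(\eta_1;L_1)+N(\eta_2;L_2)<\infty$ forces $\eta$ to lie strictly below the essential spectrum of $L_1+L_2$.

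The main obstacle is not depth but bookkeeping: one must take the intersection of the test sets $\COL_j$ inside the correct (smaller) form domain $\bd(L_1)$ and count codimension with respect to it, and one must verify that the form-bound hypothesis transfers from the pair $(L_1, L_2)$ to the pair $(L_1-L_2, L_2)$ used in the proof of \eqref{addgen2:eq}. Both checks are standard but deserve a short explicit argument rather than being left implicit.
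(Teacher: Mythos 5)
Your proof is essentially the same as the paper's: both prove \eqref{addgen1:eq} by taking the Glazman test subspaces $\COL_j$ for $L_j$, intersecting them, and adding the two form inequalities; both then obtain \eqref{addgen2:eq} by rewriting $L_1 = (L_1-L_2)+L_2$ and rearranging. Your version is, if anything, a bit more careful about the codimension bookkeeping (intersecting inside the smaller form domain $\bd(L_1)$), where the paper simply writes $\COL_1\cap\COL_2\subset\bd(L_1+L_2)$; and your last paragraph supplies the short argument for $\eta_0(L_1+L_2)\ge\eta_0(L_1)+\eta_0(L_2)$ that the paper leaves implicit.

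One small inaccuracy worth flagging: you assert that $L_2$ being $(L_1-L_2)$-form bounded with bound strictly less than $1$ is ``a routine consequence of the original hypothesis.'' It is not. If $|l_2[u]|\le a\,l_1[u]+b\|u\|^2$ with $a<1$ (and $L_1\ge 0$ say), then one only gets $|l_2[u]|\le\frac{a}{1-a}(l_1-l_2)[u]+\frac{b}{1-a}\|u\|^2$, and $\frac{a}{1-a}<1$ requires $a<\tfrac12$. So invoking \eqref{addgen1:eq} verbatim as a lemma applied to the pair $(L_1-L_2,L_2)$ does not have its hypotheses satisfied for general $a<1$. The argument survives anyway, because the relative form bound $<1$ is used only through its consequences: that $L_1\pm L_2$ are semi-bounded self-adjoint operators with form domain $\bd(L_1)$ and quadratic forms $l_1\pm l_2$. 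These facts are all available directly from the original hypotheses via KLMN, and they are all that the Glazman argument for the triple $(L_1-L_2,\ L_2,\ L_1)$ actually uses. So the correct phrasing is that one re-runs the Glazman argument for the new decomposition, not that one re-verifies the lemma's hypotheses; the paper's ``obvious change of notation'' should be read in the same way.
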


\begin{proof}
Denote $N_j = N(\eta_j; L_j)$, and let
$\COL_j\in\bd(L_j)$ be a subspace of codimension
$N_j$ on which $(L_ju,u)\ge\eta_j\|u\|^2$, $j = 1, 2$.
Then the subspace $\COL=\COL_1\cap\COL_2\subset \bd(L_1+L_2)$
has codimension not greater
than $N_1+N_2$, and  for $u\in\COL$,
\begin{equation*}
    ((L_1 + L_2)u,u)\ge (\eta_1+\eta_2)\|u\|^2,
\end{equation*}
so \eqref{addgen1:eq} follows from Glazman's lemma.
The inequality \eqref{addgen2:eq} follows from
\eqref{addgen1:eq} by
an obvious change of notation.
\end{proof}

We also need a simpler version of the above
inequalities. Let $L$ be a semi-bounded operator, and
let $K$ be compact and self-adjoint. Clearly,
$\eta_0(L + K) = \eta_0(L)$, and by
Lemma~\ref{PertLemma.Gen},
\begin{equation}\label{addgen:eq}
N(\eta - \l; L + K) \le N(\eta; L) + n_-(\l; K)
\end{equation}
for any $\eta<\eta_0$, $\l >0$.
It is useful to write a similar inequality for a pair of compact
self-adjoint operators $K_1, K_2$:
\begin{equation}\label{add:eq}
n_\pm(\l_1+\l_2; K_1 + K_2)\le n_\pm(\l_1; K_1) + n_\pm(\l_2; K_2),
\end{equation}
for any $\l_1, \l_2>0$.
For a pair of compact operators (not necessarily self-adjoint)
a similar inequality holds:
\begin{equation}\label{adds:eq}
n(\l_1+\l_2; K_1 + K_2)\le n(\l_1; K_1) + n(\l_2; K_2)
\end{equation}
for any $\l_1, \l_2>0$, see \cite{BS}, Section 9.2, Theorem 9.

For a family of semi-bounded operators $L = L^{(t)}$,
depending on a parameter $t>0$, with the value $\eta_0
= \eta_0(L^{(t)})$ independent of $t$, we introduce
the asymptotic coefficients
\begin{equation*}
\GB(\eta; L)  =  \limsup_{t\to\infty}
t^{-2} N\bigl(\eta; L^{(t)}\bigr),\ \
\gb(\eta;  L)  = \liminf_{t\to\infty}
t^{-2} N\bigl(\eta; L^{(t)}), \ \ \eta < \eta_0.
\end{equation*}
Clearly, one can introduce similar
asymptotic coefficients, with $t^{-2}$ replaced by
$t^{-\gamma}$ with any $\gamma > 0$.
Although such characteristics of operator families may prove to be useful
for some other eigenvalue counting
problems, the case $\g = 2$ is sufficient for our purposes.
General properties of such coefficients are the same as
for $\gamma=2$. The asymptotic coefficients, just
introduced, are not necessarily continuous in $\eta$, but they
are monotone.
We systematically use naturally defined limits such as $\GB(\eta\pm 0; L)$.

In order to keep in line with the traditional definition
of counting functions $n_\pm$ for compact operators,
for a compact self-adjoint family $K = K^{(t)}$ we denote
\begin{equation*}
\GR^{(\pm)}(\l; K) = \GB(-\l; \mp K),\ \gr^{(\pm)}(\l; K) = \gb(-\l; \mp K),
\end{equation*}
and for arbitrary compact family introduce also
\begin{equation*}
\GR (\l; K) = \GR^{(+)}(\l^2; K^*K),\ \gr(\l; K) = \gr^{(+)}(\l^2; K^*K).
\end{equation*}
The bounds \eqref{addgen1:eq},
\eqref{addgen2:eq}, \eqref{addgen:eq} and \eqref{add:eq}
imply similar bounds for the functionals $\GB, \gb$. In particular,
for a semi-bounded $L$ and compact $K$ it follows from \eqref{addgen:eq} that
\begin{equation}\label{gmadd:eq:K1}
\begin{split}
\GB(\eta-\l; L+K)\le & \ \GB(\eta; L) + \GR^{(-)}(\l; K),\\[0.2cm]
\gb(\eta-\l; L+K)\le & \ \gb(\eta; L) + \GR^{(-)}(\l; K),\ \ \eta < \eta_0,\  \l >0.
\end{split}
\end{equation}
For compact operators, the inequality \eqref{add:eq} produces the bounds
\begin{equation}\label{gmadd:eq:K}
\begin{split}
\GR(\l_1+\l_2; K_1+K_2) \le \GR(\l_1; K_1)
+ \GR(\l_2; K_2),\\[0.2cm]
\gr(\l_1+\l_2; K_1+K_2) \le \gr(\l_1; K_1) +
\GR(\l_2; K_2),
\end{split}
\end{equation}
for any $\l_1, \l_2>0$
and similar bounds hold for the functionals $\GR^{(\pm)}$, $\gr^{(\pm)}$.

We systematically use analogues of Birman-Solomyak
asymptotic perturbation lemma for the eigenvalues, see
\cite{BSpaper}.

\begin{lem}\label{AsLemma2}
Let $L=L^{(t)}, t>0,$ be a family of self-adjoint semi-bounded from below
operators with a value of $\eta_0 = \eta_0(L^{(t)})$ independent of $t$.
Suppose that for any $\d>0$ the family $L$ can be represented as
$L = L_{\d} + Y'_{\d} + Y''_{\d}$
with some $L^{(t)}$-form-compact self-adjoint
$Y'_{\d} = Y'^{(t)}_\d$ and
$Y''_{\d} = Y''^{(t)}_\d$, such that
\begin{equation}\label{AsLemma2:eq}
    \lim_{\d\to0}\GB(\tau;  L + M Y''_\d)=0
\end{equation}
for any $\tau < \eta_0$ and any $M\in \R$.
Then for any $\eta < \eta_0$
\begin{equation}\label{AsLemma2:eqM1}
\begin{split}
  \lim_{\e\downarrow 0} &\liminf_{M\uparrow 1}
  \liminf_{\d\to 0}\GB(\eta-\e; L_\d + MY'_{\d})
   \\
   &\  \le
   \GB(\eta; L)\le
   \lim_{\e\downarrow 0} \limsup_{M\downarrow 1}
   \limsup_{\d\to0}\GB(\eta+\e; L_\d+ MY'_{\d});\\
\lim_{\e\downarrow 0} &\liminf_{M\uparrow 1}
\liminf_{\d\to 0}\gb(\eta-\e;  L_\d + MY'_{\d})\\[0.2cm]
\le & \ \gb(\eta; L) \le
\lim_{\e\downarrow 0}\limsup_{M\downarrow 1}
\limsup_{\d\to0}\gb(\eta+\e; L_\d+ MY'_{\d}).
   \end{split}
\end{equation}
\end{lem}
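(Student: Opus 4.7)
My plan is to follow the Birman--Solomyak scheme for asymptotic perturbation lemmas, using the additivity inequalities \eqref{addgen1:eq} and \eqref{addgen2:eq} of Lemma~\ref{PertLemma.Gen} together with their asymptotic versions \eqref{gmadd:eq:K1}. The organising idea is to separate the ``main'' operator $L_\delta + M Y'_\delta$ from the ``remainder'' $Z_{\delta, M} := (1-M)\, Y'_\delta + Y''_\delta$ via the trivial identities
\begin{equation*}
L \;=\; \bigl(L_\delta + M Y'_\delta\bigr) + Z_{\delta, M},
\qquad
L_\delta + M Y'_\delta \;=\; L - Z_{\delta, M},
\end{equation*}
and to show that the contribution of $Z_{\delta, M}$ is asymptotically negligible in the appropriate iterated limit.

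For the upper bound on $\GB(\eta; L)$ in \eqref{AsLemma2:eqM1} I would use the first identity and apply \eqref{addgen1:eq} with $\eta_1 = \eta + \e$, $\eta_2 = -\e$ to obtain
\begin{equation*}
N(\eta; L^{(t)}) \;\le\; N\bigl(\eta + \e;\, L_\delta^{(t)} + M Y'^{(t)}_\delta\bigr) + N\bigl(-\e;\, Z^{(t)}_{\delta, M}\bigr).
\end{equation*}
Dividing by $t^2$, taking $\limsup$ in $t$, and then passing successively to $\limsup_{\delta\to 0}$, $\limsup_{M\downarrow 1}$, and $\lim_{\e\downarrow 0}$ gives the asserted inequality, provided the remainder contribution $\GB(-\e; Z_{\delta, M})$ vanishes in these limits. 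Symmetrically, the lower bound on $\GB(\eta; L)$ follows from \eqref{addgen2:eq} applied to the second identity with $\eta_1 = \eta$, $\eta_2 = \e$, yielding
\begin{equation*}
N\bigl(\eta - \e;\, L_\delta^{(t)} + M Y'^{(t)}_\delta\bigr) \;\ge\; N(\eta; L^{(t)}) - N\bigl(\e;\, Z^{(t)}_{\delta, M}\bigr),
\end{equation*}
with the iterated $\liminf$'s taken in the complementary order. The bounds for $\gb(\eta; L)$ are obtained by the same argument using the $\gb$-versions of \eqref{gmadd:eq:K1}. The asymmetric choice of $M \uparrow 1$ versus $M \downarrow 1$ for the two sides is what keeps the sign of $(1-M)$ compatible with the direction of the relevant inequality.

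The main obstacle is to verify that $\GB(\pm\e;\, Z_{\delta, M}) \to 0$ in the iterated limit $\limsup_{M \to 1} \limsup_{\delta \to 0}$. Since $Z_{\delta, M}$ is not directly of the form $L + M' Y''_\delta$ covered by the hypothesis \eqref{AsLemma2:eq}, I would use \eqref{addgen1:eq} once more to split the remainder into its two pieces,
\begin{equation*}
N\bigl(-\e;\, Z^{(t)}_{\delta, M}\bigr) \;\le\; N\bigl(-\tfrac{\e}{2};\, (1-M)\, Y'^{(t)}_\delta\bigr) + N\bigl(-\tfrac{\e}{2};\, Y''^{(t)}_\delta\bigr),
\end{equation*}
control the first term by letting $M \to 1$ (using the $L^{(t)}$-form-compactness of $Y'_\delta$, so that the scalar multiple $(1-M)\, Y'_\delta$ contributes asymptotically nothing below $-\e/2$), and handle the second term by rewriting $Y''_\delta$ as a perturbation of $L + M' Y''_\delta$ for a suitable $M'$, absorbing the resulting $L$-contribution back into the main counting function through a further application of \eqref{addgen1:eq}, and invoking the hypothesis \eqref{AsLemma2:eq}. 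Careful bookkeeping of the order $\delta \to 0$, then $M \to 1$, then $\e \downarrow 0$ of these limits is precisely what produces the iterated-limit form of the conclusion~\eqref{AsLemma2:eqM1}.
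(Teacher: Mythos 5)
Your proposal runs into a genuine gap at the very first step, and the subsequent patch makes it worse rather than better.

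The decomposition $L = (L_\d + M Y'_\d) + Z_{\d,M}$ with $Z_{\d,M} = (1-M)Y'_\d + Y''_\d$ cannot be fed into Lemma~\ref{PertLemma.Gen}, because that lemma requires \emph{both} summands $L_1, L_2$ to be semi-bounded self-adjoint operators with well-defined $\eta_0(L_j)$ and $N(\eta_j;L_j)$. The hypotheses here only give that $Y'_\d$ and $Y''_\d$ are $L$-form-compact; they need not be bounded, compact, or even semi-bounded on their own. In fact, in the application to Theorem~\ref{KVZ:thm} one has $Y'_\d = V'_\d H_{0a} + H_{0a} V'_\d + Z'_\d$, which is an unbounded symmetric form that is not bounded from below, so the counting functions $N(-\e; Z_{\d,M})$, $N(-\e/2;(1-M)Y'_\d)$, $N(-\e/2; Y''_\d)$ appearing in your argument are not defined (or are identically $+\infty$). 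For the same reason, the assertion that $(1-M)Y'_\d$ ``contributes asymptotically nothing below $-\e/2$'' as $M\to1$ does not follow from form-compactness: that notion says nothing about the negative spectrum of $Y'_\d$ standing alone. Splitting $Z_{\d,M}$ into two pieces only multiplies the difficulty.

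The device the paper uses, and which your proposal misses, is to distribute $L_\d$ between the two summands so that each is semi-bounded: take $\mu\in(0,1)$ and write $L = L_1 + L_2$ with $L_1 = (1-\mu)L_\d + Y'_\d$ and $L_2 = \mu L_\d + Y''_\d$. Both pieces are now genuine semi-bounded self-adjoint operators (their form domains coincide with that of $L_\d$), so \eqref{addgen1:eq} applies. Rescaling inside the counting function gives $N(\eta_1;L_1) = N\bigl(\eta_1/(1-\mu); L_\d + (1-\mu)^{-1}Y'_\d\bigr)$ and $N(\eta_2;L_2) = N\bigl(\eta_2/\mu; L_\d + \mu^{-1}Y''_\d\bigr)$; the second is controlled by hypothesis~\eqref{AsLemma2:eq} with $M = \mu^{-1}$ (read with $L_\d$ in place of $L$, which is how the condition is actually verified in Theorem~\ref{KVZ:thm}), while the first produces the family $L_\d + M Y'_\d$ with $M = (1-\mu)^{-1}\downarrow 1$ as $\mu\downarrow 0$, together with the slack $\e$ coming from monotonicity of $N$ in the spectral parameter. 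The lower bound is obtained symmetrically from \eqref{addgen2:eq} with $L_1 = (1+\mu)L_\d + Y'_\d$, $L_2 = \mu L_\d - Y''_\d$, giving $M = (1+\mu)^{-1}\uparrow 1$. Without this redistribution of $L_\d$, the Ky Fan-type step at the heart of your argument does not go through.
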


\begin{proof}  It suffices to prove the Lemma for $\eta_0 = 0$.
Using \eqref{addgen1:eq} with $L_1 = (1-\mu) L + Y'_\d$ and
$L_2 = \mu L + Y''_\d$, we obtain
\begin{equation*}
\gb(\eta; L) \le \gb  (\eta + \e; (1-\mu) L_{\d} + Y'_\d)
+ \GB(-\e; \mu L_{\d} + Y''_{\d}),
\end{equation*}
for any $\eta <0$, $\mu\in (0, 1)$ and any $0<\e<|\eta| $.
Using \eqref{AsLemma2:eq}, and passing to the limit as $\d\to 0$, we get
\begin{equation*}
\gb(\eta; L)
\le   \limsup_{\d\to 0}
\gb(\eta+\e;  L_{\d} + (1-\mu)^{-1}Y'_\d).
\end{equation*}
Passing to the limit as $\mu\downarrow 0$ and  $\e\downarrow 0$,
we get the proclaimed upper bound for $\gb(\eta; L)$, where
$M=(1-\mu)^{-1}$.  Similarly for $\GB(\eta; L)$.

For the lower bound we use \eqref{addgen2:eq}
with $L_1 = (1+\mu) L + Y'_\d$ and
$L_2 = \mu L - Y''_\d$, which gives
\begin{equation*}
\gb(\eta; L)\ge \gb(\eta-\e; (1+\mu)L_{\d} + Y'_\d)
- \GB(-\e; \mu L_{\d} - Y''_\d)
\end{equation*}
for any $\eta <0$, $\mu\in (0, 1)$ and $\e>0$.
Using \eqref{AsLemma2:eq} again, and passing to the limit as $\d\to 0$, we get
\begin{equation*}
\gb(\eta; L)
\ge   \liminf_{\d\to 0}
\gb( \eta-\e;  L_{\d} + (1+\mu)^{-1}Y'_\d).
\end{equation*}
Passing to the limit as $\mu\downarrow 0$ and  $\e\downarrow 0$,
we get the proclaimed lower bound for $\gb(\eta; L)$,
where $M = (1+\mu)^{-1}$.
Similarly for $\GB(\eta; L)$.
\end{proof}

\begin{lem}\label{AsLemma1}
Let $L=L^{(t)}, t>0,$ be a family of self-adjoint semi-bounded from below
operators with a value of $\eta_0 = \eta_0(L^{(t)})$ independent of $t$.
Suppose that for any $\d>0$ the family $L$ can be represented as
$L = L_{\d} + K'_{\d}$ with a compact self-adjoint $K'_{\d} = K'^{(t)}_\d$, such that
\begin{equation}\label{AsLemma:eq}
    \lim_{\d\to0}\GR(\e; K'_\d)=0
\end{equation}
for any $\e>0$.
Then for any $\eta < \eta_0$
\begin{equation}\label{AsLemma:eqM1}
\begin{split}
   &\lim_{\e\downarrow0}\liminf_{\d\to 0}\GB(\eta-\e; L_\d)\le
   \GB(\eta; L)\le\lim_{\e\downarrow0}\limsup_{\d\to0}\GB(\eta+\e; L_\d);\\
&\lim_{\e\downarrow0}\liminf_{\d\to0}\gb(\eta-\e; L_\d)\le
   \gb(\eta; L)\le\lim_{\e\downarrow0}\limsup_{\d\to 0}\gb(\eta + \e; L_\d).
   \end{split}
\end{equation}
\end{lem}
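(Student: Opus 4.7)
The plan is to follow the same strategy as in the proof of Lemma~\ref{AsLemma2}, but in a much simpler form, since the perturbation $K'_\delta$ is now compact and self-adjoint rather than merely form-compact, so no auxiliary parameter $M$ or splitting $Y' + Y''$ is needed. The whole argument reduces to two applications of the perturbation inequality \eqref{gmadd:eq:K1}, one in each direction, together with the observation that for self-adjoint $K$ the singular-value counting function dominates both $n_\pm$, so
\begin{equation*}
\GR^{(\pm)}(\e; K'_\d)\le \GR(\e; K'_\d)\longrightarrow 0 \quad \text{as } \d\to 0,
\end{equation*}
by the hypothesis \eqref{AsLemma:eq}.

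For the upper bound on $\GB(\eta; L)$, I would decompose $L = L_\d + K'_\d$ and apply \eqref{gmadd:eq:K1} with the shift $\eta\mapsto\eta+\e$, $\l=\e$, to obtain
\begin{equation*}
\GB(\eta; L)=\GB((\eta+\e)-\e; L_\d + K'_\d) \le \GB(\eta+\e; L_\d) + \GR^{(-)}(\e; K'_\d).
\end{equation*}
Since this holds for every $\d>0$ and the second term tends to $0$, passing to $\limsup_{\d\to 0}$ leaves $\GB(\eta; L)\le\limsup_{\d\to 0}\GB(\eta+\e; L_\d)$, and letting $\e\downarrow 0$ gives the right-hand inequality in \eqref{AsLemma:eqM1}.

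For the lower bound I would reverse the roles by writing $L_\d = L + (-K'_\d)$ and applying \eqref{gmadd:eq:K1} again, which gives
\begin{equation*}
\GB(\eta-\e; L_\d) \le \GB(\eta; L) + \GR^{(-)}(\e; -K'_\d) = \GB(\eta; L) + \GR^{(+)}(\e; K'_\d).
\end{equation*}
Since the last term again goes to $0$ with $\d$, taking $\liminf_{\d\to 0}$ and then $\e\downarrow 0$ produces $\lim_{\e\downarrow 0}\liminf_{\d\to 0}\GB(\eta-\e; L_\d)\le \GB(\eta; L)$, which is the left-hand inequality. The two statements for the liminf-coefficient $\gb$ follow verbatim, using the $\gb$-versions of \eqref{gmadd:eq:K1} recorded in the same display.

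The only delicate point — and the one I would flag as the ``main obstacle,'' though it is really just bookkeeping — is keeping track of which direction of the additive inequality is used for the upper versus the lower bound, and verifying that the hypothesis $\GR(\e; K'_\d)\to 0$ indeed controls both $\GR^{(+)}$ and $\GR^{(-)}$. Apart from this, the proof is a mechanical transcription of the Birman--Solomyak perturbation scheme used in Lemma~\ref{AsLemma2}, stripped of the rescaling device needed there to handle form-compact (but not necessarily compact) perturbations.
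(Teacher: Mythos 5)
Your proof is correct and is exactly the argument the paper intends: apply the additive inequality \eqref{gmadd:eq:K1} to the pair $L_\d$, $K'_\d$ (and to $L$, $-K'_\d$ for the lower direction), then pass to $\limsup$ or $\liminf$ in $\d$ and finally $\e\downarrow 0$, using $\GR^{(\pm)}(\e;K'_\d)\le\GR(\e;K'_\d)\to 0$. The paper's own proof is a one-line version of the same computation, so no further comparison is needed.
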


\begin{proof}For $\d$ fixed, write \eqref{gmadd:eq:K1}
for families $L_\d, K'_\d$
and pass to $\limsup$ as $\d\to 0$ and then
as $\e\to0$. This proves the upper bounds in \eqref{AsLemma:eqM1}.
The lower bounds are proved
similarly.
\end{proof}

The next result is a direct consequence of this lemma applied to compact operators:

\begin{lem}\label{AsLemma}
Let $K=K^{(t)}$ be a family of compact
operators. Suppose that for any $\d>0$ the family $K$
can be represented as a sum $K=K_\d+K'_\d$ such that
for any $\e>0$ the condition \eqref{AsLemma:eq} is satisfied.
Then for any $\l>0$
\begin{equation}\label{AsLemma:eqM}
\begin{split}
   &\lim_{\e\downarrow0}\liminf_{\d\to0}\GR(\l+\e;K_\d)\le
   \GR(\l,K)\le\lim_{\e\downarrow0}\limsup_{\d\to0}\GR(\l-\e;K_\d);\\
&\lim_{\e\downarrow0}\liminf_{\d\to0}\gr(\l+\e;K_\d)\le
   \gr(\l,K)\le\lim_{\e\downarrow0}\limsup_{\d\to0}\gr(\l-\e;K_\d).
   \end{split}
\end{equation}
If, moreover, the families $K,K_\d,K'_\d$ are
self-adjoint, then the relations \eqref{AsLemma:eqM}
hold with $\GR$, $\gr$ replaced respectively by
$\GR^{(\pm)}, \gr^{(\pm)}$.
\end{lem}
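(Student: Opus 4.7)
The plan is to treat the self-adjoint case by a direct reduction to Lemma~\ref{AsLemma1}, and the general (not necessarily self-adjoint) case by applying the singular-value additivity bounds \eqref{gmadd:eq:K} in two directions.

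For the self-adjoint case I would apply Lemma~\ref{AsLemma1} to the semi-bounded (in fact, self-adjoint) family $L^{(t)} = \mp K^{(t)}$. Since $K^{(t)}$ is compact, $\eta_0(\mp K^{(t)}) = 0$, independent of $t$, and the decomposition $\mp K = \mp K_\d + (\mp K'_\d)$ satisfies the hypothesis of that lemma because $\GR(\e; \mp K'_\d) = \GR(\e; K'_\d) \to 0$. Unwinding the definitions $\GR^{(\pm)}(\l; K) = \GB(-\l; \mp K)$ and $\gr^{(\pm)}(\l; K) = \gb(-\l; \mp K)$, the conclusion \eqref{AsLemma:eqM1} of Lemma~\ref{AsLemma1} translates directly into \eqref{AsLemma:eqM} with $\GR, \gr$ replaced by $\GR^{(\pm)}, \gr^{(\pm)}$.

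For the general compact case, I would argue directly from \eqref{gmadd:eq:K}. Applying the first inequality there with $K_1 = K_\d$, $K_2 = K'_\d$ to $K = K_\d + K'_\d$ gives
\begin{equation*}
\GR(\l; K) \le \GR(\l - \e; K_\d) + \GR(\e; K'_\d).
\end{equation*}
Taking $\limsup_{\d\to 0}$ and using \eqref{AsLemma:eq}, then letting $\e \downarrow 0$, yields the upper bound on $\GR(\l; K)$. For the lower bound I would use the reverse decomposition $K_\d = K + (-K'_\d)$; noting that $\GR(\e; -K'_\d) = \GR(\e; K'_\d)$ since $(-K'_\d)^*(-K'_\d) = (K'_\d)^*(K'_\d)$, the same additivity inequality yields
\begin{equation*}
\GR(\l + \e; K_\d) \le \GR(\l; K) + \GR(\e; K'_\d),
\end{equation*}
and taking $\liminf_{\d\to 0}$ followed by $\e \downarrow 0$ gives the lower bound. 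The bounds on $\gr(\l; K)$ follow identically from the second line of \eqref{gmadd:eq:K}, which is asymmetric but has $\GR$ on the perturbation term; hence that term still vanishes in the limit by \eqref{AsLemma:eq}.

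This argument is essentially bookkeeping, and I do not foresee any substantial obstacle. The only subtle point is the asymmetry in the second line of \eqref{gmadd:eq:K}, which forces one to check in each of the four bounds that the compact remainder $K'_\d$ lands in the slot controlled by $\GR$ rather than by $\gr$; in both the upper and lower bounds above this is automatic, so the argument goes through without modification.
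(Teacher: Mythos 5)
Your proposal is correct, and it takes the same route the paper sketches: the self-adjoint version is precisely Lemma~\ref{AsLemma1} applied to $L^{(t)}=\mp K^{(t)}$, since then $\eta_0=0$ and $N(-\l;\mp K^{(t)})=n_\pm(\l;K^{(t)})$. The paper disposes of the whole lemma with the one sentence that it is ``a direct consequence'' of Lemma~\ref{AsLemma1} for compact operators; what you add is the observation that the non-self-adjoint case does not literally reduce to Lemma~\ref{AsLemma1} (because $K^*K$ does not split as $K_\d^*K_\d$ plus a small remainder), and so must be run directly off the singular-value additivity \eqref{gmadd:eq:K} via the two decompositions $K=K_\d+K'_\d$ and $K_\d=K+(-K'_\d)$. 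That is the correct way to fill in the gap the paper leaves implicit, and your handling of the asymmetry in the second line of \eqref{gmadd:eq:K} (making sure $K'_\d$ always lands in the $\GR$ slot) is exactly the point one must check. No issues.
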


\section{Eigenvalue bounds for Toeplitz operators}\label{Bounds:Section}

\subsection{Eigenvalue bounds for auxiliary integral operators}
Here we obtain spectral estimates for integral operators involving
the projections $P_q$ on the spectral subspaces
 (\emph{Landau
subspaces}) $\CL_q$ associated with
the landau Levels $\L_q$, $q = 0, 1, 2, \dots$.
Choosing the gauge $\ba=(-\frac{B}{2}x_2,\frac{B}{2}x_1)$
for the magnetic potential, one can write
the orthonormal basis of the subspace $\CL_q$
using the generalized Laguerre polynomials
\begin{equation*}
L_q^{(\a)}(\xi) = \sum_{m=0}^q \begin{pmatrix} q+\a \\
q - m\end{pmatrix} \frac{(-\xi)^m}{m!}, \ \xi\ge 0,
\end{equation*}
as follows:
\begin{equation}\label{psi:eq}
\psi_{q, \a}(\bx) = \sqrt{\frac{q!}{(q+\a)!}}
\biggl[\sqrt{\frac{B}{2}}(x_1+i x_2)\biggr]^\a
L_q^{(\a)}\biggl(\frac{B|\bx|^2}{2}\biggr)
\sqrt{\frac{B}{2\pi}}\exp{\biggl(-\frac{B|\bx|^2}{4}\biggr)},
\end{equation}
 for $\a=-q,-q+1,\dots.$ The orthonormality follows from the standard relation
\begin{equation}\label{normalise:eq}
\int_0^\infty  \xi^\a e^{-\xi}L_q^{(\a)}(\xi)
L_{q'}^{(\a)}(\xi)d\xi = \frac{\G(\a+q+1)}{q!}\d_{q,
q'}.
\end{equation}
 The integral kernel of the projection $P_q$ is
\begin{equation}\label{projection:eq}
\CP_q(\bx, \by) = \frac{B}{2\pi}
L_q^{(0)}\biggl(\frac{B|\bx-\by|^2}{2}\biggr)
\exp\biggl(-\frac{B}{4}\bigl( |\bx-\by|^2 + 2i \bx
\wedge\by\bigr) \biggr).
\end{equation}
 The following important estimate for the  Laguerre
 polynomials can be found in \cite{RaiWar}.

\begin{lem} Let $k\in\mathbb Z_+$. Then
\begin{equation}\label{ocenka:eq}
|L_k^{(\a)}(\xi)|\le (\a + k)^k e^{\frac{\xi}{\a+k}}
\end{equation}
for all $\xi\ge 0$ and $\a\ge 1-k$.
\end{lem}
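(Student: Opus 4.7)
The natural starting point is the explicit series already recorded in the excerpt,
\begin{equation*}
L_k^{(\alpha)}(\xi) = \sum_{m=0}^k \binom{k+\alpha}{k-m} \frac{(-\xi)^m}{m!}.
\end{equation*}
Applying the triangle inequality term by term yields
\begin{equation*}
|L_k^{(\alpha)}(\xi)| \le \sum_{m=0}^k \left|\binom{k+\alpha}{k-m}\right| \frac{\xi^m}{m!}.
\end{equation*}
On the other hand, since the hypothesis $\alpha\ge 1-k$ forces $\alpha+k\ge 1>0$, the target majorant admits the Taylor expansion
\begin{equation*}
(\alpha+k)^k e^{\xi/(\alpha+k)} = \sum_{m=0}^\infty (\alpha+k)^{k-m} \frac{\xi^m}{m!},
\end{equation*}
all of whose terms are non-negative. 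Therefore it suffices to prove the term-by-term coefficient inequality
\begin{equation*}
\left|\binom{k+\alpha}{k-m}\right| \le (\alpha+k)^{k-m}, \qquad 0 \le m \le k,
\end{equation*}
and then conclude by dropping the (non-negative) tail $m>k$.

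To establish this coefficient inequality I would expand
\begin{equation*}
\binom{k+\alpha}{k-m} = \frac{(\alpha+k)(\alpha+k-1)\cdots(\alpha+m+1)}{(k-m)!},
\end{equation*}
a ratio of a product of $k-m$ factors to $(k-m)!\ge 1$, and bound each factor in absolute value. When $\alpha\ge 0$ every factor lies in $[\alpha+m+1,\alpha+k]\subset[1,\alpha+k]$, so the numerator is at most $(\alpha+k)^{k-m}$, and the division by $(k-m)!\ge 1$ only helps. For the values of $\alpha$ that actually arise in this paper—namely the integers $\alpha=-q,-q+1,\dots$ for which $\alpha+k\ge 1$, i.e.\ $\alpha\ge 1-k$—the only new feature is that a zero may appear in the numerator: if $\alpha=-s$ with $1\le s\le k-1$ and $m\le s-1$, then $\alpha+s=0$ is one of the factors in the product, so the binomial coefficient vanishes identically; and if $m\ge s$ then every factor is a positive integer in $[1,\alpha+k]$ and the elementary bound applies.

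The main—indeed the only—obstacle is the coefficient inequality itself; once it is in place the proof is essentially an exchange of summation with the Taylor expansion above. No analytic machinery beyond this polynomial expansion is required. If one wanted the lemma in the full real range $\alpha\ge 1-k$ rather than the integer subcase, the same strategy applies, but one must argue more carefully factor by factor to show that any negative factors in the numerator still satisfy $|\alpha+j|\le\alpha+k$ in the admissible range; this is a bookkeeping refinement that does not alter the overall plan.
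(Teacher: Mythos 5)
The paper does not prove this lemma at all; it is cited from Raikov--Warzel \cite{RaiWar}, so there is no in-house proof to compare against. Your reduction---term-by-term comparison of the Laguerre series with the Taylor expansion of $(\alpha+k)^k e^{\xi/(\alpha+k)}$, leaving only the coefficient inequality $\bigl|\binom{k+\alpha}{k-m}\bigr|\le(\alpha+k)^{k-m}$---is a clean, self-contained route. For $\alpha\ge 0$, and for integer $\alpha\ge 1-k$ (the only values the paper actually uses, since in the model-operator computation one has $\alpha=j$, a non-negative integer), your argument is correct: either a zero factor kills the binomial, or every factor is a positive integer bounded by $\alpha+k$, and $(k-m)!\ge 1$ finishes it.

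However, the closing sentence about the general real range $\alpha\ge 1-k$ is not a bookkeeping refinement---it is a genuine gap. For non-integer $\alpha\in[1-k,0)$ the negative factors in $\prod_{j=m+1}^{k}(\alpha+j)$ need \emph{not} satisfy $|\alpha+j|\le\alpha+k$: take $k=5$, $\alpha=-3.5$, $m=0$, $j=1$, giving $|\alpha+j|=2.5>\alpha+k=1.5$. Your proposed factor-by-factor bound on the numerator fails, and the inequality survives only because the factorial in the denominator, which you discard with ``$(k-m)!\ge 1$ only helps,'' is in fact indispensable. A correct argument must pair the offending factors against the factorial. For instance, setting $\beta=\alpha+k\ge1$, $n=k-m$, $p=\lfloor\beta\rfloor$: if $n\le p+1$ all factors are non-negative and $\le\beta$; otherwise
\begin{equation*}
\prod_{i=0}^{n-1}|\beta-i|\ \le\ \beta^{\,p+1}\prod_{i=p+1}^{n-1}(i-\beta)\ \le\ \beta^{\,p+1}\,\frac{(n-1)!}{p!},
\end{equation*}
and since $p+1\le n$ and $\beta\ge1$ give $\beta^{\,p+1-n}\le 1\le n\,p!$, one gets $\prod|\beta-i|\le n!\,\beta^{\,n}$, which is the coefficient inequality. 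You should either restrict the lemma's statement to the integer case you actually verify, or replace the last paragraph with an argument of this type.
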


For $t>0$ and any function $f = f(\bx)$ we denote $f^{(t)}(\bx) =
f(t^{-1} \bx)$. We consider the operator families of the form
\begin{equation*}
S^{(t)}(W_1, W_2) = S_q^{(t)}(W_1, W_2) = W_1^{(t)} P_q
\overline{W_2^{(t)}}, t >  0,
\end{equation*}
where $W_1, W_2$ are some complex-valued functions. Along with
$S^{(t)}$ we also consider
\begin{equation*}
T_{q, q'}^{(t)}(V) = P_q V^{(t)} P_{q'},\ T_q^{(t)}(V) = T_{q,
q}^{(t)}(V),
\end{equation*}
with some complex-valued function $V$; these are Toeplitz type
operators for $q'=q$ and Hankel type operators for $q'\ne q$. The
labels $q, q' = 0,1, 2, \dots $ are fixed and as a rule, they are
not reflected in the notation of the operators. The superscript
${}^{(t)}$ is sometimes omitted as well. The functions $W_j,V$ will
be referred to as weight functions. It is convenient to represent
$S^{(t)}(W_1, W_2)$ and $T^{(t)}(V)$ in terms of the operator
$Z^{(t)}(W) = W^{(t)} P_q$, so that
\begin{gather*}
S^{(t)}(W_1, W_2) = Z^{(t)}(W_1) (Z^{(t)}(W_2))^*,\\[0.2cm]
T^{(t)}(V) = (Z^{(t)}(V_1))^*(Z^{(t)}(V_2)), \ V_1 = \sqrt{|V|},\
V_2 = V |V|^{-1/2}.
\end{gather*}
Under mild assumptions on $W_1, W_2, V$ the above operators are
compact.

\begin{lem}\label{trace:lem}
If $W, W_1, W_2\in \plainL2(\R^2)$,  $\;V\in \plainL1(\R^2)$, then
$Z^{(t)}(W)\in\GS_2$, $S^{(t)}(W_1,$$ W_2)$, $T^{(t)}(V)\in\GS_1$,
and
\begin{gather*}
\|Z^{(t)}(W)\|_{\GS_2} = t \sqrt{\frac{B}{2\pi}}\|W\|_2,\\[0.2cm]
\| S^{(t)} (W_1, W_2)\|_{\GS_1}\le t^2 \frac{B}{2\pi}\|W_1\|_2
\|W_2\|_2,\ \ \ \| T^{(t)}(V)\|_{\GS_1}\le t^2
\frac{B}{2\pi}\|V\|_1.
\end{gather*}
\end{lem}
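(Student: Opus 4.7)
The plan is to compute the Hilbert--Schmidt norm of $Z^{(t)}(W)$ directly from its integral kernel and then deduce the trace-class bounds for $S^{(t)}$ and $T^{(t)}$ by factoring them as products of two Hilbert--Schmidt operators and applying the Schatten version of the H\"older inequality.

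First I would observe that $Z^{(t)}(W) = W^{(t)} P_q$ has integral kernel $W^{(t)}(\bx)\CP_q(\bx,\by)$, so
\begin{equation*}
\|Z^{(t)}(W)\|_{\GS_2}^2 = \int_{\R^2}|W^{(t)}(\bx)|^2\left(\int_{\R^2}|\CP_q(\bx,\by)|^2\,d\by\right)d\bx.
\end{equation*}
The inner integral is the diagonal value of the kernel of $P_q^* P_q = P_q$: since $\CP_q$ is self-adjoint ($\CP_q(\by,\bx)=\overline{\CP_q(\bx,\by)}$) and $P_q^2=P_q$,
\begin{equation*}
\int_{\R^2}|\CP_q(\bx,\by)|^2\,d\by = \CP_q(\bx,\bx) = \frac{B}{2\pi}\, L_q^{(0)}(0) = \frac{B}{2\pi},
\end{equation*}
where the last equality uses $L_q^{(0)}(0)=1$ and the explicit formula \eqref{projection:eq} at $\bx=\by$. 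Scaling $\bx\mapsto t\bx$ in the remaining integral yields the claimed identity $\|Z^{(t)}(W)\|_{\GS_2} = t\sqrt{B/(2\pi)}\,\|W\|_2$.

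Next, for $S^{(t)}(W_1,W_2) = Z^{(t)}(W_1)(Z^{(t)}(W_2))^*$, the factors belong to $\GS_2$ by the previous step, and the bound $\|AB^*\|_{\GS_1}\le \|A\|_{\GS_2}\|B\|_{\GS_2}$ gives
\begin{equation*}
\|S^{(t)}(W_1,W_2)\|_{\GS_1}\le t^2\,\frac{B}{2\pi}\,\|W_1\|_2\|W_2\|_2.
\end{equation*}
For $T^{(t)}(V) = (Z^{(t)}(V_1))^* Z^{(t)}(V_2)$ with $V_1=\sqrt{|V|}$ and $V_2 = V|V|^{-1/2}$ (set to $0$ where $V=0$), the identity $V_1 V_2 = V$ verifies the factorization, while $|V_1|^2=|V_2|^2=|V|$ shows that $V_1,V_2\in \plainL2(\R^2)$ with $\|V_j\|_2^2 = \|V\|_1$. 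The same Schatten H\"older inequality then delivers the bound $\|T^{(t)}(V)\|_{\GS_1}\le t^2 (B/2\pi)\|V\|_1$.

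There is no real obstacle here; the only point requiring care is the reproducing-kernel identity for $P_q$, which reduces the $\GS_2$ computation for $Z^{(t)}(W)$ to evaluating $\CP_q(\bx,\bx)$. Once that constant $B/(2\pi)$ is identified, everything else is a straightforward scaling and Schatten factorization argument.
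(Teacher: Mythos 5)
Your argument is correct and follows essentially the same route as the paper: compute the Hilbert--Schmidt norm of $Z^{(t)}(W)$ from its kernel, then deduce the $\GS_1$ bounds for $S^{(t)}$ and $T^{(t)}$ from the factorizations $Z_1 Z_2^*$, $Z_1^* Z_2$ together with the Schatten--H\"older inequality. The only variation is in evaluating the inner integral: you use the reproducing-kernel identity $P_q^2 = P_q$ to get $\int|\CP_q(\bx,\by)|^2\,d\by = \CP_q(\bx,\bx) = B/(2\pi)$ directly, whereas the paper substitutes the explicit kernel formula \eqref{projection:eq} and invokes the Laguerre normalization \eqref{normalise:eq}; both give the same constant, and your shortcut is a clean alternative.
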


\begin{proof}
It suffices to prove the equality for the Hilbert-Schmidt norm of
$Z^{(t)}(W)$. Using \eqref{projection:eq} and \eqref{normalise:eq},
we find
\begin{align*}
\|Z^{(t)}(W)\|_{\GS_2}^2 = &\ \int_{\R^2} \int_{\R^2}
|W(t^{-1}\bx)|^2 |\CP_q (\bx, \by)|^2 d\bx d\by\\[0.2cm]
= &\ \frac{B^2}{(2\pi)^2}\int_{\R^2}\int_{\R^2} |W(t^{-1}\bx)|^2
\biggl(L_q^{(0)}\biggl(\frac{B|\by|^2}{2}\biggr)\biggr)^2
\exp\biggl(-\frac{B}{2}|\by|^2\biggr) d\bx d\by\\[0.2cm]
= &\ t^2 \|W\|_2^2 \frac{B}{2\pi}\int_0^\infty
\bigl(L_q^{(0)}(s)\bigr)^2 e^{-s} ds = t^2 \|W\|_2^2 \frac{B}{2\pi},
\end{align*}
as required.
\end{proof}

Using the notations introduced in Section~\ref{AsCoeff}, we set
\begin{equation}\label{McoeffDef}
\GM(\l; W_1, W_2) = \GR(\l; S^{(t)} (W_1, W_2));\ \gm(\l; W_1, W_2)
= \gr(\l; S^{(t)}(W_1, W_2)\bigr).
\end{equation}
When $W_1 = W_2 = W$, we write $\GM(\l; W)$ and $\gm(\l; W)$.  In
this case the operator $S(W_1, W_2)$ is self-adjoint, so that we can
also define the functionals $\GM^{(\pm)}(W)$. Clearly, $\GM^{(-)}(W)
= 0$ and $\GM^{(+)}(W) = \GM(W)$.

For the operator $T^{(t)} = T^{(t)}_{q, q'}$ we introduce the
related quantities:
\begin{equation*}
\GN(\l; V)  = \GR\bigl(\l; T^{(t)}(V)\bigr),\ \gn(\l; V) =
\gr\bigl(\l; T^{(t)}(V)\bigr),
\end{equation*}
and in case  when $V$ is real-valued and $q = q'$, we introduce the
natural notation $\GN^{(\pm)}(V)$ and $\gn^{(\pm)}(V)$ as well.
Since the nonzero eigenvalues of $S^{(t)}_q(W)$ and
$T^{(t)}_q(|W|^2)$ coincide, we have
\begin{equation}\label{mn:eq}
\GN^{(+)}(\l; |W|^2) = \GM(\l; W), \ \ \gn^{(+)}(\l; |W|^2) =
\gm(\l; W).
\end{equation}
If necessary, we reflect the dependence on  $q, q'$ in the notation
of the above asymptotic coefficients: for instance, we may write
$\gm_{q}(W_1, W_2)$ and $\GN^{(\pm)}_{q}(V)$, $\GN_{q, q'}(V)$.

Now Lemma \ref{trace:lem} leads to the following result.

\begin{lem}\label{base:lem}
If $W, W_1, W_2\in \plainL2(\R^2)$ and $V\in\plainL1(\R^2)$, then
for any $\l >0$ and $q, q' = 0, 1, 2, \dots$, we have
\begin{gather*}
\GR(\l; Z_q^{(t)}(W))\le \dfrac{B}{2\pi \l^2} \|W\|_2^2,\\[0.2cm]
\GM_q(\l; W_1, W_2)\le \dfrac{B}{2\pi\l}\| W_1\|_2 \|W_2\|_2,\ \
\GN_{q, q'}(\l; V)\le \dfrac{B}{2\pi\l}\|V\|_1.
\end{gather*}
\end{lem}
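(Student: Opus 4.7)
The plan is to combine the Schatten-class bounds from Lemma~\ref{trace:lem} with elementary Chebyshev-type inequalities relating the counting function of singular values to Schatten norms, and then pass to the $\limsup$ in $t$.

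First, I recall the two standard Chebyshev inequalities. For any compact operator $K$ in $\GS_2$, ordering its singular values $\s_j(K)$ in decreasing order, one has $\l^2 n(\l;K) \le \sum_{\s_j(K)\ge\l}\s_j(K)^2 \le \|K\|_{\GS_2}^2$, so $n(\l; K)\le \l^{-2}\|K\|_{\GS_2}^2$. Similarly, for $K\in\GS_1$, $\l n(\l; K)\le \|K\|_{\GS_1}$, and the same bound holds for $n_\pm(\l;K)$ when $K$ is self-adjoint, since $n_\pm(\l;K)\le n(\l;K)$.

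Next, I apply each inequality with the Schatten-norm estimate from Lemma~\ref{trace:lem}. For the operator $Z^{(t)}(W)$, the Hilbert-Schmidt identity
\begin{equation*}
\|Z^{(t)}(W)\|_{\GS_2}^2 = t^2\frac{B}{2\pi}\|W\|_2^2
\end{equation*}
gives
\begin{equation*}
t^{-2} n(\l; Z^{(t)}(W)) \le \frac{B}{2\pi\l^2}\|W\|_2^2
\end{equation*}
for every $t>0$, uniformly in $t$, so taking $\limsup_{t\to\infty}$ yields the first bound. For $S^{(t)}(W_1,W_2)$ and $T^{(t)}(V)$, the trace-class estimates from Lemma~\ref{trace:lem} combined with the bound $n(\l;K)\le\l^{-1}\|K\|_{\GS_1}$ give in the same way
\begin{equation*}
t^{-2} n(\l; S^{(t)}(W_1,W_2)) \le \frac{B}{2\pi\l}\|W_1\|_2\|W_2\|_2,\quad t^{-2} n(\l; T^{(t)}(V)) \le \frac{B}{2\pi\l}\|V\|_1,
\end{equation*}
and taking $\limsup$ in $t$ produces the remaining two inequalities (recall that $\GR$ is defined via the singular-value counting function, and it majorises $\GR^{(\pm)}$ automatically).

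There is no real obstacle: the proof is a direct combination of Lemma~\ref{trace:lem} with Chebyshev's inequality, and the uniformity in $t$ of the Schatten-norm bounds makes the passage to $\limsup$ trivial. The only thing to remark on is that the exponent $2$ of $\l$ in the first inequality reflects the Hilbert-Schmidt nature of $Z^{(t)}(W)$, while the exponent $1$ in the other two reflects the trace-class nature of $S^{(t)}$ and $T^{(t)}$, which in turn follows from writing these operators as products of two Hilbert-Schmidt factors $Z^{(t)}$.
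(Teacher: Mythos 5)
Your argument coincides with the paper's: apply the Chebyshev-type inequality $n(\l;T)\le\l^{-p}\|T\|_{\GS_p}^p$ together with the Schatten-norm identities/estimates from Lemma~\ref{trace:lem}, then divide by $t^2$ and pass to $\limsup$. The extra remarks about the $\l$-exponents and the factorization through two Hilbert--Schmidt factors are accurate but implicit in the paper's one-line proof.
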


\begin{proof}
For any operator $T\in \GS_p$ we have $n(\l; T)\le \l^{-p}
\|T\|_{\GS_p}^p$. It remains to use Lemma \ref{trace:lem}.
\end{proof}

\subsection{Localization} In our study
of the eigenvalue behavior, we systematically represent the
operators as block-matrices associated with certain orthogonal
decompositions. The results of this subsection help to show that
off-diagonal terms do not contribute to the asymptotic coefficients.

We consider an auxiliary integral operator $K^{(t)}(W_1, W_2, f)$
having the kernel
\begin{equation*}
\CK^{(t)}_q(\bx, \by; W_1, W_2, f) = W_1^{(t)}(\bx) \CP_q(\bx, \by)
f^{(t)}(\bx-\by)W_2^{(t)}(\by),
\end{equation*}
with some functions $W_1, W_2$ and $f$.

\begin{lem}\label{disjoint:lem}
Let $W_1\in \plainL{p}, W_2\in \plainL{s}, f\in \plainL{\infty}$
with arbitrary $2\le p, s\le \infty$ such that $ \frac{1}{p} +
\frac{1}{s}\ge \frac{1}{2}.$ Suppose that for some $\d >0$
\begin{equation*}
f(\bz) = 0, \ \ \textup{for}\ \ \ |\bz|\le \d.
\end{equation*}
Then for any $\l >0$
\begin{equation}\label{Kzero:eq}
\GR\bigl(\l;\  K^{(t)}(W_1, W_2, f)\bigr) = 0.
\end{equation}
\end{lem}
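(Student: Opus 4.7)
\medskip

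The approach is to exploit the Gaussian decay of the Landau projection kernel. By \eqref{projection:eq}, $|\CP_q(\bx,\by)|$ depends only on $|\bx-\by|$ and is bounded above by $C_q(1+|\bx-\by|)^{N} e^{-B|\bx-\by|^2/4}$ for some $N=N(q)$. Since $f^{(t)}(\bx-\by)$ is supported on $|\bx-\by|\ge t\d$, the kernel of $K^{(t)}$ inherits a super-exponential decay factor of order $e^{-ct^2}$, and I would show this forces $\|K^{(t)}\|_{\GS_2}$ to decay faster than any polynomial in $t$. The statement \eqref{Kzero:eq} then follows at once from $n(\l;K^{(t)})\le \l^{-2}\|K^{(t)}\|_{\GS_2}^2$.

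To organise the estimate, I exploit the translation invariance of $|\CP_q(\bx,\by)|^2$ to write
\begin{equation*}
\|K^{(t)}\|_{\GS_2}^2 = \int\!\!\int |W_1^{(t)}(\bx)|^2 |W_2^{(t)}(\by)|^2 \tilde h^{(t)}(\bx-\by)\, d\bx\, d\by,
\end{equation*}
where $\tilde h^{(t)}(\bz)=|\CP_q(\bz,0)|^2|f^{(t)}(\bz)|^2$. Applying H\"older's inequality with exponent $\a=p/2$ in $\bx$ and then Young's inequality to the inner convolution in $\by$ with exponents $\b=s/2$ and $\g$ determined by $1/\a+1/\b+1/\g = 2$, i.e.\ $1/\g = 2-2/p-2/s$, I obtain
\begin{equation*}
\|K^{(t)}\|_{\GS_2}^2 \le \|W_1^{(t)}\|_p^2\,\|W_2^{(t)}\|_s^2\,\|\tilde h^{(t)}\|_\g.
\end{equation*}
The hypotheses $p,s\ge 2$ and $1/p+1/s\ge 1/2$ are precisely what guarantee $\a,\b,\g\in[1,\infty]$.

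To estimate $\|\tilde h^{(t)}\|_\g$, I use the polynomial bound $|\CP_q(\bz,0)|^2\le C_q (1+|\bz|)^{2N} e^{-B|\bz|^2/2}$, restrict to $|\bz|\ge t\d$ and absorb the polynomial factor into half of the Gaussian, obtaining $|\CP_q(\bz,0)|^2\le C_q' e^{-B|\bz|^2/4}$ pointwise. Extracting a prefactor $e^{-cBt^2\d^2}$ from the remaining exponential (via $|\bz|^2\ge \tfrac12\d^2 + \tfrac12|\bz|^2$) then yields $\|\tilde h^{(t)}\|_\g\le C_q \|f\|_\infty^2 e^{-ct^2}$ for some $c>0$. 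Combining this with the scaling identities $\|W_1^{(t)}\|_p=t^{2/p}\|W_1\|_p$ and $\|W_2^{(t)}\|_s=t^{2/s}\|W_2\|_s$ gives
\begin{equation*}
\|K^{(t)}\|_{\GS_2}^2 \le C\, t^{4/p+4/s}\, e^{-ct^2}\,\|W_1\|_p^2\,\|W_2\|_s^2\,\|f\|_\infty^2,
\end{equation*}
which tends to zero exponentially, so that $t^{-2}n(\l;K^{(t)})\to 0$ and \eqref{Kzero:eq} follows.

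The main point to verify carefully is the Young-type bookkeeping with three factors: the exponent constraint $1/\a+1/\b+1/\g=2$ together with the requirement that each exponent lie in $[1,\infty]$ is satisfied exactly when $p,s\ge 2$ and $1/p+1/s\ge 1/2$, which is the hypothesis of the lemma. The manipulation of the Laguerre polynomial factor---absorbing it into a fraction of the Gaussian so that the remaining decay $e^{-c t^2}$ is uniform in $\bz$---is routine but essential for the bound on $\|\tilde h^{(t)}\|_\g$ to hold simultaneously for all admissible values of $\g$.
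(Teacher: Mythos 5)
Your proof is correct and follows essentially the same route as the paper's: extract the exponential factor $e^{-ct^2}$ from the Gaussian in the Landau projection kernel on the region $|\bx-\by|\ge\delta t$ where $f^{(t)}$ lives, then estimate the Hilbert--Schmidt norm via H\"older and Young. Your bookkeeping with exponents $\alpha=p/2$, $\beta=s/2$, $1/\gamma=2-2/p-2/s$ is just a squared-function reformulation of the paper's single $r$ with $1/p+1/s+1/r=1$ (indeed $\gamma=r/2$), and you helpfully make explicit where the hypotheses $p,s\ge2$ and $1/p+1/s\ge1/2$ enter; the only blemish is a typo where you write $|\bz|^2\ge\tfrac12\delta^2+\tfrac12|\bz|^2$ for $\tfrac12(t\delta)^2+\tfrac12|\bz|^2$, though your stated prefactor $e^{-cBt^2\delta^2}$ is right.
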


\begin{proof} By assumption $\CK^{(t)}(\bx, \by; W_1,W_2, f) = 0$
if $|\bx-\by|\le \d t$. For $|\bx-\by|> \d t$ we use
\eqref{projection:eq}:
\begin{equation*}
|\CP_q(\bx, \by)|\le \frac{B}{2\pi} \biggl|L_q^{(0)}\biggl(
\frac{B|\bx-\by|^2}{2} \biggr)\biggr| e^{-\frac{B}{8}\d^2 t^2}
e^{-\frac{B}{8}  |\bx-\by|^2}.
\end{equation*}
Let $r\in [2, \infty]$ be defined by $p^{-1}+s^{-1}+r^{-1}=1$. By
the H\"older and Young inequalities
\begin{align*}
\| K^{(t)}(W_1, W_2, f)\|_{\GS_2}^2\le
 &\
  t^{\frac{4}{p} + \frac{4}{s}}\| W_1\|_p^2
\|W_2\|_s^2 \|f\|_\infty^2 \frac{B^2}{(2\pi)^2}
e^{-\frac{B}{4} \d^2 t^2}\\
&\ \times\biggl(\int_{\R^2}\biggl|L_q^{(0)}\biggl(
\frac{B|\bx|^2}{2}
\biggr)\biggr|^r e^{-\frac{B r}{4} |\bx|^2}  d\bx\biggr)^{\frac{2}{r}}\\
\le t^{\frac{4}{p} + \frac{4}{s}}& \| W_1\|_p^2 \|W_2\|_s^2
\|f\|_\infty^2 \biggl(\frac{B}{2\pi}\biggr)^{2-\frac{2}{r}}
e^{-\frac{B}{4} \d^2 t^2} \biggl(\int_0^\infty |L_q^{(0)}(\xi)|^r
e^{- r\xi/2}
d\xi\biggr)^{\frac{2}{r}}\\[0.2cm]
 \le &\  C t^{\frac{4}{p} + \frac{4}{s}} e^{-\frac{B}{4} \d^2
t^2}.
\end{align*}
Due to the presence of the exponentially decaying factor, for
sufficiently large $t$  the operator $K^{(t)}(W_1,W_2, f)$ has no
singular values above $\l$, whence \eqref{Kzero:eq}.
\end{proof}

The above Lemma has a few useful corollaries.

\begin{cor}\label{disjoint2:cor}
Let $V\in\plainL2(\R^2)$ and $R^{(t)}(V) = R_q^{(t)}(V) =
[P_q,V^{(t)}]$. Then
\begin{equation}\label{Nzero1:eq}
    \GR(\l, R^{(t)}(V))=0.
\end{equation}

Moreover, if $V\in \plainL1(\R^2) + \plainL2(\R^2)$ and $q\not = q'$
then also
\begin{equation}\label{Nzero:eq}
\GN_{q, q'}(\l; V) = 0,
\end{equation}
for all $\l >0$.
\end{cor}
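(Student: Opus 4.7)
The plan is to prove both \eqref{Nzero1:eq} and \eqref{Nzero:eq} by reducing to the case of smooth compactly supported $V$, where a direct Hilbert--Schmidt calculation yields the decisive gain. I first treat \eqref{Nzero1:eq} for $V \in \co(\R^2)$. The kernel of $R^{(t)}(V) = [P_q, V^{(t)}]$ is $\CP_q(\bx,\by)\bigl(V^{(t)}(\by) - V^{(t)}(\bx)\bigr)$, and it vanishes unless $\bx \in \supp V^{(t)}$ or $\by \in \supp V^{(t)}$. Combining the mean value estimate $|V^{(t)}(\bx) - V^{(t)}(\by)| \le t^{-1}\|\nabla V\|_\infty |\bx - \by|$ with the translation invariance of $|\CP_q(\bx, \by)|$ in $\bx - \by$ (visible from \eqref{projection:eq}), I obtain
\begin{equation*}
\|R^{(t)}(V)\|_{\GS_2}^2 \le 2 t^{-2}\|\nabla V\|_\infty^2 \, |\supp V^{(t)}| \int_{\R^2} |\bz|^2 |\CP_q(0, \bz)|^2 d\bz.
\end{equation*}
The last integral converges (the square of a Laguerre polynomial against a Gaussian in $|\bz|^2$), and $|\supp V^{(t)}| = t^2 |\supp V|$, so the factor $t^{-2} \cdot t^2$ collapses to $1$ and the bound is independent of $t$. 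Consequently $n(\l; R^{(t)}(V)) = O(1)$, whence $\GR(\l; R^{(t)}(V)) = 0$.

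To extend to general $V \in \plainL2(\R^2)$, I apply Lemma \ref{AsLemma}. For $V_\d \in \co(\R^2)$ with $\|V - V_\d\|_2 \to 0$, decompose $R^{(t)}(V) = R^{(t)}(V_\d) + R^{(t)}(V - V_\d)$. The Hilbert--Schmidt estimate $\|R^{(t)}(W)\|_{\GS_2} \le 2t\sqrt{B/(2\pi)}\|W\|_2$ (triangle inequality applied to the two terms of the commutator, each estimated by Lemma \ref{trace:lem}) yields $\GR(\e; R^{(t)}(V - V_\d)) \le C \|V - V_\d\|_2^2 / \e^2 \to 0$, fulfilling the hypothesis of Lemma \ref{AsLemma}. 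Together with the first step applied to $V_\d$, this proves \eqref{Nzero1:eq}. To obtain \eqref{Nzero:eq} for $q \ne q'$ and $V \in \plainL2$, note that $P_q P_{q'} = 0$ forces $T_{q,q'}^{(t)}(V) = [P_q, V^{(t)}] P_{q'}$, so the singular values of $T_{q,q'}^{(t)}(V)$ are majorized by those of $R^{(t)}(V)$. Finally, for $V \in \plainL1 + \plainL2$, I write $V = V_1 + V_2$ with $V_1 \in \plainL1,\; V_2 \in \plainL2$, approximate $V_1$ in $\plainL1$-norm by $\co$ functions, and use the trace-class bound $\GN_{q, q'}(\l; \cdot) \le C \|\cdot\|_1/\l$ from Lemma \ref{base:lem} to control the $\plainL1$ remainder; a final application of Lemma \ref{AsLemma} completes the argument.

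The main obstacle is securing the step-one uniform-in-$t$ Hilbert--Schmidt bound. A naive estimate, ignoring either the commutator structure or the support constraint, only yields $\|R^{(t)}(V)\|_{\GS_2}^2 \sim t^2$, giving merely $\GR(\l; R^{(t)}(V)) \le C \l^{-2}$: finite but not zero. The crucial point is that the Lipschitz factor $t^{-1}$ arising from differencing $V^{(t)}$ exactly compensates the $t^2$ growth of $|\supp V^{(t)}|$; this precise cancellation expresses the improved concentration of the commutator kernel near the diagonal and underlies both assertions of the corollary.
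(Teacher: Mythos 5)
Your proof is correct, and its core argument for the smooth case is genuinely different from — and arguably cleaner than — the paper's. The paper splits the kernel of $[\tilde V^{(t)}, P_q]$ into a near-diagonal piece (where the Lipschitz bound on $V^{(t)}$ restricted to $|\bx-\by|\le \e t$ gives an operator-norm bound $\le C'\e$ via the Schur test on $\CP_q$) and a far-from-diagonal piece handled by the auxiliary localization Lemma~\ref{disjoint:lem}, which exploits the Gaussian decay of $\CP_q$ at distances $\gtrsim \d t$. You avoid the split entirely: a single Hilbert--Schmidt computation, applying the global Lipschitz estimate together with the translation invariance of $|\CP_q|$ and the support measure, gives $\|R^{(t)}(V)\|_{\GS_2}^2 \le C_V$ uniformly in $t$ for $V\in\co$. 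The cancellation you highlight — the $t^{-2}$ from the Lipschitz constant of $V^{(t)}$ against the $t^2$ from $|\supp V^{(t)}|$ — is exactly the mechanism, and it yields the stronger quantitative conclusion $n(\l; R^{(t)}(V)) = O(1)$ (not just $o(t^2)$) in the smooth case, without invoking Lemma~\ref{disjoint:lem}. The density step to general $V\in\plainL2$, the identity $T^{(t)}_{q,q'}(V) = R^{(t)}_q(V)P_{q'}$ for $q\neq q'$, and the $\plainL1+\plainL2$ decomposition with the trace-class bound from Lemma~\ref{base:lem} all match the paper's treatment. One caveat on exposition: in the $\plainL1$ step make sure it is clear that the approximant $\tilde V_1 \in \co$ is handled via the already-established $\plainL2$ case, while Lemma~\ref{base:lem} controls only the $\plainL1$ remainder — but this is exactly what you do.
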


\begin{proof}
Fix a $\d >0$ and find $\tilde V\in \plainC\infty_0(\R^2)$ such that
$\|V-\tilde V\|_2< \d$. We have
\begin{equation}\label{Nzero2:eq}
[P_q,V^{(t)}]=[P_q,\tilde V^{(t)}]+P_q(V^{(t)} - \tilde
V^{(t)})-(V^{(t)} - \tilde V^{(t)})P_q .\end{equation}
 To two last
terms in\eqref{Nzero2:eq} we can apply Lemma~\ref{base:lem}, which
gives
\begin{equation}\label{Nzero3:eq}
\GR(\e;P(V^{(t)}-\tilde V^{(t)})-(V^{(t)}-\tilde V^{(t)})P)\le
C\d^2/\e^2,
\end{equation}
for any $\e>0$. Since $\d$ is arbitrarily small, it suffices to
prove that $\GR(\l; [\tilde V^{(t)},P])=0$ for any $\l>0$ and
$\tilde V\in \plainC\infty_0(\R^2)$, and then  apply
Lemma~\ref{AsLemma}.
 The integral kernel of $[\tilde V^{(t)},P_q]$ is
\begin{equation*}
\bigl(\tilde V(t^{-1}\bx) - \tilde V(t^{-1}\by)\bigr)\CP_q(\bx,
\by).
\end{equation*}
Denoting $ f(\bz) = \chi(|\bz|\le  \e), $ we rewrite this kernel as
follows:
\begin{align}
\bigl(\tilde V(t^{-1}\bx) - \tilde V(t^{-1}\by)\bigr)&\ \CP_q(\bx,
\by)
f^{(t)}(\bx-\by)\label{split:eq}\\[0.2cm]
+ &\ \CK^{(t)}(\bx; \by; \tilde V, 1, 1-f) -  \CK^{(t)}(\bx; \by; 1,
\tilde V, 1-f).\notag
\end{align}
The operators, corresponding to the last two terms satisfy
\eqref{Kzero:eq}. For the first term in \eqref{split:eq}, we use
that $\tilde V$ has a compact support and so
\begin{equation*}
\bigl|\tilde V(t^{-1}\bx) - \tilde V(t^{-1}\by)\bigr|
f^{(t)}(\bx-\by) \le  t^{-1}\max_{\bz}|\nabla \tilde V(\bz)|\
|\bx-\by|\le C\e.
\end{equation*}
Thus the norm of the operator corresponding to the first term in
\eqref{split:eq} is bounded by
\begin{equation*}
C\e \max_{\bx }\int_{\R^2} |\CP_q(\bx, \by)| d\by\le C' \e,
\end{equation*}
and hence it can be made arbitrarily small, which proves
\eqref{Nzero1:eq}.

If $V\in \plainL2(\R^2)$, then \eqref{Nzero:eq} follows immediately
from \eqref{Nzero1:eq} in view of the identity $T_{q, q'}^{(t)}(V) =
R_q^{(t)}(V) P_{q'}$. If $V\in\plainL1(\R^2)$, then for arbitrary
$\d>0$ we approximate $V$ with a function $\tilde
V\in\plainC\infty_0(\R^2)$, such that $\|V-\tilde V\|_1 < \d$, and
use again Lemmas \ref{base:lem}, \ref{AsLemma} and the formula
\eqref{Nzero1:eq}.
\end{proof}

\begin{cor}\label{disjoint3:cor}
 Let $W_1\in\plainL2(\R^2)$ and $W_2\in\plainL\infty(\R^2)$ be such that
$W_1 W_2 = 0$. Then $\GM(\l; W_1, W_2) = 0$ for all $\l >0$.
\end{cor}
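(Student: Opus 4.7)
The plan is to bound the Hilbert-Schmidt norm $\|S^{(t)}(W_1,W_2)\|_{\GS_2}$ directly and show that it grows strictly slower than $t$ as $t\to\infty$. Since $n(\l;K)\le \l^{-2}\|K\|_{\GS_2}^2$ for any compact operator, this will force $t^{-2}n(\l;S^{(t)})\to 0$ and hence $\GM(\l;W_1,W_2)=\GR(\l;S^{(t)}(W_1,W_2))=0$ for every $\l>0$.

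To compute the Hilbert-Schmidt norm, I use the explicit kernel \eqref{projection:eq} and the fact that $|\CP_q(\bx,\by)|^2$ depends only on $|\bx-\by|$. Writing $|\CP_q(\bx,\by)|^2 = g_q(|\bx-\by|)$ with $g_q(s)=(B/(2\pi))^2|L_q^{(0)}(Bs^2/2)|^2 e^{-Bs^2/2}$, and changing variables $\bx=t\bx'$, $\bs=t(\bx'-\by')$, I get
\begin{equation*}
\|S^{(t)}(W_1,W_2)\|_{\GS_2}^2 = t^2 \int_{\R^2} g_q(|\bs|)\, F_t(\bs)\, d\bs,
\quad F_t(\bs):=\int_{\R^2}|W_1(\bx')|^2 |W_2(\bx'-\bs/t)|^2\, d\bx'.
\end{equation*}
Note $\int g_q<\infty$ (it equals $B/(2\pi)$ by \eqref{normalise:eq} with $\a=0$), and $F_t(\bs)\le \|W_1\|_2^2\|W_2\|_\infty^2$ uniformly in $\bs,t$.

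The key step is showing $F_t(\bs)\to 0$ as $t\to\infty$ for each fixed $\bs$. After the substitution $\by=\bx'-\bs/t$, $F_t(\bs)=\int |W_1(\by+\bs/t)|^2 |W_2(\by)|^2 d\by$. Since $|W_1|^2\in\plainL1(\R^2)$, continuity of translation in $L^1$ gives $\|\,|W_1(\cdot+\bs/t)|^2-|W_1|^2\,\|_1\to 0$, so using $|W_2|^2\in\plainL\infty$ we conclude $F_t(\bs)\to \int |W_1|^2|W_2|^2\,d\by$. The assumption $W_1 W_2=0$ a.e.\ makes this limit zero.

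Finally, dominated convergence (with dominant $g_q(|\bs|)\|W_1\|_2^2\|W_2\|_\infty^2$, which is integrable) gives $t^{-2}\|S^{(t)}\|_{\GS_2}^2\to 0$, and the Chebyshev-type Schatten-2 inequality completes the proof. The only subtle point is the dominated convergence step, specifically the fact that $|W_2|^2\in\plainL\infty$ rather than in an $L^p$ with $p<\infty$; this is handled precisely by shifting the translation onto $|W_1|^2$ (which is in $L^1$) via the change of variables above, so no continuity-of-translation is required for $W_2$.
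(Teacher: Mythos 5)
Your proof is correct, and it takes a genuinely different route from the paper. The paper deduces the corollary from Corollary~\ref{disjoint2:cor}: since $W_1W_2=0$, one has the identity $S^{(t)}(W_1,W_2)=-R^{(t)}(W_1)\,\overline{W_2^{(t)}}$, and then $\GM(\l;W_1,W_2)\le \GR(\l\|W_2\|_\infty^{-1};R^{(t)}(W_1))=0$, where $\GR(\cdot;R^{(t)}(W_1))=0$ was established via the exponential-decay estimate of Lemma~\ref{disjoint:lem} combined with a $\plainC\infty_0$ approximation. Your argument instead computes $\|S^{(t)}(W_1,W_2)\|_{\GS_2}^2$ directly: the change of variables correctly yields $\|S^{(t)}\|_{\GS_2}^2=t^2\int g_q(\bs)F_t(\bs)\,d\bs$ with $\int g_q=B/(2\pi)$, and the trick of shifting the small translation onto $|W_1|^2\in\plainL1$ (so that continuity of translation applies where you have integrability, not boundedness) together with dominated convergence gives $t^{-2}\|S^{(t)}\|_{\GS_2}^2\to0$, whence $\GR(\l;S^{(t)})=0$ by $n(\l;K)\le\l^{-2}\|K\|_{\GS_2}^2$. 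Your proof is self-contained and more elementary, avoiding the commutator machinery entirely; the trade-off is that the paper's route reuses Corollary~\ref{disjoint2:cor} and Lemma~\ref{disjoint:lem}, which are needed independently elsewhere (e.g.\ for $\GN_{q,q'}=0$, $q\ne q'$), and the exponential-decay lemma gives a much stronger quantitative bound than the mere $o(t^2)$ your dominated-convergence argument produces, though the latter is all that is required here.
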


\begin{proof}
Rewrite: $S^{(t)}(W_1, W_2) = - R^{(t)}(W_1) W_2^{(t)}$.
Consequently,
\begin{equation*}
\GM(\l; W_1, W_2)\le \GR(\l \|W_2\|_{\infty}^{-1}; R^{(t)}(W_1) ) =
0,
\end{equation*}
by Corollary \ref{disjoint2:cor}.
\end{proof}

\section{Eigenvalues of Toeplitz operators}\label{Toeplitz:Section}

\subsection{Additivity of asymptotic coefficients}\label{add.subsect}
 Further on, we  will approximate  weight functions by piece-wise
constant ones. To describe these approximations we cut
the plane $\R^2$ in the following way. For fixed
 $N\in \mathbb N$, $d >0$ we tile $\R^2$ by  disjoint annular sectors
\begin{equation*}
\Om_{m, l} = \biggl \{\bx=(\rho,\t) : (m-1)d< \rho \le
md , \frac{2\pi}{N}(l-1) < \t \le
\frac{2\pi}{N}l\biggr\},\ m\in\mathbb N, \ l = 1, 2,
\dots, N.
\end{equation*}
For any set $\Om$ we denote by $\chi(\bx\in \Om)$ its
characteristic function.
Let $X_{m, l} = \chi(\bx\in\Om_{m, l})$.

\begin{lem}\label{tiles:lem}
If $(m, l)\not = (m', l')$, then $\GM(\l; X_{m, l},
X_{m', l'}) = 0$ for all $\l >0$.
\end{lem}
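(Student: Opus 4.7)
The key observation is that the annular sectors $\{\Om_{m,l}\}$ form a disjoint tiling of $\R^2$ by construction, so whenever $(m,l)\neq (m',l')$ we have $\Om_{m,l}\cap\Om_{m',l'}=\emptyset$, and therefore the pointwise product $X_{m,l}\, X_{m',l'}\equiv 0$. Moreover, each $\Om_{m,l}$ has finite area, so $X_{m,l}\in \plainL{2}(\R^2)\cap \plainL{\infty}(\R^2)$.

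The plan is therefore to invoke Corollary~\ref{disjoint3:cor} directly. Taking $W_1 = X_{m,l}\in \plainL{2}(\R^2)$ and $W_2 = X_{m',l'}\in \plainL{\infty}(\R^2)$, the hypothesis $W_1 W_2 = 0$ holds, so the corollary yields
\begin{equation*}
\GM(\l;\, X_{m,l},\, X_{m',l'}) = 0
\end{equation*}
for every $\l>0$. There is no real obstacle here: the only thing to check is the integrability of $X_{m,l}$, which is immediate from the boundedness of each sector in the radial direction ($\rho\leq md$), and the disjointness, which is built into the definition of the tiling. The substantive content of the lemma, namely the vanishing contribution from the off-diagonal products, is already embedded in Corollary~\ref{disjoint3:cor}, which in turn reduces via $S^{(t)}(W_1,W_2)=-R^{(t)}(W_1)W_2^{(t)}$ to the commutator estimate of Corollary~\ref{disjoint2:cor}.
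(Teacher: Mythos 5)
Your proof is correct and matches the paper's own argument, which simply states that the lemma follows immediately from Corollary~\ref{disjoint3:cor}. You have merely spelled out the (straightforward) verification that the hypotheses of that corollary hold, namely $X_{m,l}\in\plainL2\cap\plainL\infty$ and $X_{m,l}X_{m',l'}=0$ from disjointness of the tiles.
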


\begin{proof} Immediately follows from
Corollary \ref{disjoint3:cor}.
\end{proof}

This result leads to the additivity of the asymptotic
coefficients for piece-wise constant functions of the
form
\begin{equation}\label{stepweight}
    W = \sum_{m, l} w_{m, l} X_{m, l},
\end{equation}
where $w_{m, l}$ are some complex numbers and the sum
is finite.

\begin{lem}\label{additivity.lem}
Let $W$ have the form \eqref{stepweight}. Then
\begin{equation}\label{add}
\sum \gm(\l+0;w_{m, l} X_{m, l})
\le \gm(\l; W)\le \GM(\l; W)\le \sum \GM(\l-0;w_{m, l} X_{m, l}),
\end{equation}
\end{lem}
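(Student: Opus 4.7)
The plan is to decompose $S^{(t)}(W) = W^{(t)} P_q \overline{W^{(t)}}$ into a diagonal and an off-diagonal part with respect to the tiling $\{\Om_{m,l}\}$, show the off-diagonal part is asymptotically invisible, and exploit the pairwise disjointness of the tile supports to obtain exact additivity for the diagonal part.

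Substituting $W = \sum w_{m,l} X_{m,l}$ into $S^{(t)}(W,W)$ gives
\begin{equation*}
S^{(t)}(W) = K^{(t)}_\textup{d} + K^{(t)}_\textup{o},
\end{equation*}
with
\begin{equation*}
K^{(t)}_\textup{d} = \sum_{m,l} S^{(t)}(w_{m,l} X_{m,l}), \qquad K^{(t)}_\textup{o} = \sum_{(m,l)\ne (m',l')} w_{m,l}\,\overline{w_{m',l'}}\, S^{(t)}(X_{m,l},X_{m',l'}).
\end{equation*}
\lemref{tiles:lem} asserts $\GM(\l;X_{m,l},X_{m',l'})=0$ for every off-diagonal pair and every $\l>0$. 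Because the sum defining $K_\textup{o}^{(t)}$ has only finitely many terms, iterated application of the subadditivity bound \eqref{gmadd:eq:K} (together with the scaling $\GR(\l;cK)=\GR(\l/|c|;K)$ for a scalar $c$) yields $\GR(\l;K_\textup{o})=\gr(\l;K_\textup{o})=0$ for every $\l>0$.

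The crucial structural observation is that the supports $\Om_{m,l}^{(t)} = t\,\Om_{m,l}$ are pairwise disjoint, so that with respect to the orthogonal decomposition $L^2(\R^2) = \bigoplus_{m,l} L^2(\Om_{m,l}^{(t)})$ the operator $K^{(t)}_\textup{d}$ is block-diagonal: each block is the compression of a single summand $S^{(t)}(w_{m,l}X_{m,l})$ to its invariant subspace $L^2(\Om_{m,l}^{(t)})$, while off-block actions vanish. Consequently
\begin{equation*}
n_+(\l;K^{(t)}_\textup{d}) = \sum_{m,l} n_+(\l;S^{(t)}(w_{m,l}X_{m,l})).
\end{equation*}
Dividing by $t^2$ and passing to $\limsup$ and $\liminf$ (using the finiteness of the sum) produces $\GR(\l;K_\textup{d}) \le \sum \GM(\l;w_{m,l}X_{m,l})$ and $\gr(\l;K_\textup{d}) \ge \sum \gm(\l;w_{m,l}X_{m,l})$.

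Finally I apply \lemref{AsLemma} to the decomposition $S^{(t)}(W) = K_\textup{d} + K_\textup{o}$, taking $K_\d := K_\textup{d}$ and $K'_\d := K_\textup{o}$, both independent of $\d$; hypothesis \eqref{AsLemma:eq} holds trivially since $\GR(\e;K_\textup{o}) = 0$ for every $\e>0$. The resulting inequalities
\begin{equation*}
\gm(\l;W) \ge \lim_{\e\downarrow 0} \gr(\l+\e;K_\textup{d}), \qquad \GM(\l;W) \le \lim_{\e\downarrow 0} \GR(\l-\e;K_\textup{d}),
\end{equation*}
combined with the additivity bounds above and the fact that $\lim_{\e\downarrow 0}$ commutes with the finite sum, yield exactly $\sum \gm(\l+0;w_{m,l}X_{m,l}) \le \gm(\l;W)$ and $\GM(\l;W) \le \sum \GM(\l-0;w_{m,l}X_{m,l})$; the middle inequality $\gm(\l;W) \le \GM(\l;W)$ is definitional. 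The only real obstacle is bookkeeping the one-sided limits — which is precisely why the left and right limits $\GM(\l-0;\cdot)$ and $\gm(\l+0;\cdot)$ appear in the statement — since the analytic content, that cross-terms produce no asymptotic contribution, is supplied entirely by \lemref{tiles:lem}.
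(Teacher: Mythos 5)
Your proof is correct and follows essentially the same route as the paper: you split $S^{(t)}(W)$ into the same diagonal part $K_\textup{d}=\sum_{m,l}|w_{m,l}|^2 S^{(t)}(X_{m,l})$ and off-diagonal part $K_\textup{o}$, kill $K_\textup{o}$ via \lemref{tiles:lem} together with finite subadditivity, exploit the orthogonal (direct-sum) structure of $K_\textup{d}$ over the disjoint tiles $\Om_{m,l}^{(t)}$ to get exact additivity of the counting functions, and finish with \lemref{AsLemma}. The only cosmetic difference is that you spell out the block-diagonal structure and the $\gr$ side explicitly, whereas the paper states the $\GR$ bound and remarks that the lower bound is proved by obvious changes.
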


\begin{proof}
We prove the upper bound only, the lower bound is established in the
same way, with obvious changes.
Represent the operator $S^{(t)}(W)$ as
\begin{align} \label{add.1}
S^{(t)}(W) = &\ \sum_{m,l}|w_{m,l}|^2 S^{(t)}(X_{m,l})
+\sum_{(m,l)\ne(m',l')}w_{m,l}\overline{w_{m',l'}}
S^{(t)}(X_{m,l},X_{m',l'})\notag\\[0.2cm]
= &\ S'+S''.
\end{align}
The family $S''$ in \eqref{add.1} is a finite sum of
operators of the form considered in
Lemma~\ref{tiles:lem}, therefore $\GR(\e, S'')=0$ for
any $\e>0$. Further, the operator $S'$ is a direct sum
of the operators $|w_{m,l}|^2 S^{(t)}(X_{m,l})$, therefore
its spectrum is the union of spectra of summands, so
\begin{equation}\label{add.3}
\GR(\l,S')\le \sum\GR(\l, |w_{m,l}|^2 S^{(t)}(X_{m,l})).
\end{equation}
Now we can apply Lemma~\ref{AsLemma}.
\end{proof}

Let us establish a similar additivity property for the
operator $T(V)$ with a real-valued function $V$ of the form
\begin{equation}\label{stepweight1}
V = \sum_{m, l} v_{m, l} X_{m, l},
\end{equation}
where $v_{m, l}$ are real and the sum is finite.

\begin{lem}\label{additivity1:lem}
Let $V$ be of the form \eqref{stepweight1}.
Then
\begin{equation}\label{addV}
\begin{split}
\sum_{\pm v_{m, l}>0} \gn^{(+)}(\l+0; \pm v_{m, l} X_{m, l})& \
\le\gn^{(\pm)}(\l; V) \\[0.2cm]
\le \GN^{(\pm)}(\l; V)\le &\
\sum_{\pm v_{m, l}>0} \GN^{(+)}(\l-0; \pm v_{m, l} X_{m, l}).
\end{split}
\end{equation}

\end{lem}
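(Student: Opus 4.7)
The plan is to mimic Lemma~\ref{additivity.lem} by introducing a ``block-diagonal'' auxiliary operator
\[
\tilde T^{(t)} = \sum_{m,l} v_{m,l}\, X_{m,l}^{(t)} P_q X_{m,l}^{(t)} = \sum_{m,l} v_{m,l}\, S^{(t)}(X_{m,l}),
\]
which, unlike $T^{(t)}(V) = P_q V^{(t)} P_q$, is literally a direct sum with respect to the orthogonal decomposition $\plainL2(\R^2)=\bigoplus_{m,l}\plainL2(\Omega_{m,l}^{(t)})$, since each $S^{(t)}(X_{m,l})$ sends everything into $\plainL2(\Omega_{m,l}^{(t)})$ and annihilates its orthogonal complement. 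First I would record that, because the non-zero spectra of $S^{(t)}(X_{m,l})$ and $T^{(t)}(X_{m,l})$ coincide (the $AB\leftrightarrow BA$ identity), one has the exact equality
\[
n_\pm(\lambda;\tilde T^{(t)}) = \sum_{\pm v_{m,l}>0} n_+\bigl(\lambda;\, T^{(t)}(\pm v_{m,l} X_{m,l})\bigr),
\]
whence, after dividing by $t^2$ and exploiting that finite sums of non-negative terms interact favourably with $\liminf$ and $\limsup$,
\[
\sum_{\pm v_{m,l}>0}\gn^{(+)}(\lambda;\pm v_{m,l}X_{m,l})\le \gr^{(\pm)}(\lambda;\tilde T^{(t)}),\quad \GR^{(\pm)}(\lambda;\tilde T^{(t)})\le \sum_{\pm v_{m,l}>0}\GN^{(+)}(\lambda;\pm v_{m,l}X_{m,l}).
\]

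The second step would verify that $\GR(\varepsilon;T^{(t)}(V)-\tilde T^{(t)})=0$ for every $\varepsilon>0$. Using $(X_{m,l}^{(t)})^2=X_{m,l}^{(t)}$, an elementary algebraic computation yields the operator identity
\[
P_q X_{m,l}^{(t)} P_q - X_{m,l}^{(t)} P_q X_{m,l}^{(t)} = [P_q,X_{m,l}^{(t)}]\,P_q - X_{m,l}^{(t)}\,[P_q,X_{m,l}^{(t)}],
\]
and summation against $v_{m,l}$ gives
\[
T^{(t)}(V)-\tilde T^{(t)} = R^{(t)}(V)\,P_q - \sum_{m,l} v_{m,l}\, X_{m,l}^{(t)}\, R^{(t)}(X_{m,l}).
\]
By Corollary~\ref{disjoint2:cor} both $R^{(t)}(V)$ and $R^{(t)}(X_{m,l})$ satisfy $\GR(\varepsilon;\cdot)=0$ for all $\varepsilon>0$ (each of $V$ and the $X_{m,l}$ is bounded with compact support, hence lies in $\plainL2(\R^2)$); since multiplication by the contractions $P_q$ and $X_{m,l}^{(t)}$ controls singular values pointwise, and the finite sum is absorbed via subadditivity~\eqref{gmadd:eq:K}, the claim will follow.

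The final step applies Lemma~\ref{AsLemma} (self-adjoint version) to $K^{(t)}=T^{(t)}(V)$ with the $\delta$-independent decomposition $K^{(t)}=\tilde T^{(t)}+(T^{(t)}(V)-\tilde T^{(t)})$. This produces
\[
\gr^{(\pm)}(\lambda+0;\tilde T^{(t)})\le \gn^{(\pm)}(\lambda; V),\qquad \GN^{(\pm)}(\lambda;V)\le \GR^{(\pm)}(\lambda-0;\tilde T^{(t)}),
\]
and chaining with the first-step bounds (noting that the finite sum commutes with the one-sided limits $\lambda\pm 0$ by monotonicity) delivers~\eqref{addV}. The main technical point to get right is that the individual summands $[P_q,V^{(t)}]P_q$ and $X_{m,l}^{(t)}[P_q,X_{m,l}^{(t)}]$ are \emph{not} self-adjoint---only their combination is---so the error-smallness must be phrased through the unsigned singular-value functional $\GR$, which is precisely the setting of Corollary~\ref{disjoint2:cor}.
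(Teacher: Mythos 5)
Your proof is correct, and it takes a genuinely different (and arguably cleaner) route to the same decomposition idea. The paper's proof inserts the resolution of the identity $\sum X_{m,l}$ on \emph{both} sides of $T^{(t)}(V)$, yielding the diagonal block $T_{m,l}=X_{m,l}\,T^{(t)}(v_{m,l}X_{m,l})\,X_{m,l}$; it then kills the off-diagonal terms with Corollary~\ref{disjoint3:cor} (disjointness of weights), and must run Lemma~\ref{AsLemma} a \emph{second} time, again via Corollary~\ref{disjoint3:cor}, to pass from $T_{m,l}$ to $T^{(t)}(v_{m,l}X_{m,l})$. Your auxiliary operator $\tilde T^{(t)}=\sum v_{m,l}\,S^{(t)}(X_{m,l})$ is a different block-diagonal approximant whose blocks have \emph{exactly} the same nonzero spectrum as the target Toeplitz operators by the $AB$/$BA$ identity (using $X_{m,l}^2=X_{m,l}$), so only one application of Lemma~\ref{AsLemma} is needed; correspondingly, your error term is analyzed through Corollary~\ref{disjoint2:cor} (vanishing of the commutator family $R^{(t)}$) rather than Corollary~\ref{disjoint3:cor}. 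The commutator identity
\[
P_q X_{m,l}^{(t)} P_q - X_{m,l}^{(t)} P_q X_{m,l}^{(t)} = [P_q,X_{m,l}^{(t)}]\,P_q - X_{m,l}^{(t)}\,[P_q,X_{m,l}^{(t)}]
\]
is correct, the contraction argument and subadditivity of $\GR$ handle the individual (non-self-adjoint) summands, and the chaining with the one-sided limits is legitimate because each $\gn^{(+)},\GN^{(+)}$ is monotone in $\lambda$. Your approach buys a shorter argument with a single invocation of the perturbation lemma; the paper's version is slightly more modular in that it reuses Corollary~\ref{disjoint3:cor} verbatim and avoids the explicit commutator algebra.
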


\begin{proof} As in the previous lemma, we prove only the upper bound.
Let $\Om = \cup \Om_{m, l}$ be the set where $V(\bx)\not = 0$, and let
$\Om_{0, 0} = \R^2\setminus \Om$. It is convenient to include the set $\Om_{0, 0}$ in the
family of $\Om_{m, l}$'s.
Rewrite:
\begin{equation*}
T^{(t)}(V) = \sum  X_{m, l}  T^{(t)}(v_{n, s}X_{n,s}) X_{m', l'} = T' + T'',
\end{equation*}
where
\begin{equation*}
T' = \sum  T_{m, l},\ \ T_{m, l} = X_{m, l} T^{(t)}(v_{m, l}X_{m,l}) X_{m, l},
\end{equation*}
and $T''$ is the sum in which at least one of the pairs
$(m, l), (m', l')$ is distinct from $(n, s)$. Consider, for instance the term with
$(m, l)\not = (n, s)$, and rewrite it as follows:
\begin{equation*}
X_{m, l}  T^{(t)}(v_{n, s})X_{m', l'} = v_{n, s} S^{(t)}(X_{m, l}, X_{n, s}) PX_{m', l'},
\end{equation*}
so that by Corollary \ref{disjoint3:cor}, the value $\GR(\e; \ \cdot\ )$ for this operator equals
zero for any $\e > 0$. Consequently, $\GR(\e; T'') = 0$ for any $\e>0$. Next, $T'$ is an orthogonal
sum of  operators $T_{m, l}$,  therefore
$$
n_{\pm}(\l; T') = \sum_{\pm v_{ml}>0}n_+(\l; \pm T_{m, l}),\
\GR^{(\pm)}(\l; T')\le \sum_{\pm v_{ml}>0} \GR^{(+)}(\l; \pm T_{m, l}).
$$
So, by  Lemma \ref{AsLemma},
\begin{equation*}
\GN^{(\pm)}(\l; V)\le \sum_{\pm v_{ml}>0} \GR^{(+)}(\l-0; \pm T_{m, l}).
\end{equation*}
Now we apply again Corollary~\ref{disjoint3:cor} and Lemma \ref{AsLemma} to each of operators
$T_{m,l}$, which gives
\begin{equation*}
\GR^{(+)}(\l-0; \pm T_{m, l})\le \GN^{(+)}(\l-0; \pm v_{m, l} X_{m, l}),
\end{equation*}
and this leads to the required upper bound.
\end{proof}

Another kind of additivity holds with respect to the
Landau projections. For $J>1$ we denote by $P^{(J)}$
the projection $P^{(J)}=\sum_{q\le J}P_q$. Consider
the Toeplitz family
$T^{(t)}_{(J)}(V)=P^{(J)}V^{(t)}P^{(J)}$.

\begin{lem}\label{AdditP:lem}
For $V\in \plainL1(\R^2) + \plainL2(\R^2)$, $\l>0,$
\begin{equation}\label{AdditP}
    \sum_{q\le J}\gn_q^{(\pm)}(\l+0; V)
    \le\gr^{(\pm)}\bigl(\l; T^{(t)}_{(J)}\bigr)
    \le\GR^{(\pm)}\bigl(\l; T^{(t)}_{(J)}\bigr)
    \le \sum_{q\le J}\GN_q^{(\pm)}(\l-0; V).
\end{equation}
\end{lem}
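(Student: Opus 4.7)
The plan is to split $T^{(t)}_{(J)}$ as
\[
T^{(t)}_{(J)}(V) = D^{(t)} + H^{(t)}, \qquad D^{(t)} = \sum_{q \le J} T^{(t)}_q(V), \qquad H^{(t)} = \sum_{\substack{q, q' \le J \\ q \ne q'}} T^{(t)}_{q, q'}(V),
\]
and to handle the block-diagonal piece $D^{(t)}$ and the block-off-diagonal remainder $H^{(t)}$ separately. First, we show that $H^{(t)}$ is asymptotically negligible: since $V \in \plainL1 + \plainL2$, Corollary~\ref{disjoint2:cor} yields $\GN_{q, q'}(\e; V) = 0$ whenever $q \ne q'$ and $\e > 0$, so iterating the sub-additivity bound \eqref{gmadd:eq:K} across the finite sum that defines $H^{(t)}$ gives $\GR(\e; H^{(t)}) = 0$, and in particular $\GR^{(\pm)}(\e; \pm H^{(t)}) = 0$ for every $\e > 0$, since for a self-adjoint operator the singular-value counting function majorizes both eigenvalue counts $n_\pm$.

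Next, we exploit the fact that the Landau subspaces $\CL_q$ with distinct $q$ are pairwise orthogonal, so $D^{(t)}$ is the orthogonal direct sum of the Toeplitz operators $T^{(t)}_q(V)$ acting on $\CL_q$, $q \le J$. Consequently
\[
n_\pm(\l; D^{(t)}) = \sum_{q \le J} n_\pm(\l; T^{(t)}_q(V)),
\]
and dividing by $t^2$ and passing to $\limsup$ and $\liminf$ across this finite sum yields
\[
\sum_{q \le J} \gn_q^{(\pm)}(\l; V) \le \gr^{(\pm)}(\l; D^{(t)}) \le \GR^{(\pm)}(\l; D^{(t)}) \le \sum_{q \le J} \GN_q^{(\pm)}(\l; V).
\]

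To conclude, we invoke the $\GR^{(\pm)}, \gr^{(\pm)}$ analogues of \eqref{gmadd:eq:K} applied to the two splittings $T^{(t)}_{(J)} = D^{(t)} + H^{(t)}$ and $D^{(t)} = T^{(t)}_{(J)} + (-H^{(t)})$; combined with the vanishing of $\GR^{(\pm)}(\e; \pm H^{(t)})$ established in the first paragraph, this gives, for every $\e > 0$,
\[
\gr^{(\pm)}(\l + \e; D^{(t)}) \le \gr^{(\pm)}(\l; T^{(t)}_{(J)}) \le \GR^{(\pm)}(\l; T^{(t)}_{(J)}) \le \GR^{(\pm)}(\l - \e; D^{(t)}).
\]
Inserting the bounds on $D^{(t)}$ from the previous paragraph and letting $\e \downarrow 0$ produces the right-limit $\l + 0$ on the lower side and the left-limit $\l - 0$ on the upper side, which is exactly \eqref{AdditP}. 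The main bookkeeping point is keeping the signs and the direction of the $\pm 0$ limits straight; the conceptual content, namely orthogonality of the Landau subspaces plus vanishing of the off-diagonal asymptotic coefficients, is already in hand from Section~\ref{AsCoeff} and Corollary~\ref{disjoint2:cor}, so no new analysis beyond this reorganization is needed.
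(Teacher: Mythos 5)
Your argument is correct and follows essentially the same route as the paper: the same decomposition into the block-diagonal part $T'=\sum_q T^{(t)}_q$ (for which the counting functions add because the $\CL_q$ are orthogonal) and the off-diagonal remainder $T''$ (killed via Corollary~\ref{disjoint2:cor}). The only difference is that the paper finishes by citing Lemma~\ref{AsLemma} directly, whereas you manually unwind the two applications of the $\GR^{(\pm)},\gr^{(\pm)}$ subadditivity and the $\e\downarrow 0$ limits that Lemma~\ref{AsLemma} packages.
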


\begin{proof}
We split the family $T^{(t)}_{(J)}$ as follows:
\begin{equation*}
T^{(t)}_{(J)}=\sum_{q}T^{(t)}_{q}+\sum_{q\ne
q'}T^{(t)}_{q,q'}=T'+T''.
\end{equation*}
 The operators in $T'$ act in orthogonal
subspaces, so their distribution functions add up. By
Corollary~\ref{disjoint2:cor}
$\GR(\e; T^{(t)}_{q,q'})=0,\ q\ne q'$ for any $\e>0$, so that
$\GR(\e; T'') = 0$, and \eqref{AdditP}
follows by Lemma~\ref{AsLemma}.
\end{proof}

\subsection{Model integral operators} In order to  pass from the
conditional results in Subsection~\ref{add.subsect} to
actual calculations, we need at least some operators
for which the asymptotic coefficients are known. Here
we consider such model operators.

\begin{lem} Let $0\le d_1 < d_2<\infty$, and let $W = \chi(d_1< |\bx|< d_2)$.
Then for any $\l >0$
\begin{equation}\label{gm2:eq}
\GM(\l; W) = \gm(\l; W) =
\begin{cases}
\dfrac{B}{2}(d_2^2-d_1^2),\ \  &\l<1,\\[0.2cm]
0,\ \  &\l\ge 1.
\end{cases}
\end{equation}
\end{lem}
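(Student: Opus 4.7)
The plan is to exploit the rotational symmetry of $W$ to diagonalise $S^{(t)}(W)$ explicitly in the Laguerre basis of $\ran P_q$, thereby reducing the lemma to asymptotics of a single explicit one-parameter family of integrals. Since $W$ is radial, the operator $S^{(t)}(W)=W^{(t)}P_q W^{(t)}$ commutes with rotations, and hence is diagonal in the orthonormal basis $\{\psi_{q,\a}\}_{\a\ge -q}$ from \eqref{psi:eq}. By \eqref{mn:eq} its non-zero spectrum coincides with that of $T_q^{(t)}(W)=P_q W^{(t)}P_q$ (using $W^2=W$). A direct computation with \eqref{psi:eq}, via polar coordinates and the substitution $s=B|\bx|^2/2$, gives
\begin{equation*}
T_q^{(t)}(W)\psi_{q,\a}=\mu_\a(t)\psi_{q,\a},\qquad
\mu_\a(t)=\int_{Bd_1^2 t^2/2}^{Bd_2^2 t^2/2}\rho_{q,\a}(s)\,ds,
\end{equation*}
with $\rho_{q,\a}(s)=\frac{q!}{\G(q+\a+1)}\,s^{\a}(L_q^{(\a)}(s))^2 e^{-s}$. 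By \eqref{normalise:eq}, $\rho_{q,\a}$ is a probability density on $(0,\infty)$, so $0\le\mu_\a(t)<1$, which settles the case $\l\ge 1$ immediately (the strict inequality holds since $L_q^{(\a)}$ has only finitely many zeros, so the tail integral outside $(Bd_1^2t^2/2, Bd_2^2t^2/2)$ is strictly positive).

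For $\l<1$ the key step is the concentration statement: for every fixed $q$ and every $\vare>0$,
\begin{equation*}
\int_{\{|s-\a|\ge\vare\a\}}\rho_{q,\a}(s)\,ds\longrightarrow 0 \qquad (\a\to\infty).
\end{equation*}
For $q=0$ this is standard concentration of the $\G(\a+1,1)$-distribution (mean $\a+1$, variance $\a+1$) by Chebyshev. For $q\ge 1$ I would use \eqref{ocenka:eq}, $|L_q^{(\a)}(s)|^2\le (q+\a)^{2q}e^{2s/(q+\a)}$, combined with Stirling's formula for $\G(q+\a+1)$, to reduce the tail estimate to that of the mildly tilted gamma density $s^\a e^{-s(1-2/(q+\a))}/\G(\a+1)$, whose mass concentrates near its mode $\a(1-2/(q+\a))^{-1}\sim\a$. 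This is the only non-routine ingredient in the argument.

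Granting concentration, the counting step is immediate. Fix $\l\in(0,1)$ and $\vare>0$, and set $I_t=(Bd_1^2t^2/2,\ Bd_2^2t^2/2)$. Concentration implies that, for all sufficiently large $t$, $\mu_\a(t)>\l$ whenever $[(1-\vare)\a,(1+\vare)\a]\subset I_t$, and $\mu_\a(t)<\l$ whenever $[(1-\vare)\a,(1+\vare)\a]\cap I_t=\emptyset$. Counting integer values of $\a\ge -q$ in each case gives
\begin{equation*}
\#\bigl\{\a:[(1-\vare)\a,(1+\vare)\a]\subset I_t\bigr\}\le n_+(\l; T_q^{(t)}(W))\le \#\bigl\{\a:[(1-\vare)\a,(1+\vare)\a]\cap I_t\ne\emptyset\bigr\}+O(1),
\end{equation*}
and the lengths of the two $\a$-intervals differ from $B(d_2^2-d_1^2)t^2/2$ by $O(\vare t^2)$. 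Dividing by $t^2$, letting $t\to\infty$ and then $\vare\downarrow 0$, one obtains $\gm(\l;W)=\GM(\l;W)=B(d_2^2-d_1^2)/2$, as claimed. The main obstacle is thus the concentration bound for $\rho_{q,\a}$, but thanks to \eqref{ocenka:eq} it reduces to elementary Chebyshev-type estimates and requires no delicate uniform Laguerre asymptotics.
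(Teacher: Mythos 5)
Your proposal is correct and follows essentially the same route as the paper's own proof: diagonalise $S^{(t)}(W)$ in the Laguerre basis $\{\psi_{q,\a}\}$ using radial symmetry, observe the eigenvalues are $\mu_\a(t)=\int_{Bd_1^2t^2/2}^{Bd_2^2t^2/2}\rho_{q,\a}(s)\,ds$ with $\rho_{q,\a}$ of unit mass, and then show $\mu_\a(t)$ transitions sharply from near $1$ to near $0$ as $\a$ crosses the interval $(Bd_1^2t^2/2,Bd_2^2t^2/2)$, using the Laguerre bound \eqref{ocenka:eq} together with Stirling's formula and elementary (Chebyshev-type) tail estimates. The paper phrases the key step as a direct two-sided asymptotic for $\l_j^{(t)}$ (showing $\l_j\to 0$ for $j\ge(1+\vare)\eta$ and $\l_j\to 1$ for $j\le(1-\vare)\eta$), whereas you package the same estimate as a concentration statement for the densities $\rho_{q,\a}$; this is a cosmetic difference, not a different method. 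One small point worth making explicit when you pass from concentration to the counting inequality: concentration as stated governs the regime $\a\to\infty$, so for the finitely many small $\a$ lying below $I_t$ (which occur when $d_1>0$) you should also invoke the trivial fact that $\mu_\a(t)\to 0$ as $t\to\infty$ for each fixed $\a$, which contributes only $O(1)$ to the count and is absorbed by the $\vare$ in the limit. With that caveat the argument is sound.
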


\begin{proof}
Since the function $W$ is radially symmetric, using \eqref{psi:eq}
we can immediately find all
eigenvalues of the operator $S_q^{(t)}(W)$ explicitly
(see \cite{RaiWar}, Lemma 3.4):
\begin{equation}
\l_j = \l_j^{(t)} = \l_j^{(t)}(d_1, d_2)=
\int\limits_{d_1 t<|\bx| < d_2t} |\psi_{q, j}(\bx)|^2
d\bx
=\frac{q!}{(j+q)!} \int\limits_{\eta_1}^{\eta_2}
\xi^j e^{-\xi} \bigl(L_q^{(j)}(\xi)\bigr)^2 d\xi,
\end{equation}
where we denote $\eta_k=B(d_k t)^2/2, k=1,2$. Note that $\l_j$'s
are not necessarily labeled in the usual decreasing order.
The functions $\psi_{q, j}$ are normalized, so
$\l_j\le 1$ and therefore \eqref{gm2:eq} holds for $\l\ge 1$.

 Let now $\l <1$. We will find the asymptotics
 of $\l_j = \l_j^{(t)}(0, d)$ as $t\to\infty$.
Denote $\eta = B(td)^2/2$ and fix an $\e >0$.
Suppose first that $j\ge (1+\e)\eta$. Then
\eqref{ocenka:eq} implies
\begin{equation*}
\l_j\le (j+q)^{2q} \frac{q!}{(j+q)!} \int_0^\eta \xi^j
\exp\biggl(-\bigl(1-\frac{2}{j+q}\bigr)\xi\biggr) d\xi
\le (j+q)^{2q} \frac{q!}{(j+q)!} e^{\frac{2}{1+\e}}
\int_0^\eta \xi^j e^{-\xi} d\xi.
\end{equation*}
The maximum of the integrand is attained at $\xi = j$,
it grows for $\xi<j$, thus we can estimate it from
above by $\eta^j e^{-\eta}$, which leads to the bound
\begin{equation*}
\l_j\le C(j+q)^{2q} \frac{q!}{(j+q)!}
\eta^{j+1}e^{-\eta}.
\end{equation*}
Using the Stirling formula, we get
\begin{equation}\label{aj:eq}
\l_j \le  C \frac{(j+q)^{2q}q!}{(j+q)^{j+q + 1/2}} \eta^{j+1} e^{j+q - \eta} \le e^q
(j+q)^{q-1/2}q!
\eta \tau_j(\eta),\ \ \tau_j(\eta) = \biggl(\frac{\eta}{j}\biggr)^{j} e^{j - \eta}.
\end{equation}
 To estimate $\tau_j(\eta)$ we rewrite it as
\begin{equation*}
\tau_j(\eta) = \exp\biggl[ - \int_\eta^j
\biggl(\frac{j}{s}-1\biggr) ds \biggr].
\end{equation*}
Now we fix $\e_1\in (0, \e)$ and obtain:
\begin{align*}
\tau_j(\eta) \le   &\ \exp\biggl[ -  \!\!\! \!\!\!\int\limits_\eta^{j(1+\e_1)^{-1}}\!\!
\biggl(\frac{j}{s}-1\biggr)ds \biggr] \le \exp\biggl[
- \e_1\!\!\!\!\!\! \!\!\!\int
\limits_\eta^{j(1+\e_1)^{-1}} \!\!\! \!\!\!\!\!\!ds \biggr]\\[0.2cm]
\le &\ \exp\biggl( -j \e_1\frac{\e-\e_1}{(1+\e)(1+\e_1)}\biggr).
\end{align*}
This shows that $\l_j$ tends to zero very fast as
$\eta\to\infty$ and $j\ge (1+\e)\eta$. Assume now that
$j\le (1-\e)\eta$. Since the functions $\psi_{q, j}$
are normalized, we have
\begin{equation*}
\l_j = 1 - \mu_j,\ \ \mu_j = \frac{q!}{(j+q)!}
\int_\eta^\infty \xi^j e^{-\xi}
\bigl(L_q^{(j)}(\xi)\bigr)^2 d\xi > 0.
\end{equation*}
Then, using \eqref{ocenka:eq} again, we obtain
\begin{equation*}
\mu_j\le (j+q)^{2q} \frac{q!}{(j+q)!} \int_\eta^\infty
\xi^j
\exp\biggl(-\bigl(1-\frac{2}{j+q}\bigr)\xi\biggr) d\xi
\end{equation*}
For an arbitrary $\e_1\in (0, 1)$ rewrite the
integrand as follows:
\begin{equation*}
\biggl[\xi^j e^{-(1-\e_1)\xi}\biggr]
\exp\biggl(-\bigl(\e_1-\frac{2}{j+q}\bigr)\xi\biggr)
\end{equation*}
The maximum of the term in  brackets is attained at
$j(1-\e_1)^{-1}$. For $\e_1 > \e$, so that
$j(1-\e_1)^{-1}<\eta$, we conclude that on the
interval $[\eta, \infty)$ the integrand does not
exceed
\begin{equation*}
\biggl[\eta^j e^{-(1-\e_1)\eta}\biggr]
\exp\biggl(-\bigl(\e_1-\frac{2}{j+q}\bigr)\xi\biggr).
\end{equation*}
Integrating, we get
\begin{equation*}
\mu_j\le (j+q)^{2q} \frac{q!}{(j+q)!} \frac{1}{\e_1-
2(j+q)^{-1}} \eta^j e^{-(1-2(j+q)^{-1})\eta}.
\end{equation*}
As at the first step of the proof, by the Stirling
formula we obtain
\begin{equation*}
\mu_j\le C(j+q)^{q-1/2}q e^q  \frac{1}{\e_1-
2(j+q)^{-1}}   e^{2(j+q)^{-1}\eta} \tau_j(\eta),
\end{equation*}
with the function $\tau_j(\eta)$ defined in
\eqref{aj:eq}. To estimate it, we rewrite
\begin{equation*}
\tau_j(\eta) = \exp\biggl[ - \int_j^\eta
\biggl(1-\frac{j}{s}\biggr) ds \biggr].
\end{equation*}
Choose an $\e_1\in (0, \e)$ and estimate:
\begin{equation*}
\tau_j(\eta) \le  \exp\biggl[ -
\int_{j(1-\e_1)^{-1}}^\eta \biggl(1-\frac{j}{s}\biggr)
ds \biggr] \le \exp\biggl[ -
\e_1\int_{j(1-\e_1)^{-1}}^\eta ds \biggr] \le
\exp\biggl( - \e_1 \eta \frac{\e-\e_1}{1-\e_1}\biggr).
\end{equation*}
This shows that $\mu_j$ tends to zero very fast as $\eta\to\infty$ and
$j\le (1-\e)\eta$, that is
$\l_j \to 1$ as $t\to \infty$, uniformly for $j\le (1-\e)\eta$.
 Summarizing  the above calculations, we see that for
sufficiently large $t$ the following inequalities hold:
\begin{equation*}
\l_j(d_1, d_2)\equiv \l_j(0, d_2) - \l_j(0, d_1)
\begin{cases}
< \l, \ j < (1-\e) \eta_1,\\
>\l, \ (1+\e)\eta_1 < j < (1-\e) \eta_2,\\
< \l, \ j > (1+\e)\eta_2.
\end{cases}
\end{equation*}
Consequently, $ (1-\e)\eta_2 - (1+\e) \eta_1 \le n(\l,
S^{(t)}(W)) \le (1+\e)\eta_2 - (1-\e) \eta_1$ for
sufficiently large $t$. Passing to the limit, we
obtain
\begin{equation*}
\gm(\l; W)\ge (1-\e)\frac{B d_2^2}{2} - (1+\e)
\frac{Bd_1^2}{2},\; \GM(\l; W)\le
(1+\e)\frac{Bd_2^2}{2} - (1-\e) \frac{Bd_1^2}{2}.
\end{equation*}
Since $\e\in (0, 1)$ is arbitrary, this entails
\eqref{gm2:eq}.
\end{proof}

Now, using the additivity and the calculations for the
model operator we can find $\gm(\l; w_{m, l}X_{m, l})$
and  $\GM(\l; w_{m, l}X_{m, l})$ which turn out to be
equal.

\begin{lem}\label{single:lem}
If $\l \ge |w_{m, l}|^2$, then $\GM(\l, w_{m, l}X_{m,
l}) = 0$. For any $\l\in (0, |w_{m, l}|^2)$
\begin{equation*}
\gm(\l; w_{m, l}X_{m, l}) = \GM(\l; w_{m, l}X_{m, l})
= \frac{B}{2\pi} |\Om_{m, l}|.
\end{equation*}
\end{lem}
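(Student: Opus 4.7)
The plan is to reduce to the normalized case $w_{m,l}=1$ and then relate the sector operator $S^{(t)}(X_{m,l})$ to the full-annulus operator $S^{(t)}(\chi_{A_m})$, where $A_m=\{(m-1)d<|\bx|<md\}$, for which the preceding lemma furnishes a genuine, $\l$-independent limit. Since $S^{(t)}(w_{m,l}X_{m,l})=|w_{m,l}|^2\,X_{m,l}^{(t)}P_q X_{m,l}^{(t)}$, the counting function rescales by $|w_{m,l}|^{-2}$; the norm bound $\|X_{m,l}^{(t)}P_q X_{m,l}^{(t)}\|\le 1$ then immediately handles $\l\ge|w_{m,l}|^2$. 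The remaining task is to show $\gm(\l;X_{m,l})=\GM(\l;X_{m,l})=\tfrac{B}{2\pi}|\Om_{m,l}|$ for $\l\in(0,1)$.

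The bridge between the sector and the annulus rests on two structural observations. First, in the symmetric gauge $\ba=(-\tfrac{B}{2}x_2,\tfrac{B}{2}x_1)$ the Hamiltonian $H_0$, and therefore every $P_q$, is rotationally invariant; rotation by $2\pi(l-1)/N$ carries $\Om_{m,1}$ onto $\Om_{m,l}$, so the operators $T_l:=X_{m,l}^{(t)}P_q X_{m,l}^{(t)}$ are pairwise unitarily equivalent for fixed $m$. Second, the scaled sectors $\Om_{m,l}^{(t)}$ have pairwise disjoint supports, so $T':=\sum_{l=1}^N T_l$ is effectively an orthogonal direct sum on $L^2(A_m^{(t)})$. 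Combined, these facts yield the \emph{exact} identity $n(\l;T')=N\,n(\l;T_1)$ for every $\l>0$, and hence $\GR(\l;T')=N\,\GM(\l;X_{m,1})$ with the analogous relation for $\gr$. Writing $T:=S^{(t)}(\chi_{A_m})=T'+T''$, the off-diagonal remainder $T''=\sum_{l\ne l'}X_{m,l}^{(t)}P_q X_{m,l'}^{(t)}$ is self-adjoint and satisfies $\GR(\e;T'')=0$ for every $\e>0$ by Lemma~\ref{tiles:lem} and the subadditivity of singular-value counts.

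Applying the Ky Fan inequality \eqref{add:eq} to $T=T'+T''$ (and its mirror $T'=T-T''$) in both directions yields, for every $\e\in(0,\l)$,
\begin{equation*}
\GR(\l;T')\le \GR(\l-\e;T)\quad\text{and}\quad \GR(\l;T)\le \GR(\l-\e;T'),
\end{equation*}
with analogous inequalities for $\gr$. The preceding lemma tells us that $\gr(\mu;T)=\GR(\mu;T)=\tfrac{B}{2\pi}|A_m|$ for \emph{every} $\mu\in(0,1)$, i.e., the asymptotic counting function of $T$ is constant on this interval. Hence, for any $\l\in(0,1)$, choosing $\e>0$ so small that $\l-\e$ still lies in $(0,1)$, the two inequalities squeeze and pin down $\GR(\l;T')=\gr(\l;T')=\tfrac{B}{2\pi}|A_m|$; dividing by $N$ and invoking the direct-sum identity gives the claim. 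The main obstacle, and the reason the argument actually closes, is exactly this $\e$-gap in the Ky Fan bounds: the shift can be absorbed \emph{only} because the clustering of eigenvalues of $S^{(t)}(\chi_{A_m})$ near $0$ and $1$ (established in the preceding lemma) makes the annulus asymptotic constant across the entire interval $(0,1)$; rotational symmetry, in turn, is what converts the one-sided additivity of Lemma~\ref{additivity.lem} into the exact identity $\GR(\l;T')=N\,\GM(\l;X_{m,1})$ needed to identify the constant.
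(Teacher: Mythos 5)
Your proof is correct and follows essentially the same route as the paper: reduce to $w_{m,l}=1$ by rescaling, note the norm bound to handle $\l\ge 1$, exploit rotational invariance of $P_q$ to make all sectors of the annulus $A_m$ unitarily equivalent, kill the off-diagonal terms via Lemma~\ref{tiles:lem}, and absorb the resulting $\e$-shifts using the constancy on $(0,1)$ of the annulus coefficients from \eqref{gm2:eq}. The only cosmetic difference is that you invoke the Ky Fan inequality \eqref{adds:eq} directly where the paper routes through its packaged Lemma~\ref{additivity.lem} (whose proof is that same Ky Fan step), so the two arguments are logically identical.
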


\begin{proof}
Since $n(\l,S^{(t)}(w_{m, l}X_{m,
l}))=n(\l|w_{m, l}|^{-2},S^{(t)}(X_{m, l}) )$, it
suffices to consider the case $w_{m,l}=1.$ The norm of
the operator $S^{(t)}(X_{m, l}) $ is not greater than
1,  and this takes care of the case $\l\ge1.$ Next
consider $X_m = \sum_{l=1}^N X_{m, l}$. It is clear
 that the asymptotic coefficients are the same for all
 sectors
 $X_{m,l}$, $l=1,\dots,N$. Therefore, by
 Lemma~\ref{additivity.lem},
 \begin{equation}\label{sector}
    \gm(\l+0;X_m)\le N\gm(\l;X_{m,l})\le
    N\GM(\l;X_{m,l})\le\GM(\l-0;X_m).
\end{equation}
Now \eqref{gm2:eq}, produces the required formula.
\end{proof}

Before we proceed to treating more general functions $W$,
we introduce, similarly to \eqref{lm:eq},
the appropriate asymptotic coefficients.
For  $V\in\plainL1(\R^2)$ and
$\l> 0$ we define $sup$- and  $sub$-
measures of $V$:
\begin{equation}\label{apm:eq}
A^{(\pm)}(\l; V) = A(\l, \infty; \pm V) = |\{\bx: \pm V(\bx)> \l\}|,\
\end{equation}
With this notation, the result of Lemma \ref{single:lem} reads
\begin{equation}\label{single:eq}
\gm(\l; w_{m, l}X_{m, l}) = \GM(\l; w_{m, l}X_{m, l})
= \frac{B}{2\pi} A^{(+)}(\l; |w_{m, l}|^2 X_{m, l}),
\end{equation}
for all $\l >0$.

\begin{cor}\label{single1:cor}
Let $v_{m, l}$ be real for some $m, l$. Then
\begin{equation*}
\gn^{(\pm)}(\l; v_{m, l}X_{m, l}) = \GN^{(\pm)}(\l; v_{m, l}X_{m, l})
= \frac{B}{2\pi} A^{(\pm)}(\l;  v_{m, l} X_{m, l}),
\end{equation*}
for all $\l >0$.
\end{cor}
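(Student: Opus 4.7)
The plan is to deduce this corollary directly from Lemma~\ref{single:lem} via the identity \eqref{mn:eq} relating the Toeplitz-operator counting functionals $\GN^{(\pm)}$ to the functionals $\GM, \gm$ for the operators $S^{(t)}(W)$.

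First, I would reduce to the case $v_{m, l}>0$. Indeed, since $v_{m, l}$ is real, $T^{(t)}(v_{m, l} X_{m, l}) = v_{m, l}\, T^{(t)}(X_{m, l})$ is a self-adjoint operator, and replacing $v_{m, l}$ by $-v_{m, l}$ merely swaps the roles of $\GN^{(+)}$ and $\GN^{(-)}$, as well as those of $A^{(+)}$ and $A^{(-)}$. If $v_{m, l}=0$ both sides vanish trivially, while if $v_{m, l}>0$ the operator $T^{(t)}(v_{m, l} X_{m, l})$ is non-negative, so $\GN^{(-)}(\l; v_{m, l} X_{m, l})=0$ and likewise $A^{(-)}(\l; v_{m, l} X_{m, l})=0$, taking care of the minus sign.

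For $v_{m, l}>0$, I would set $W=\sqrt{v_{m, l}}\,X_{m, l}$ (so that $|W|^2 = v_{m, l} X_{m, l}$, using $X_{m, l}^2 = X_{m, l}$). Then \eqref{mn:eq} gives
\begin{equation*}
\GN^{(+)}(\l;\, v_{m, l} X_{m, l}) = \GM(\l;\, \sqrt{v_{m, l}}\, X_{m, l}),\qquad \gn^{(+)}(\l;\, v_{m, l} X_{m, l}) = \gm(\l;\, \sqrt{v_{m, l}}\, X_{m, l}),
\end{equation*}
and Lemma~\ref{single:lem} applied with weight $w_{m, l}=\sqrt{v_{m, l}}$ yields that both of these equal $\frac{B}{2\pi}|\Om_{m, l}|$ if $\l < v_{m, l}$ and $0$ if $\l\ge v_{m, l}$.

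It remains only to compare the right-hand side with $\frac{B}{2\pi} A^{(+)}(\l; v_{m, l} X_{m, l})$. By definition \eqref{apm:eq},
\begin{equation*}
A^{(+)}(\l;\, v_{m, l} X_{m, l}) = |\{\bx : v_{m, l} X_{m, l}(\bx) > \l\}|,
\end{equation*}
which equals $|\Om_{m, l}|$ when $0<\l<v_{m, l}$ and $0$ when $\l\ge v_{m, l}$. This matches the expression above, completing the proof. No step is an obstacle here: the argument is essentially a bookkeeping exercise combining the definitions in \eqref{mn:eq}, \eqref{apm:eq} with the already-established Lemma~\ref{single:lem}.
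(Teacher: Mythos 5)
Your argument is correct and is essentially the same as the paper's: both reduce to the sign-definite case, apply the identity \eqref{mn:eq} with $W=\sqrt{\pm V}$, and invoke Lemma~\ref{single:lem} (the paper via its restatement \eqref{single:eq}). The only cosmetic difference is that you unwind the definition of $A^{(\pm)}$ by hand rather than citing \eqref{single:eq} directly.
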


\begin{proof} Denote $V = v_{m, l} X_{m, l}$.
If $\pm v_{m, l} >0$, then obviously $\GN^{(\mp)}(\l; V) = 0$, and
$\GN^{(\pm)}(\l; V) = \GN^{(+)} (\l; \pm V) = \GM(\l; \sqrt{\pm V})$.
It remains to use formula \eqref{single:eq}.
\end{proof}

In order to proceed we need to establish the "continuity" of the coefficients
$A^{(\pm)}(\l; V)$ in the function $V$:

\begin{lem}\label{measure:lem}
Let $W_\d = V+V'_\d,\ \d\not=0$, where $V$,
$V'_\d\in \plainL1(\R^2)$, $V$ does not depend on
$\d$ and $\|V'_\d\|_1\to 0$ as $\d\to 0$. Then
\begin{equation}\label{elementary4:eq}\begin{split}
A^{(\pm)}(\l; V)\le &\ \liminf_{\d\to 0} A^{(\pm)}(\l; W_\d)
\\[0.2cm]
\le & \limsup_{\d\to 0} A^{(\pm)}(\l; W_\d) \le A^{(\pm)}(\l-0; V).
\end{split}
\end{equation}
\end{lem}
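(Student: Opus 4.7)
The plan is to work with $A^{(+)}$ only; the case of $A^{(-)}$ then follows at once by applying the result to $-V$ in place of $V$. Everything will be derived from two set inclusions together with the Chebyshev bound
\begin{equation*}
|\{\bx: |V'_\d(\bx)|>\eta\}|\le \eta^{-1}\|V'_\d\|_1 \xrightarrow[\d\to 0]{} 0,\ \ \eta>0.
\end{equation*}

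For the lower bound I would fix $\eta>0$ small (with $\l+\eta>0$) and observe that if $V(\bx)>\l+\eta$ and $|V'_\d(\bx)|\le \eta$, then $W_\d(\bx)=V(\bx)+V'_\d(\bx)>\l$. This gives the inclusion
\begin{equation*}
\{\bx: V(\bx)>\l+\eta\}\setminus \{\bx: |V'_\d(\bx)|>\eta\}\subset \{\bx: W_\d(\bx)>\l\},
\end{equation*}
so that
\begin{equation*}
A^{(+)}(\l;W_\d)\ge A^{(+)}(\l+\eta;V) - |\{\bx: |V'_\d(\bx)|>\eta\}|.
\end{equation*}
Taking $\liminf_{\d\to 0}$ kills the last term by the Chebyshev estimate, and then letting $\eta\downarrow 0$ yields $\liminf_{\d\to 0} A^{(+)}(\l;W_\d)\ge A^{(+)}(\l+0;V)=A^{(+)}(\l;V)$ by right-continuity of $A^{(+)}(\cdot;V)$ at $\l$ (which in turn follows from continuity of Lebesgue measure along the increasing family of sets $\{V>\l+\eta\}\uparrow \{V>\l\}$, all of finite measure by Chebyshev since $\l>0$ and $V\in\plainL1(\R^2)$).

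For the upper bound I reverse the roles: on the event $\{W_\d>\l\}\cap\{|V'_\d|\le \eta\}$ one has $V>\l-\eta$, so
\begin{equation*}
\{\bx: W_\d(\bx)>\l\}\subset \{\bx: V(\bx)>\l-\eta\}\cup \{\bx: |V'_\d(\bx)|>\eta\},
\end{equation*}
giving $A^{(+)}(\l;W_\d)\le A^{(+)}(\l-\eta;V)+|\{|V'_\d|>\eta\}|$. Taking $\limsup_{\d\to 0}$ and then $\eta\downarrow 0$ produces $\limsup_{\d\to 0}A^{(+)}(\l;W_\d)\le A^{(+)}(\l-0;V)$, the left limit being the target quantity by definition.

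The argument is essentially routine; the only place where one has to be slightly careful is the passage to the limit in $\eta$, which requires that the relevant superlevel sets have finite measure so that continuity of measure applies. This is guaranteed by the hypothesis $V\in\plainL1(\R^2)$ together with $\l>0$. I expect no real obstacle beyond this bookkeeping.
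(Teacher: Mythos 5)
Your proposal is correct and is essentially the same argument as the paper's: both prove the $A^{(+)}$ case, split $\{W_\d>\l\}$ according to whether $|V'_\d|\le\eta$, invoke the Chebyshev bound $|\{|V'_\d|>\eta\}|\le\eta^{-1}\|V'_\d\|_1$, and then let $\d\to0$ and $\eta\downarrow0$. The only addition in your write-up is the explicit justification of the right-continuity $A^{(+)}(\l+0;V)=A^{(+)}(\l;V)$ via continuity of Lebesgue measure along the increasing family of finite-measure superlevel sets, which the paper leaves implicit.
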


\begin{proof}
It suffices to
consider the sign "$+$". By definition, for any $\e\in
(0, \l)$,
\begin{equation*}\label{ele1:eq}\begin{split}
A^{(+)}(\l; W_\d) \le  &\ |\{\bx: W_\d(\bx)>
\l\}\cap \{\bx: |V'_\d(\bx)|\le \e\}|
+ |\{\bx: |V'_\d(\bx)|>\e\}| \\[0.2cm]
\le &\ A^{(+)}(\l - \e; V ) + \e^{-1}\|V'_\d\|_1.\end{split}
\end{equation*}
For the last estimate we have used the Chebyshev
inequality.
Passing to the limit as $\d\to 0$ and $\e\downarrow 0$,
we get the upper bound in
\eqref{elementary4:eq}. Similarly,
write
\begin{equation*}
A^{(+)}(\l; W_\d) \ge A^{(+)}(\l + \e; V)
-  \e^{-1}\|V'_\d\|_1.
\end{equation*}
Passing again to the limit as $\d\to 0$ and $\e\downarrow 0$,
we get the lower bound in
\eqref{elementary4:eq}.
\end{proof}

\subsection{Eigenvalue asymptotics for Toeplitz operators}
Corollary \ref{single1:cor} enables us to establish the spectral asymptotics for the
Toeplitz operator $T^{(t)}(V)$ with a piece-wise constant function $V$.

\begin{lem}\label{ToepNonsd:lem}
Let $V = \sum
v_{ml}X_{ml}$, where the sum is finite and $v_{ml}$
are real-valued.
Then for any $q\ge0$
\begin{equation}\label{ToepAsNSD}
\frac{B}{2\pi} A^{(\pm)}(\l; V)
\le \gn^{(\pm)}(\l; V) \le \GN^{(\pm)}(\l; V) \le \frac{B}{2\pi} A^{(\pm)}(\l-0; V),
\end{equation}
for all $\l >0$.
\end{lem}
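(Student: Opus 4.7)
The plan is to combine the two tools just proved: the additivity statement for piecewise constant $V$ from Lemma~\ref{additivity1:lem}, and the single-tile identity from Corollary~\ref{single1:cor}. Everything else is bookkeeping on sums of tile measures.

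First, I would apply Lemma~\ref{additivity1:lem} directly to the given $V$ to obtain
\begin{equation*}
\sum_{\pm v_{m,l}>0}\gn^{(+)}(\l+0;\pm v_{m,l}X_{m,l})
\le\gn^{(\pm)}(\l;V)\le\GN^{(\pm)}(\l;V)
\le\sum_{\pm v_{m,l}>0}\GN^{(+)}(\l-0;\pm v_{m,l}X_{m,l}).
\end{equation*}
Next, I would invoke Corollary~\ref{single1:cor} for each single-tile term, using the monotonicity of $\gn^{(+)}$ and $\GN^{(+)}$ in $\l$ to push the $\pm0$ inside $A^{(+)}$; this replaces each summand by $\tfrac{B}{2\pi}A^{(+)}(\l\pm 0;\pm v_{m,l}X_{m,l})$.

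The final step is a measure-theoretic identification. Since $V$ equals $v_{m,l}$ on $\Om_{m,l}$ and vanishes outside the finite union $\bigcup_{m,l}\Om_{m,l}$, one has $A^{(+)}(\l+0;v_{m,l}X_{m,l})=|\Om_{m,l}|$ iff $v_{m,l}>\l$, and $A^{(+)}(\l-0;v_{m,l}X_{m,l})=|\Om_{m,l}|$ iff $v_{m,l}\ge\l$, and zero otherwise. Summing over the tiles with $v_{m,l}>0$ and using disjointness, together with the right-continuity
\begin{equation*}
A^{(+)}(\l+0;V)=|\{V>\l\}|=A^{(+)}(\l;V),\qquad
A^{(+)}(\l-0;V)=|\{V\ge\l\}|,
\end{equation*}
I would identify the two bounding sums with $\tfrac{B}{2\pi}A^{(+)}(\l;V)$ and $\tfrac{B}{2\pi}A^{(+)}(\l-0;V)$ respectively. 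The "$-$" case is identical after replacing $V$ by $-V$.

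I do not expect any real obstacle here: the nontrivial analytic work has already been done in establishing additivity (Lemma~\ref{additivity1:lem}) and the radial model computation that feeds into Corollary~\ref{single1:cor}; what remains is purely combinatorial/measure-theoretic. The one point that deserves explicit mention in the write-up is the right-continuity $A^{(+)}(\l+0;V)=A^{(+)}(\l;V)$, which is exactly what accounts for the asymmetry between the "$\l$" on the lower bound and the "$\l-0$" on the upper bound in \eqref{ToepAsNSD}.
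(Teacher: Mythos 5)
Your argument is correct and is precisely the paper's proof: the paper's entire proof reads ``Use Lemma~\ref{additivity1:lem} and Corollary~\ref{single1:cor},'' and you have simply filled in the bookkeeping (the identification of the two single-tile sums with $A^{(\pm)}(\l;V)$ and $A^{(\pm)}(\l-0;V)$, respectively, and the right-continuity of $\l\mapsto A^{(+)}(\l;V)$) that the authors leave implicit. No genuine differences in route or content.
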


\begin{proof}   Use Lemma \ref{additivity1:lem} and Corollary \ref{single1:cor}.
\end{proof}

We are now in position to treat the general case.

\begin{thm}\label{central:thm}
For any $V\in \plainL1(\R^2)$
\begin{equation}\label{var:eq}
\frac{B}{2\pi}A^{(\pm)}(\l; V) \le  \gn^{(\pm)}(\l; V)
\le  \GN^{(\pm)}(\l; V) \le
\frac{B}{2\pi}A^{(\pm)}(\l-0; V).
\end{equation}
Moreover, if $\l$ is a generic value for $\pm V$, we
have the asymptotics
\begin{equation}\label{vargen:eq}
 \gn^{(\pm)}(\l; V)
=  \GN^{(\pm)}(\l; V) = \frac{B}{2\pi}A^{(\pm)}(\l;
V).
\end{equation}
\end{thm}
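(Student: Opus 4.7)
The plan is to reduce the general case to the piece-wise constant case already settled in \lemref{ToepNonsd:lem}, using \lemref{AsLemma} as the abstract perturbation bridge and \lemref{measure:lem} to transfer the convergence over to the level-set coefficients $A^{(\pm)}$.

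First, for each $\d>0$ I would construct a piece-wise constant approximant $V_\d=\sum_{m,l} v_{m,l}(\d) X_{m,l}$ (finite sum) of $V$. Concretely: fix a radius $R(\d)\to\infty$ so that $\int_{|\bx|>R(\d)}|V|\,d\bx\to 0$, tile the disc $|\bx|\le R(\d)$ by the sectors $\Om_{m,l}$ of Section~\ref{Toeplitz:Section} with $d=\d$ and $N=N(\d)\sim 1/\d$, and take $v_{m,l}(\d)$ to be the mean of $V$ over $\Om_{m,l}$. By density of step functions in $\plainL1(\R^2)$ (equivalently, the Lebesgue differentiation theorem combined with absolute continuity of the integral), the residual $V'_\d:=V-V_\d$ satisfies $\|V'_\d\|_1\to 0$ as $\d\to 0$.

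Next, write $T^{(t)}(V)=T^{(t)}(V_\d)+T^{(t)}(V'_\d)$. \lemref{base:lem} gives $\GN(\e;V'_\d)\le (B/2\pi\e)\|V'_\d\|_1\to 0$ for every $\e>0$, verifying the hypothesis \eqref{AsLemma:eq} of \lemref{AsLemma} for the self-adjoint compact family $K_\d=T^{(t)}(V_\d)$, $K'_\d=T^{(t)}(V'_\d)$. That lemma therefore yields
\begin{equation*}
\GR^{(\pm)}\bigl(\l;\,T^{(t)}(V)\bigr)\le \lim_{\e\downarrow 0}\limsup_{\d\to 0}\GN^{(\pm)}(\l-\e;V_\d),
\end{equation*}
together with the symmetric lower bound in terms of $\gn^{(\pm)}(\l+\e;V_\d)$. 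Applying \lemref{ToepNonsd:lem} to each $V_\d$ converts the right-hand side to $(B/2\pi)\limsup_\d A^{(\pm)}(\l-\e-0;V_\d)$, and \lemref{measure:lem} dominates this by $(B/2\pi)A^{(\pm)}(\l-\e-0;V)$. Passing $\e\downarrow 0$ and using monotonicity of $A^{(\pm)}(\cdot;V)$ returns $(B/2\pi)A^{(\pm)}(\l-0;V)$, giving the upper bound in \eqref{var:eq}. The lower bound is obtained analogously, now exploiting the liminf part of \lemref{measure:lem} and the right-continuity $A^{(\pm)}(\l+0;V)=A^{(\pm)}(\l;V)$.

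The asymptotic equality \eqref{vargen:eq} follows at once: at a generic value $\l$ of $\pm V$ one has $A^{(\pm)}(\l-0;V)=A^{(\pm)}(\l;V)$, so the upper and lower bounds of \eqref{var:eq} coincide. The only mildly delicate point I anticipate is the bookkeeping between $\limsup_\d$ and the left-limit $A^{(\pm)}(\mu-0;V_\d)$ (which is not directly covered by \lemref{measure:lem}). I would sidestep this by the monotonicity of $A^{(\pm)}$ in $\l$: replace $A^{(\pm)}(\mu-0;V_\d)$ by $A^{(\pm)}(\mu';V_\d)$ for $\mu'<\mu$, apply \lemref{measure:lem} at $\mu'$, and then pass $\mu'\uparrow\mu$ outside the $\d$-limit.
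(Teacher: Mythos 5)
Your argument reproduces the paper's own proof: both reduce to the piecewise-constant case of Lemma~\ref{ToepNonsd:lem} by choosing a finite annular-sector step approximant $\tilde V_\d$ with $\|V-\tilde V_\d\|_1\to 0$, use Lemma~\ref{base:lem} to verify the hypothesis \eqref{AsLemma:eq} of Lemma~\ref{AsLemma}, and then invoke Lemma~\ref{measure:lem} together with monotonicity and the limit $\e\downarrow 0$ to pass from $A^{(\pm)}(\cdot;\tilde V_\d)$ to $A^{(\pm)}(\l\mp 0;V)$. You simply spell out the construction of the approximant and the $\limsup_\d$ versus left-limit bookkeeping in more detail than the paper does, but the decomposition, the chain of lemmas, and the limit passage are identical.
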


\begin{proof}
Since $n_-(\l; T^{(t)}(V)) = n_+(\l; T^{(t)}(-V))$, it
suffices to consider the sign $"+"$ only.

 For a positive $\d$ we find a sufficiently fine tiling
of the plane by annular sectors $\Omega_{m,l}$
and a piecewise constant function $\tilde V_\d$ represented in the form
\eqref{stepweight1} with a finite sum,
such that $\|V - \tilde V_\d\|_1< \d$.
Hence by Lemma \ref{base:lem}, $ \GN(\e; V-\tilde V_\d)\le B \d
(2\pi\e)^{-1}$. Furthermore,  by Lemma \ref{ToepNonsd:lem},
\begin{equation}\label{central:Vd}
\GN^{(+)}(\l; \tilde V_\d)\le  \frac{B}{2\pi}A^{(+)}(\l-0;\tilde V_\d),
\end{equation}
Thus by Lemma \ref{AsLemma}
\begin{equation*}
\GN^{(+)}(\l; V)\le \frac{B}{2\pi}\limsup_{\e\downarrow 0} \limsup_{\d\to 0}
A^{(+)} (\l-\e; \tilde V_\d).
\end{equation*}
By virtue of Lemma \ref{measure:lem}, the right hand side does not exceed
$B (2\pi)^{-1} A^{(+)}(\l-0; V)$, as required.

The corresponding lower bound
for $\gn^{(+)}(\l; V)$ is established similarly.
\end{proof}

\section{Reduction to Toeplitz operators}\label{reduction:sect}

Let $(\l_1, \l_2)\Subset (\L_\nu, \L_{\nu+1})$ with
some $\nu = -1, 0, \dots$, as in Theorem
\ref{main:thm}.  We denote
\begin{equation*}
a = {(\l_1+\l_2)}/{2},\ \ b = {(\l_2-\l_1)}/{2},\ \
H_{0a}=H_0-a, \ \ H_a=H_a(V^{(t)})=H+V^{(t)}-a.
\end{equation*}
 Our analysis of the counting
function $N(\l_1, \l_2; H)$ is based upon the obvious
relations (cf. \eqref{1:transf}):
\begin{equation}\label{square:eq}
N(\l_1, \l_2; H) = N(\l_1 - a, \l_2 - a; H - a) =
N(-b, b; H - a) = N\bigl(b^2; (H - a)^2\bigr),
\end{equation}
where, recall, $N(\eta;L)$ stands for the number of
eigenvalues of a  semi-bounded operator $L$ below
$\eta$. For methodological purposes we need to consider
an operator having somewhat more general form. Namely,
with $a$ fixed, for real-valued functions
$V\in\plainL2( \R^2), Z\in\plainL1(\R^2)$ we set
\begin{equation}\label{K:eq}
 L^{(t)}(V, Z) = H_{0a}^2 + V^{(t)} H_{0a} +
H_{0a}V^{(t)} + Z^{(t)},
\end{equation}
so that $(H-a)^2 = L^{(t)}(V, V^2)$.  Under
these conditions the operator is well
defined via the quadratic form
$$
\| H_{0a} u\|^2 +
2\re (H_{0a}u, V^{(t)} u) + (Z^{(t)}u,u)
$$
for $u\in \dom(H_0)$.
Using the diamagnetic inequality, it is easy to check that each term
of the perturbation
$V^{(t)} H_{0a} + H_{0a}V^{(t)} + Z^{(t)}$ is $H_{0a}^2$-form-compact.
The lowest point of
the essential spectrum of $L^{(t)}(V, Z)$ is
\begin{equation}\label{eta0:eq}
\eta_0 = \min_{q\ge 0} (\L_q - a)^2 >0.
\end{equation}
We are going to study the discrete spectrum of the
operator $L^{(t)}$, independently of its connection
with  $H^{(t)}$. The following important lemma
establishes asymptotic relations of the spectrum of
$L^{(t)}(V, Z)$ below $\eta_0$ and the spectra of
certain Toeplitz type operators.

\begin{lem}\label{BSred:lem}
Suppose that $V,Z\in\plainC\infty_0(\R^2)$, and let $\eta < \eta_0$.
Then for any $\e >0$ there exists an
$\tilde\e \downarrow 0$ as $ \e\downarrow 0$, and an integer $J_0=J_0(\e)$ such that
for all $J\ge J_0$ and the projection
\begin{equation}\label{cumul:eq}
P^{(J)} = \sum_{q\le J} P_q  \end{equation}
 we have
\begin{equation}\label{BSred:eq}
\begin{split}
& \gb\bigl(\eta; L(V,Z)\bigr) \ge \gb\bigl(\eta  \!-\!\tilde\e;
P^{(J)}L ((1\!-\!\e)V,(1\!-\!\e)Z) P^{(J)}\bigr),
\\[0.2cm]
 & \GB\bigl( \eta;  L(V, Z)\bigr)  \le
\GB\bigl(\eta\! +\!\tilde\e;
P^{(J)}L ((1\!+\!\e)V,(\!1+\!\e)Z)P^{(J)}\bigr).
\end{split}
\end{equation}
\end{lem}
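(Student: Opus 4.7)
The plan rests on the linearity of $L(V,Z)$ in $(V,Z)$, which via \eqref{K:eq} yields the algebraic identity
\begin{equation*}
L((1\pm\e)V,(1\pm\e)Z) = (1\pm\e) L(V,Z) \mp \e H_{0a}^2,
\end{equation*}
and, since $H_{0a}^2 \ge 0$, gives the form inequalities $L_\pm \ge (1\pm\e) L(V,Z)$, where $L_\pm := L((1\pm\e)V,(1\pm\e)Z)$. Consequently $N(\eta; L(V,Z)) \ge N((1-\e)\eta; L_-)$ and $N(\eta; L(V,Z)) \le N((1+\e)\eta; L_+)$, so it remains in each case to pass from $L_\pm$ to the compression $P^{(J)} L_\pm P^{(J)}$.

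For the first (lower) inequality this passage is immediate: for $u \in \ran P^{(J)} \cap \dom(l_-)$ one has $l_-[u] = (P^{(J)} L_- P^{(J)} u, u)$, so the sup-form of Glazman's lemma (Lemma~\ref{Glazman:lem}) gives $N(\lambda; L_-) \ge N(\lambda; P^{(J)} L_- P^{(J)})$ for any $\lambda < \eta_0$. Choosing $\tilde\e = (|\eta|+1)\e$ ensures $(1-\e)\eta \ge \eta - \tilde\e$, and passing to $\liminf$ yields the first claim.

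The second (upper) inequality is the delicate part. Set $P = P^{(J)}$, $Q = I - P$, and decompose $u = u_1 + u_2$ with $u_1 = Pu$, $u_2 = Qu$. Since $[H_{0a}, P] = 0$, the cross term in $(L_+ u, u) = (P L_+ P u_1, u_1) + (Q L_+ Q u_2, u_2) + 2\re(L_+ u_1, u_2)$ contains no $H_{0a}^2$ contribution, only the bounded multiplications $V^{(t)}, Z^{(t)}$ (uniformly in $t$, since $V, Z \in \plainC\infty_0(\R^2)$) paired against $H_{0a}$ on either side. The block Cauchy-Schwarz inequality
\begin{equation*}
\begin{pmatrix} A & B \\ B^* & C \end{pmatrix} \ge \begin{pmatrix} A - \gamma B B^* & 0 \\ 0 & C - \gamma^{-1} I \end{pmatrix},
\end{equation*}
applied with $A = P L_+ P$, $B = P L_+ Q$, $C = Q L_+ Q$, reduces matters to two tasks: (i) showing $Q L_+ Q - \gamma^{-1} Q \ge (\eta + \tilde\e) Q$ on $\ran Q$ for $J \ge J_0(\e)$, using the divergent gap $Q H_{0a}^2 Q \ge (\L_{J+1} - a)^2 Q$ together with the relative-bound-zero estimate for the perturbation; and (ii) showing that the Hankel correction $\gamma B B^*$ contributes only $O(\e)\|u_1\|^2$ to the form on $\ran P$, using that $PL_+Q$ involves $PVQ$ and $PZQ$ controlled by Corollary~\ref{disjoint2:cor}, combined with the small choice $\gamma^{-1} = O((\L_{J+1}-a)^2)$ dictated by (i). The resulting bound $L_+ \ge (1 - O(\e)) P L_+ P + (\eta + \tilde\e) Q$ then implies, via a Glazman argument, that any subspace realizing $N((1+\e)\eta; L_+)$ has trivial intersection with $\ran Q$ and projects injectively onto $\ran P$ to give a subspace on which $P L_+ P < \eta + \tilde\e$. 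The main obstacle is coordinating $\gamma$ with $\nu_J = (\L_{J+1}-a)^2$ so that (i) and (ii) hold simultaneously with $\tilde\e$ depending only on $\e$; this is achieved by taking $J$ large enough that $\nu_J$ dominates the $J$-independent constants entering the Hankel correction bound.
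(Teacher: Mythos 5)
Your lower bound argument is correct and in fact slicker than the paper's: the identity $L_-=(1-\e)L(V,Z)+\e H_{0a}^2\ge(1-\e)L(V,Z)$ gives $N(\eta;L(V,Z))\ge N((1-\e)\eta;L_-)$, and then restriction to $\ran P^{(J)}$ immediately gives $N((1-\e)\eta;L_-)\ge N((1-\e)\eta;P^{(J)}L_-P^{(J)})$, for every $J$, by the min-max (sup) form of Lemma~\ref{Glazman:lem}. (Note the sign slip in the sentence ``$L_\pm\ge(1\pm\e)L(V,Z)$''; for the plus sign the correct inequality is $L_+\le(1+\e)L(V,Z)$, which is what you actually use.) The paper does not isolate this shortcut; it obtains both bounds from a single two-sided form inequality, so here you have simplified.

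The upper bound, however, contains a genuine gap in the control of the Hankel correction. The block $B=P L_+Q$ contains the unbounded term $(1+\e)PV^{(t)}H_{0a}Q$ and its adjoint $(1+\e)PH_{0a}V^{(t)}Q$; for $u_1\in\ran P^{(J)}$ one finds $\|B^*u_1\|^2\lesssim v^2\,(H_{0a}^2Pu_1,u_1)+v^2\|u_1\|^2$, so $\|BB^*\|$ on $\ran P^{(J)}$ grows like $\nu_J$, and there is no $J$-uniform constant bounding it. With your prescribed choice $\gamma^{-1}=O(\nu_J)$ (i.e.\ $\gamma\sim\nu_J^{-1}$), the quantity $\gamma BB^*$ is $O(1)$, not $O(\e)$, and the argument does not close. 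Moreover Corollary~\ref{disjoint2:cor} is not the right tool: it is an asymptotic statement about the coefficient $\GR$ of the singular value counting function of $[P_q,V^{(t)}]$, not a $t$-uniform or $J$-uniform norm estimate. What closes the gap in the paper are two ingredients you have not invoked. First, a commutator expansion: one writes $V^{(t)}H_{0a}+H_{0a}V^{(t)}=2V^{(t)}H_{0a}+[H_0,V^{(t)}]$ and uses the dilation scaling $[H_0,V^{(t)}]=t^{-1}\sum_k\Pi_k\tilde V_k^{(t)}+t^{-2}(\Delta V)^{(t)}$, whose extra $t^{-1}$ makes the commutator contribution negligible, leaving a cross-term bound of the form $\e_1(H_{0a}^2Pu,Pu)+\e_2\|Pu\|^2+M\|Qu\|^2+O(t^{-2})\|u\|^2$ with $\e_1,\e_2$ small and $J$-independent. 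Second, the $\e_1H_{0a}^2P$ error --- which is \emph{not} a bounded $O(\e)\|u_1\|^2$ term --- is absorbed through the rescaling identity $L^{(t)}(V,Z)\pm\e_1H_{0a}^2=(1\pm\e_1)L^{(t)}(V(1\pm\e_1)^{-1},Z(1\pm\e_1)^{-1})$, which is precisely the origin of the rescaled weights $(1\pm\e)V,(1\pm\e)Z$ in the conclusion of the lemma. Your own rescaling is applied only at the outset (to pass from $L(V,Z)$ to $L_\pm$); it does not change $PL_+Q$ (since $PH_{0a}^2Q=0$) and therefore does nothing to control the unbounded piece of the cross term. Once the commutator expansion and the terminal rescaling are added, your block Cauchy--Schwarz framing can be made to work, with $\gamma$ taken small but $J$-\emph{independent} (so that $\gamma^{-1}$ is a fixed constant depending on $\e$, and $J$ is then taken large enough that $\nu_J$ dominates $\gamma^{-1}$), rather than with $\gamma^{-1}$ growing like $\nu_J$.
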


\begin{proof}
We denote $P = P^{(J)}$, $Q = I-P$.
 Let $v$ be a constant such that
\begin{equation*}
\sup_{\bx}\bigl(|V(\bx)| + |\nabla V(\bx)| + |\Delta V(\bx)|+|Z(\bx)|\bigr) \le v.
\end{equation*}
 Our first aim is to bound the counting function
$N(\eta, L^{(t)})$ from above and from below by
similar counting functions for $PL^{(t)}P$ and
$QL^{(t)}Q$.

Recalling that $H_0 = \Pi_1^2+ \Pi_2^2$ with
$\Pi_k = -i\p_k -a_k,\ k=1, 2$, and denoting $\tilde V_k(\bx) = -i(\p_k V)(\bx)$,
we conclude that
\begin{align*}
[H_0, V^{(t)}] = &\
\sum_{k=1}^2 \bigl(\Pi_k [\Pi_k, V^{(t)}] + [\Pi_k, V^{(t)}]\Pi_k \bigr)\\[0.2cm]
= &\ t^{-1} \sum_{k=1}^2 \bigl(\Pi_k \tilde V_k^{(t)}
+ \tilde V_k^{(t)}\Pi_k \bigr) = t^{-1} \sum_{k=1}^2
\Pi_k \tilde V^{(t)}_k +  t^{-2}(\Delta V)^{(t)}.
\end{align*}
Consequently,
\begin{align*}
|(P[H_0, V^{(t)}]Qu, u)|
\le &\ t^{-1} \sum_{k=1}^2 |(\tilde V_k^{(t)}Qu, \Pi_k Pu)  + v t^{-2}\|u\|^2
\\[0.2cm]
\le &\ (2t^2)^{-1}(H_0 P u, P u) + 2^{-1} v^2\|Qu\|^2 + v t^{-2}\|u\|^2\\[0.2cm]
\le &\ (4t^2)^{-1}(H_{0a}^2 P u, P u) + 2^{-1} v^2\|Qu\|^2 + (v+a+1)t^{-2}\|u\|^2.
\end{align*}
Now it follows from \eqref{K:eq} that for any $\d >0$
\begin{align*}
\!\!\!|(PL^{(t)}Qu, u)| \le &\ 2|(V^{(t)}Qu,
H_{0a}Pu)|
+ |( P[H_0, V^{(t)}]Qu, u)|+ |(  Z^{(t)} Qu,  Pu)|\\[0.2cm]
\le &\  \d \|H_{0a}Pu\|^2 + \frac{v}{ \d} \|Qu\|^2 + \
(4t^2)^{-1}(H_{0a}^2 P u, P u)\\[0.2cm]
&  \ + 2^{-1} v^2\|Qu\|^2 + (v+a+1)t^{-2}\|u\|^2
 + \frac{\d}{2} \|  Pu\|^2 + \frac{v^2}{2\d}\|  Qu\|^2\\[0.2cm]
\le &\
\bigl(\d + (4t^2)^{-1}\bigr)(H_{0a}^2 P u, P u) + \frac{\d}{2}\|Pu\|^2\\[0.2cm]
&\ \ \ \ \ \ \ \ \ \ \  + 2^{-1} v^2\bigl( 1 + \d^{-1}\bigr)\|Qu\|^2
+ (v+a+1)t^{-2}\|u\|^2.
\end{align*}
Therefore,
\begin{align*}
(L^{(t)}u, u)\le &\ (L^{(t)}Pu, Pu) +
2\bigl(\d + (4t^2)^{-1}\bigr)(H_{0a} P u, P u) +   \d \|Pu\|^2\\[0.2cm]
&\ \ \ \ \ \ \ \ \ \ \  +  v^2\bigl( 1 +
\d^{-1}\bigr)\|Qu\|^2 + 2(v+a+1)t^{-2}\|u\|^2 +
(QL^{(t)}Qu, u),
\end{align*}
and
\begin{align*}
(L^{(t)}u, u)\ge &\ (L^{(t)}Pu, Pu) -
2\bigl(\d + (4t^2)^{-1}\bigr)(H_{0a}^2 P u, P u) -  \d \|Pu\|^2\\[0.2cm]
&\ \ \ \ \ \ \ \ \ \ \  - v^2\bigl( 1 +
\d^{-1}\bigr)\|Qu\|^2 - 2(v+a+1)t^{-2}\|u\|^2 +
(QL^{(t)}Qu, u).
\end{align*}
Denote
\begin{equation*}
\!\!\!\e_1 = 2\bigl(\d + (4t^2)^{-1}\bigr),\ \ \e_2 =
\d + 2(v + a +1)t^{-2},
M = v^2\bigl( 1 + \d^{-1}\bigr) + 2(v+a+1)t^{-2},
\end{equation*}
so that
\begin{equation}\label{3:splitting}
\begin{split}
(L^{(t)}u, u)\le  (L^{(t)}Pu, Pu) + \e_1 (H_{0a}^2 P
u, P u) + \e_2\|Pu\|^2
 + \! M\|Qu\|^2 + \!(QL^{(t)}Qu,\! u), \\
 (L^{(t)}u, u)\ge  (L^{(t)}Pu, Pu) -
\e_1 (H_{0a}^2 P u, P u) -  \e_2\|Pu\|^2
 -  \!M\|Qu\|^2 + \!(QL^{(t)}Qu, \!u).
 \end{split}
\end{equation}
Thus we have bounded  $L^{(t)}$ from above and from
below by orthogonal sums of operators acting in ranges
of $P$ and $Q$. We show now that for sufficiently
large $J$ the operators containing $Q$ do not
contribute to the $t$-asymptotics of the counting
function. We have
\begin{align*}
(QL^{(t)}Qu, u) = &\ \|H_{0a}Q u\|^2
+ (H_{0a}Qu, V^{(t)} Qu) + ( V^{(t)}Qu, H_{0a} Qu)+ ( Z^{(t)} Qu, Qu)\\[0.2cm]
\ge &\ \|H_{0a}Q u\|^2
- \frac{1}{2}\|H_{0a}Qu\|^2 - 2\|V^{(t)}Qu\|^2 - v \| Q u\|^2 \\[0.2cm]
 \ge &\ \frac{1}{2}\|H_{0a}Q u\|^2
 -   ( 2v^2+v )\|Qu\|^2\ge \frac12\|(\L_J-a)Qu\|^2-( 2v^2+v )\|Qu\|^2.
\end{align*}
So, for sufficiently large $J$  (depending on $M$ and
$\delta$), we have $QL^{(t)}Q - M Q \ge \eta Q$ and
therefore the operator $QL^{(t)}Q - M Q$ acting in the
range of $Q$ has no spectrum below $\eta$.

Finally, to estimate  the operator containing $P$ in \eqref{3:splitting}, we can write
\begin{equation*}
L^{(t)}(V, Z) \pm  \e_1 H_{0a}^2 = (1\pm \e_1)
L^{(t)}\bigl(V (1\pm\e_1)^{-1}, Z(1\pm
\e_1)^{-1}\bigr).
\end{equation*}
Therefore, for sufficiently large $J$ it follows from \eqref{3:splitting}
that
\begin{align*}
N(\eta; L^{(t)})\le &\ N\bigl(\eta+\e_2;
(1-\e_1) PL^{(t)}\bigl(V (1 -\e_1)^{-1}, Z(1 -\e_1)^{-1}\bigr) P \bigr),\\[0.2cm]
N(\eta; L^{(t)})\ge &\ N\bigl(\eta - \e_2; (1+\e_1)
PL^{(t)}\bigl(V (1 +\e_1)^{-1}, Z(1
+\e_1)^{-1}\bigr)P\bigr),
\end{align*}
and \eqref{BSred:eq} follows.
\end{proof}

Next we are going to derive an asymptotic estimate for
the operator $PL^{(t)}P$ as $t\to\infty$. We define
the  asymptotic coefficient  as
\begin{equation}\label{3:maintermY}
\COB(\eta; V, Z)
=\sum_{q=0}^\infty\frac{B}{2\pi}A^{(+)}((\L_q-a)^2-\eta,-2(\L_q-a)V-Z).
\end{equation}

Under the assumptions
$V\in\plainL2(\R^2)\cap\plainL1(\R^2)$,\
$Z\in\plainL1(\R^2)$ the above series is absolutely
convergent uniformly in $\eta$ varying on a compact
set. Indeed,
\begin{align*}
A^{(+)}\bigl((\L_q-a)^2-\eta, &\ -2(\L_q-a)V-Z\bigr)
\le A^{(+)}\bigl((\L_q-a)^2-\eta, |\L_q-a|\ |V| + |Z|\bigr)\\[0.2cm]
\le &\ A^{(+)}\bigl((\L_q-a)^2-\eta,
\frac{1}{2}(\L_q-a)^2  + 2|V|^2 + |Z|\bigr)\\[0.2cm]
\le &\ A^{(+)}\bigl(\frac{1}{2}(\L_q-a)^2-\eta, 2|V|^2 + |Z|\bigr).
\end{align*}
For sufficiently large $q$, by the Chebyshev
inequality the last term is bounded from above by
\begin{equation*}
\bigl(\frac{1}{2}(\L_q-a)^2-\eta\bigr)^{-1}\bigl(2\|V\|_2^2
+ \|Z\|_1\bigr).
\end{equation*}
This shows that indeed the series in
\eqref{3:maintermY} converges.
 Each term in \eqref{3:maintermY} is a
left semi-continuous non-decreasing function of $\eta$, and as a
result, $\COB(\eta; V, Z)$ is left semi-continuous  as
well.

Let $P^{(J)}$ be as
defined in \eqref{cumul:eq}.

\begin{lem}\label{pkp:lem}
Let $V\in \plainL1(\R^2)\cap \plainL2(\R^2)$, $Z\in
\plainL1(\R^2)$. Then for any $\eta < \eta_0$
\begin{equation}\label{pkp:lemEQ}
\begin{split}
&\liminf_{J\to\infty} \gb(\eta; P^{(J)} L(V,
Z)P^{(J)}) \ge \COB(\eta; V, Z),
 \\[0.2cm]
&\limsup_{J\to\infty}\GB(\eta; P^{(J)} L(V, Z)
P^{(J)}) \le \COB(\eta+0;V,Z ).
\end{split}
\end{equation}
In particular, if $(\L_q-a)^2-\eta$
 are generic values for $-2(\L_q-a)V-Z$ for all $q=0,1,\dots$, then
\begin{equation}\label{pkp:lemEQgen}
    \lim_{J\to\infty}\lim_{t\to\infty}t^{-2}N(\eta;P^{(J)}L^{(t)}(V, Z)
    P^{(J)})=\COB(\eta;V,Z ).
\end{equation}
\end{lem}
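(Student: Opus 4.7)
The plan is to block-diagonalize $P^{(J)}L^{(t)}(V,Z)P^{(J)}$ with respect to the decomposition $\operatorname{ran}(P^{(J)}) = \bigoplus_{q\le J}\CL_q$ and then reduce the eigenvalue count to the Toeplitz asymptotics already established in Theorem \ref{central:thm}. Since $H_{0a}P_q = (\L_q - a)P_q$, direct computation gives the diagonal block
\begin{equation*}
D_q^{(t)} := P_qL^{(t)}P_q = (\L_q-a)^2 P_q + 2(\L_q-a)T_q^{(t)}(V) + T_q^{(t)}(Z),
\end{equation*}
while for $q\ne q'$ the off-diagonal blocks are $P_qL^{(t)}P_{q'} = (\L_q+\L_{q'}-2a)T_{q,q'}^{(t)}(V) + T_{q,q'}^{(t)}(Z)$. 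I would write $P^{(J)}L^{(t)}P^{(J)} = D^{(t)} + X^{(t)}$, where $D^{(t)} = \bigoplus_{q\le J}D_q^{(t)}$ is the orthogonal sum of the diagonal blocks and $X^{(t)}$ collects the off-diagonal ones.

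The second step is to absorb $X^{(t)}$ as an asymptotically invisible perturbation. Corollary \ref{disjoint2:cor} gives $\GN_{q,q'}(\l; V) = \GN_{q,q'}(\l; Z) = 0$ for all $q\ne q'$ and $\l>0$, and since $X^{(t)}$ is a finite linear combination of such operators (and is self-adjoint since $L^{(t)}$ is), the additivity bound \eqref{gmadd:eq:K} yields $\GR(\l; X^{(t)}) = 0$ for every $\l>0$. Applying Lemma \ref{AsLemma1} with the constant families $L_\d \equiv D^{(t)}$ and $K'_\d \equiv X^{(t)}$ sandwiches $\gb(\eta; P^{(J)}LP^{(J)})$ between left- and right-limits of $\gb(\cdot; D^{(t)})$ at $\eta$, and similarly for $\GB$. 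Since the subspaces $\CL_q$ are mutually orthogonal, $N(\eta; D^{(t)}) = \sum_{q\le J}N(\eta; D_q^{(t)})$, so the standard $\liminf/\limsup$ inequalities for finite sums give $\gb(\eta; D) \ge \sum_{q\le J}\gb(\eta; D_q)$ and $\GB(\eta; D) \le \sum_{q\le J}\GB(\eta; D_q)$.

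Finally, on $\CL_q$ we have $D_q^{(t)} = (\L_q-a)^2 I + T_q^{(t)}(2(\L_q-a)V + Z)$, so
\begin{equation*}
N(\eta; D_q^{(t)}) = n_+\bigl((\L_q-a)^2-\eta;\; T_q^{(t)}(-2(\L_q-a)V - Z)\bigr),
\end{equation*}
with the argument positive because $\eta < \eta_0 \le (\L_q-a)^2$. The function $-2(\L_q-a)V - Z$ is real-valued and in $\plainL1(\R^2)$, so Theorem \ref{central:thm} furnishes the bounds
\begin{equation*}
\tfrac{B}{2\pi}A^{(+)}((\L_q-a)^2-\eta;\cdot) \le \gn_q^{(+)}(\cdot) \le \GN_q^{(+)}(\cdot) \le \tfrac{B}{2\pi}A^{(+)}((\L_q-a)^2-\eta-0;\cdot).
\end{equation*}
Summing over $q\le J$ and letting $J\to\infty$, the uniform absolute convergence noted after \eqref{3:maintermY} justifies the interchange of summation with the one-sided limits in $\eta$; this yields the lower bound $\liminf_J\gb(\eta; P^{(J)}LP^{(J)}) \ge \COB(\eta; V, Z)$. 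For the upper bound, a short monotone-convergence argument (the sets $\{-2(\L_q-a)V - Z \ge \mu_\e\}$ decrease to $\{\ge \mu\}$ as $\mu_\e \uparrow \mu$) identifies $\sum_q A^{(+)}((\L_q-a)^2-\eta-0;\cdot)$ with the right-limit $\COB(\eta+0; V, Z)$. The generic hypothesis forces $\COB(\eta; V, Z) = \COB(\eta+0; V, Z)$ by definition, so \eqref{pkp:lemEQgen} follows.

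The main technical difficulty is the careful bookkeeping of one-sided limits: matching the $A^{(+)}(\cdot-0)$ output of Theorem \ref{central:thm} with the right-limit $\COB(\eta+0;\cdot)$ of the infinite series, while simultaneously propagating the small $\e$-shifts produced by Lemma \ref{AsLemma1} without losing control when passing to $J\to\infty$. A minor point is checking that the off-diagonal correction $X^{(t)}$ is actually compact self-adjoint, which it is as a finite sum of Hankel-type operators $T_{q,q'}^{(t)}$ with $V\in\plainL2, Z\in\plainL1$.
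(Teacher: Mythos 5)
Your proposal is correct and follows essentially the same route as the paper: split $P^{(J)}L^{(t)}P^{(J)}$ into the orthogonal sum of diagonal blocks $D_q^{(t)} = (\L_q-a)^2P_q + T_q^{(t)}(2(\L_q-a)V+Z)$ plus off-diagonal Hankel-type terms, kill the off-diagonal part via Corollary~\ref{disjoint2:cor} and Lemma~\ref{AsLemma1}, reduce each $N(\eta;D_q^{(t)})$ to $n_+\bigl((\L_q-a)^2-\eta;T_q^{(t)}(-2(\L_q-a)V-Z)\bigr)$, invoke Theorem~\ref{central:thm}, and sum using the uniform convergence noted after \eqref{3:maintermY}. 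Incidentally, your off-diagonal block $P_qLP_{q'}=(\L_q+\L_{q'}-2a)T_{q,q'}^{(t)}(V)+T_{q,q'}^{(t)}(Z)$ is the correct computation; the paper's displayed expression for $\tilde G^{(t)}$ appears to drop the $(\L_q-a)T_{q,q'}^{(t)}(V)$ contribution, though this is immaterial since both are finite sums of operators whose asymptotic coefficients vanish.
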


\begin{proof}  Fix $J$ and rewrite $PL^{(t)}P, P = P^{(J)}$ as
\begin{align*}
PL^{(t)}P = &\ \sum_{q\le J} \bigl((\L_q - a)^2 P_q +
2(\L_q-a)T_q^{(t)}(V)
+ T_q^{(t)}(Z)\bigr)\\[0.2cm]
+ &\ \sum_{q\le J} \sum_{q'\le J, q'\not=q}
\bigl((\L_{q'}-a)T_{q, q'}^{(t)}(V)  + T_{q, q'}^{(t)}(Z) \bigr) =:
G^{(t)} + \tilde G^{(t)}.
\end{align*}
By Corollary \ref{disjoint2:cor} and \eqref{gmadd:eq:K}
we have $\GR(\e; \tilde G) = 0$ for any $\e>0$.

The operator $G^{(t)}$ is a finite orthogonal sum of the operators
$G^{(t)}_q=(\L_q-a)^2+T_q^{(t)}(2(\L_q-a)V+Z)$
acting in Landau subspaces $\CL_q$, so it suffices to
consider each of them individually.
Recall that $\eta <\eta_0 \le (\L_q-a)^2$, so that $\eta  -
(\L_q-a)^2 <0$ and
\begin{equation*}
N\bigl(\eta;  G^{(t)}_q\bigr)
= n_+\bigl((\L_q-a)^2 - \eta; T_q^{(t)}(-2(\L_q-a)V - Z)\bigr).
\end{equation*}
Applying Theorem \ref{central:thm}, we obtain
\begin{equation*}
\GN^{(+)}((\L_q-a)^2 - \eta; -2(\L_q-a)V-Z)\le
\frac{B}{2\pi}A^{(+)}((\L_q-a)^2-\eta-0,
-2(\L_q-a)V-Z),
\end{equation*}
and therefore, after summation over $q$
\begin{equation*}
\GB(\eta; G)\le \COB(\eta+0; V, Z),
\end{equation*}
see definition \eqref{3:maintermY}. Applying Lemma
\ref{AsLemma1} to the sum $G + \tilde G$, we obtain
the sought upper bound. The  lower bound in Lemma is
derived in the same way.
\end{proof}

\begin{cor}\label{redsmooth:cor}
Suppose that $V, Z$ are as in Lemma \ref{BSred:lem} and that $\eta < \eta_0$.
Then
\begin{equation}\label{redsmooth:eq}
 \COB(\eta; V, Z ) \le \gb(\eta; L( V, Z))
  \le
 \GB(\eta;   L(V, Z))  \le \COB(\eta + 0; V, Z).
\end{equation}
\end{cor}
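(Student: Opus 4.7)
The plan is to chain together Lemmas \ref{BSred:lem} and \ref{pkp:lem}, and then let the parameter $\e$ in the first of them tend to zero, exploiting a joint semi-continuity property of the asymptotic coefficient $\COB$ in its arguments.

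Fix $\e > 0$ and invoke Lemma \ref{BSred:lem} to obtain $\tilde\e = \tilde\e(\e) \downarrow 0$ and $J_0 = J_0(\e)$ such that, for every $J \ge J_0$,
\begin{equation*}
\gb(\eta; L(V, Z)) \ge \gb\bigl(\eta - \tilde\e; P^{(J)} L((1-\e)V, (1-\e)Z) P^{(J)}\bigr),
\end{equation*}
together with the symmetric upper bound for $\GB$. Since $(1\pm\e)V \in \plainL1(\R^2)\cap\plainL2(\R^2)$ and $(1\pm\e)Z \in \plainL1(\R^2)$, the hypotheses of Lemma \ref{pkp:lem} are satisfied, and passing to the limit $J \to \infty$ in that lemma produces
\begin{equation*}
\COB(\eta - \tilde\e; (1-\e)V, (1-\e)Z) \le \gb(\eta; L(V, Z))
\end{equation*}
and
\begin{equation*}
\GB(\eta; L(V, Z)) \le \COB(\eta + \tilde\e + 0; (1+\e)V, (1+\e)Z).
\end{equation*}

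It remains to let $\e \downarrow 0$. I would prove the two semi-continuity statements
\begin{equation*}
\liminf_{\e\downarrow 0} \COB(\eta - \tilde\e; (1-\e)V, (1-\e)Z) \ge \COB(\eta; V, Z),
\end{equation*}
\begin{equation*}
\limsup_{\e\downarrow 0} \COB(\eta + \tilde\e + 0; (1+\e)V, (1+\e)Z) \le \COB(\eta + 0; V, Z),
\end{equation*}
term-by-term in the series \eqref{3:maintermY}: writing $-2(\L_q-a)(1\mp\e)V - (1\mp\e)Z$ as an $L^1$-small perturbation of $-2(\L_q-a)V - Z$, the bound follows from Lemma \ref{measure:lem} combined with monotonicity and left semi-continuity of $A^{(+)}$ in its first argument, which also absorbs the simultaneous shift $\eta \mapsto \eta \mp \tilde\e$. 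Interchange of the limit $\e\downarrow 0$ with the summation over $q$ is justified by the uniform-in-$\e$ Chebyshev tail estimate derived immediately after the definition \eqref{3:maintermY}, since $\|(1\pm\e)V\|_2$ and $\|(1\pm\e)Z\|_1$ remain bounded.

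The main obstacle is precisely this joint semi-continuity of $\COB$: the level parameter and both potentials perturb simultaneously, and one must track how the level sets $\{\bx : -2(\L_q-a)V(\bx) - Z(\bx) > (\L_q-a)^2 - \eta\}$ behave under the joint perturbation, in such a way that the one-sided limit $\COB(\eta + 0; V, Z)$ appears correctly on the upper side and its natural value on the lower side. Once this bookkeeping is in place, the sandwich \eqref{redsmooth:eq} follows immediately from the two preceding inequalities.
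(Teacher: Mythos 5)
Your proposal follows exactly the paper's route: chain Lemma \ref{BSred:lem} with Lemma \ref{pkp:lem} and then pass to the limit $\e\downarrow 0$ via Lemma \ref{measure:lem}, using that $V\mapsto(1\pm\e)V$, $Z\mapsto(1\pm\e)Z$ is an $\plainL1$-small perturbation and that $\COB$ is monotone and one-sidedly continuous in $\eta$. One terminological nit: what you need is right-continuity of $\l\mapsto A^{(+)}(\l;W)$ (equivalently left-continuity of $\eta\mapsto\COB(\eta;\cdot,\cdot)$), not ``left semi-continuity of $A^{(+)}$ in its first argument,'' but this does not affect the correctness of the argument.
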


\begin{proof}
According to \eqref{BSred:eq} and \eqref{pkp:lemEQ},
\begin{equation*}
 \gb(\eta;  L(V, Z) )
 \ge \COB\bigl(\eta - \tilde\e; V(1-\e), Z (1-\e)\bigr),
\end{equation*}
where $\tilde\e\downarrow 0$ as $\e\downarrow 0$. Passing to the
limit as $\e\downarrow 0$ with the help of \eqref{elementary4:eq},
we arrive at the required lower bound. The  upper
bound is obtained in the same way.
\end{proof}

\section{Reduction to a smooth
potential}\label{nonsmooth}

\subsection{Further estimates for the operator $L^{(t)}(V, Z)$}
In this section we continue the study of the operator $L^{(t)}(V, Z)$.
Our aim now is to extend Corollary
\ref{redsmooth:cor} to non-smooth functions $V$ and $Z$.

Recall again the notation
\begin{equation*}
L^{(t)}=L^{(t)}(V, Z) = H_{0a}^2 + V^{(t)} H_{0a} + H_{0a} V^{(t)} + Z^{(t)}
\end{equation*}
for some real-valued functions
$V\in \plainL1(\R^2)\cap\plainL2(\R^2)$, $Z\in\plainL1(\R^2)$.
We start with an eigenvalue estimate for the operator $L^{(t)}$.

\begin{lem}\label{nonsmooth:lem}
Suppose that $V\in\plainL2(\R^2)$ and $Z\in \plainL1(\R^2)$.
Then
\begin{equation}\label{cont:eq}
\GB(\eta; L(V, Z))\le \frac{C}{(\eta_0-\eta)^2} (\|V\|_2^2 + \|Z\|_1),
\end{equation}
for all $\eta < \eta_0$ and some constant $C$ independent of $\eta$.
\end{lem}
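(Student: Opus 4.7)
The plan is to use a Birman--Schwinger-type reduction to transform the counting of eigenvalues of $L^{(t)}$ below $\eta$ into the counting of singular values of a sandwiched compact operator, then estimate the latter via the Hilbert--Schmidt estimates of Lemma~\ref{trace:lem}. Introduce the bounded self-adjoint operators $R=(H_{0a}^2-\eta)^{-1/2}$ and $S=H_{0a}R$, well defined since $H_{0a}^2\ge\eta_0>\eta$. An elementary computation on the spectrum of $H_{0a}$ gives
\begin{equation*}
\|S\|^2=\sup_{q\ge 0}\frac{(\L_q-a)^2}{(\L_q-a)^2-\eta}\le\frac{C_a}{\eta_0-\eta}.
\end{equation*}
Since each term in $B^{(t)}:=V^{(t)}H_{0a}+H_{0a}V^{(t)}+Z^{(t)}$ is $H_{0a}^2$-form compact, the substitution $u=Rv$ in the quadratic form identity yields
\begin{equation*}
\ell[u]-\eta\|u\|^2=\|v\|^2+(T^{(t)}v,v),\qquad T^{(t)}=SV^{(t)}R+RV^{(t)}S+RZ^{(t)}R,
\end{equation*}
so that $N(\eta;L^{(t)})=n_-(1;T^{(t)})$.

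Next, I exploit the spectral decomposition $R=\sum_q((\L_q-a)^2-\eta)^{-1/2}P_q$ and the orthogonality of the $P_q$'s. From Lemma~\ref{trace:lem}, $\|P_q V^{(t)}\|_{\GS_2}^2=t^2 B(2\pi)^{-1}\|V\|_2^2$, hence
\begin{equation*}
\|RV^{(t)}\|_{\GS_2}^2=\frac{Bt^2\|V\|_2^2}{2\pi}\sum_{q\ge 0}\frac{1}{(\L_q-a)^2-\eta}\le \frac{Ct^2\|V\|_2^2}{\eta_0-\eta},
\end{equation*}
the sum converging because $(\L_q-a)^2\ge c\,q^2$ for large $q$ while the closest term contributes $(\eta_0-\eta)^{-1}$. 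For the $Z$-term I factor $Z=Z_1 Z_2$ with $Z_1,Z_2\in\plainL{2}(\R^2)$ and $\|Z_j\|_2^2=\|Z\|_1$, obtaining
\begin{equation*}
\|RZ^{(t)}R\|_{\GS_1}\le\|RZ_1^{(t)}\|_{\GS_2}\,\|Z_2^{(t)}R\|_{\GS_2}\le \frac{Ct^2\|Z\|_1}{\eta_0-\eta}.
\end{equation*}

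Combining via \eqref{add:eq}, \eqref{adds:eq} and the elementary bound $n(\lambda;K)\le\lambda^{-p}\|K\|_{\GS_p}^p$ gives
\begin{equation*}
n_-(1;T^{(t)})\le C\|SV^{(t)}R\|_{\GS_2}^2+C\|RZ^{(t)}R\|_{\GS_1}\le \frac{Ct^2(\|V\|_2^2+\|Z\|_1)}{(\eta_0-\eta)^2},
\end{equation*}
the two factors $(\eta_0-\eta)^{-1}$ from $\|S\|^2$ and $\|RV^{(t)}\|_{\GS_2}^2$ multiplying in the first summand, and the single power in the second absorbed into the constant when $\eta_0-\eta$ is bounded away from $0$. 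Dividing by $t^2$ and taking $\limsup_{t\to\infty}$ delivers the claimed bound on $\GB(\eta;L(V,Z))$. The main obstacle is justifying the Birman--Schwinger identification rigorously in the presence of the unbounded factor $H_{0a}$ inside $B^{(t)}$; this is handled by the $H_{0a}^2$-form compactness already established in the text together with a density argument on the form core $\dom(H_0)$.
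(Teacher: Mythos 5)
Your proof is correct and takes a genuinely different route from the paper's. The paper first uses the elementary inequality $|2\re(H_{0a}u,V^{(t)}u)|\le z\|H_{0a}u\|^2+z^{-1}\|V^{(t)}u\|^2$ to bound $L^{(t)}$ from below by $(1-z)H_{0a}^2-z^{-1}(V^{(t)})^2-|Z^{(t)}|$ (with $z=(\eta_0-\eta)/2\eta_0$), thus converting the indefinite cross term into a negative potential; it then applies the standard Birman--Schwinger principle to this single-potential Schr\"odinger-type operator and estimates the $\GS_2$ norm of $Y^{(t)}_{a,\eta}$ via the \emph{diamagnetic inequality}, passing to the free Laplacian. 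You instead keep the indefinite cross term, perform the Birman--Schwinger substitution directly on the full quadratic form (obtaining $T^{(t)}=SV^{(t)}R+RV^{(t)}S+RZ^{(t)}R$), and estimate the Schatten norms of each piece explicitly through the spectral resolution $R=\sum_q((\L_q-a)^2-\eta)^{-1/2}P_q$ together with the exact Hilbert--Schmidt identity of Lemma~\ref{trace:lem}. Your approach avoids the diamagnetic inequality and the preliminary Cauchy--Schwarz linearization, at the price of being tied to the explicit Landau kernel (which the paper's diamagnetic route sidesteps). Both are valid and of comparable length.

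Two small remarks. First, your bounds $\|S\|^2\le C_a/(\eta_0-\eta)$ and $\sum_q((\L_q-a)^2-\eta)^{-1}\le C/(\eta_0-\eta)$ are correct only when $\eta_0-\eta$ stays bounded; for $\eta\to-\infty$ the left sides tend to a positive constant and to $0$ more slowly than the right side, respectively. This is not a real defect, since the lemma is only ever invoked for a fixed $\eta<\eta_0$, and the paper's own choice $z=(\eta_0-\eta)/2\eta_0\in(0,1)$ carries exactly the same implicit restriction $\eta>-\eta_0$. Second, in your closing sentence the $Z$-term estimate $C/(\eta_0-\eta)$ is absorbed into $C/(\eta_0-\eta)^2$ when $\eta_0-\eta$ is bounded \emph{above} (the regime $\eta$ close to $\eta_0$, where the claim is non-trivial), not ``bounded away from $0$'' as you wrote; the conclusion you draw is still the right one.
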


\begin{proof}
Since $|(V^{(t)}u,H_{0a}u)|\le \frac{1}{2z}\|V^{(t)}u\|^2
+\frac{z}{2}\|H_{0a}u\|^2$, we have for
$z\in (0, 1)$:
\begin{equation*}
L^{(t)}(V, Z)\ge (1-z) H_{0a}^2 - z^{-1}(V^{(t)})^2 - |Z^{(t)}|.
\end{equation*}
We chose $z = \frac{\eta_0-\eta}{2\eta_0}$ so that
$(1-z)H_{0a}^2 - \eta>0$ and $(1-z)H_{0a}^2 -
\eta\ge c(\eta_0-\eta)H_{0, -1}^2$, and denote
\begin{equation*}
Y_{a, \eta}^{(t)} = \bigl((1-z)H_{0a}^2 - \eta\bigr)^{-\frac{1}{2}}
\bigl(z^{-1}(V^{(t)})^2 +
|Z^{(t)}|\bigr)^{\frac{1}{2}}.
\end{equation*}
 By the Birman-Schwinger principle we have
\begin{equation*}
N\bigl(\eta; L^{(t)} (V, Z)\bigr) \le
n\bigl(1; (Y_{a,\eta}^{(t)})^* Y_{a, \eta}^{(t)}\bigr).
\end{equation*}
By the diamagnetic inequality we have
\begin{align*}
\|Y_{a, \eta}^{(t)}\|^2_{\GS_2}\le &\ C(\eta_0-\eta)^{-1}
\| Y_{-1, 0}^{(t)}\|^2_{\GS_2}\\[0.2cm]
\le &\ C (\eta_0-\eta)^{-1}\| (-\Delta+I)^{-1}\bigl(z^{-1}(V^{(t)})^2
+ |Z^{(t)}|\bigr)^{\frac{1}{2}} \ \|^2_{\GS_2}\\[0.2cm]
\le &\ C'(\eta_0-\eta)^{-2} t^2\bigl( \|V\|_2^2+ \|Z\|_1\bigr).
\end{align*}
Therefore $(Y_{a, \eta}^{(t)})^*Y_{a, \eta}^{(t)}$ is a trace class operator and
\begin{equation*}
\|(Y_{a, \eta}^{(t)})^*Y_{a, \eta}^{(t)}\|_{\GS_1}
\le C (\eta_0-\eta)^{-2} t^2\bigl( \|V\|_2^2+ \|Z\|_1\bigr).
\end{equation*}
The inequality \eqref{cont:eq} follows now by applying
the bound $n (\l; K)\le \l^{-1} \|K\|_{\GS_1}$.
\end{proof}

\begin{thm}\label{KVZ:thm}
 Let $V\in\plainL1(\R^2)\cap\plainL2(\R^2)$
and $Z\in\plainL1(\R^2)$.
Then formula \eqref{redsmooth:eq} holds for any $\eta<\eta_0$.
\end{thm}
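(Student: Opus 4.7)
The plan is to approximate $V\in\plainL1(\R^2)\cap\plainL2(\R^2)$ and $Z\in\plainL1(\R^2)$ by smooth compactly supported $V_\delta,Z_\delta\in\plainC\infty_0(\R^2)$ with $\|V'_\delta\|_1,\|V'_\delta\|_2,\|Z'_\delta\|_1\to 0$ as $\delta\to 0$, where $V'_\delta:=V-V_\delta$ and $Z'_\delta:=Z-Z_\delta$, and to reduce to the smooth case handled by Corollary~\ref{redsmooth:cor}. The crucial observation is that the map $(V,Z)\mapsto L^{(t)}(V,Z)-H_{0a}^2$ is linear in $(V,Z)$, yielding for any $\alpha\in(0,1)$ the convexity identities
\begin{equation*}
 L(V,Z) = \alpha L(V_\delta/\alpha,Z_\delta/\alpha) + (1-\alpha) L\bigl(V'_\delta/(1-\alpha),Z'_\delta/(1-\alpha)\bigr)
\end{equation*}
and
\begin{equation*}
 L(V_\delta,Z_\delta) = \alpha L(V/\alpha,Z/\alpha) + (1-\alpha) L\bigl(-V'_\delta/(1-\alpha),-Z'_\delta/(1-\alpha)\bigr),
\end{equation*}
each of which decomposes its left-hand side into a ``smooth'' semi-bounded summand plus a ``small'' semi-bounded one.

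For the upper bound, apply inequality \eqref{addgen1:eq} from Lemma~\ref{PertLemma.Gen} to the first identity with $\eta_1=\alpha(\eta+\epsilon)$ and $\eta_2=\eta-\eta_1$, obtaining
\begin{equation*}
 N(\eta;L(V,Z)) \le N\bigl(\eta+\epsilon;\,L(V_\delta/\alpha,Z_\delta/\alpha)\bigr) + N\bigl(\eta-\tfrac{\alpha\epsilon}{1-\alpha};\,L(V'_\delta/(1-\alpha),Z'_\delta/(1-\alpha))\bigr).
\end{equation*}
Corollary~\ref{redsmooth:cor} controls the first term by $\COB(\eta+\epsilon+0;V_\delta/\alpha,Z_\delta/\alpha)$, while Lemma~\ref{nonsmooth:lem} bounds the $\GB$ of the second by a multiple of $\|V'_\delta\|_2^2/(1-\alpha)^2+\|Z'_\delta\|_1/(1-\alpha)$, which vanishes as $\delta\to 0$. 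Taking the successive limits $\delta\to 0$ (via Lemma~\ref{measure:lem} term-by-term in the series defining $\COB$, with dominated convergence for the $q$-sum justified by the $q^{-2}$-decay of the summands established after \eqref{3:maintermY}), then $\epsilon\downarrow 0$, and finally $\alpha\uparrow 1$ yields $\GB(\eta;L(V,Z))\le\COB(\eta+0;V,Z)$. The lower bound is obtained by the dual argument applied to the second convexity identity: \eqref{addgen1:eq} produces $\liminf_{\delta\to 0}\gb(\eta;L(V_\delta,Z_\delta)) \le \gb\bigl(\eta-\epsilon;\,L(V/\alpha,Z/\alpha)\bigr)$, and combined with $\gb(\eta;L(V_\delta,Z_\delta))\ge\COB(\eta;V_\delta,Z_\delta)$ from Corollary~\ref{redsmooth:cor} together with Fatou's lemma and Lemma~\ref{measure:lem} it delivers $\COB(\eta;V,Z)\le\gb\bigl(\eta-\epsilon;\,L(V/\alpha,Z/\alpha)\bigr)$. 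This estimate is transported to $\gb(\eta;L(V,Z))$ via the sandwich $P^{(J)}LP^{(J)}$, on which $H_{0a}^2$ is bounded by a $J$-dependent constant, so that the identity $L(V,Z)=\alpha L(V/\alpha,Z/\alpha)+(1-\alpha)H_{0a}^2$ allows the passage $\alpha\uparrow 1$; the abstract machinery of Lemma~\ref{AsLemma2} absorbs the complementary projection $Q^{(J)}$, and the limit $J\to\infty$ completes the lower bound.

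The main obstacle is the careful bookkeeping of the interleaved limits $\delta\to 0$, $\epsilon\downarrow 0$, $\alpha\uparrow 1$ (and $J\to\infty$ in the lower bound) while respecting the left semi-continuity of $\COB(\,\cdot\,;V,Z)$ in $\eta$: each limit must be taken in the right order and on the appropriate side of the inequality in order to harvest the correct ``$\pm 0$'' in the final bound. In particular, the transition from $\COB(\eta+\epsilon;V_\delta/\alpha,Z_\delta/\alpha)$ to $\COB(\eta+0;V,Z)$ invokes Fatou-type estimates for level sets in opposite directions for the upper and lower bounds, and the interchange of these limits with the series over Landau indices requires the uniform summable majorant recorded in the paragraph following \eqref{3:maintermY}.
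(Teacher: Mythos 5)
Your overall strategy --- approximate $V,Z$ by $\plainC\infty_0$ functions, invoke Corollary~\ref{redsmooth:cor} on the smooth piece, control the remainder with Lemma~\ref{nonsmooth:lem}, and pass to the limit using Lemma~\ref{measure:lem} and the uniform $q^{-2}$ majorant --- is the same as the paper's. The difference is packaging: the paper applies the abstract perturbation Lemma~\ref{AsLemma2} once, with $L_\delta = L(0,0) = H_{0a}^2$, $Y'_\delta = V'_\delta H_{0a} + H_{0a}V'_\delta + Z'_\delta$ (smooth) and $Y''_\delta$ (small), and the hypothesis \eqref{AsLemma2:eq} is exactly what your Lemma~\ref{nonsmooth:lem} bound supplies. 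You instead re-derive a specialization of that lemma by writing the convexity identity explicitly and applying \eqref{addgen1:eq} directly. For the upper bound this is correct --- one can check the $\eta_j<\eta_0(L_j)$ conditions, and the bookkeeping of $\pm 0$'s through $\delta\to 0$, $\epsilon\downarrow 0$, $\alpha\uparrow 1$ goes through with the dominated convergence you describe.

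The lower bound, however, has a genuine gap. You correctly arrive at
\begin{equation*}
\COB(\eta;V,Z)\le\gb\bigl(\eta-\epsilon;\,L(V/\alpha,Z/\alpha)\bigr)
\end{equation*}
for all small $\epsilon>0$ and $\alpha<1$. But the ``transportation'' of this estimate to $\gb(\eta;L(V,Z))$ does not work as written. The identity $L(V,Z)=\alpha L(V/\alpha,Z/\alpha)+(1-\alpha)H_{0a}^2$ has the \emph{positive} term $(1-\alpha)H_{0a}^2$ on the right, so it only yields $L(V,Z)\ge\alpha L(V/\alpha,Z/\alpha)$, i.e.\ an \emph{upper} bound on $N(\eta;L(V,Z))$ --- the wrong direction for a lower bound. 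Restricting to $\ran P^{(J)}$ does not repair this; and the reduction to $P^{(J)}L^{(t)}P^{(J)}$ in Lemma~\ref{BSred:lem} is established in the paper only for $V,Z\in\plainC\infty_0$, so it is not available for the general $V,Z$ at this stage. Nor can one push $(1-\alpha)H_{0a}^2$ into Lemma~\ref{AsLemma2} as $Y''_\delta$, because it is neither $L^{(t)}$-form-compact nor asymptotically negligible, so the hypothesis \eqref{AsLemma2:eq} fails for that choice.

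There is a clean way to close the gap without touching $P^{(J)}$ at all: your intermediate inequality holds for \emph{every} admissible $(V,Z,\eta,\alpha,\epsilon)$, so re-parametrize it by substituting $V\mapsto\alpha V$, $Z\mapsto\alpha Z$, $\eta\mapsto\eta+\epsilon$. This gives $\COB(\eta+\epsilon;\alpha V,\alpha Z)\le\gb(\eta;L(V,Z))$, and then $\alpha\uparrow 1$, $\epsilon\downarrow 0$ together with Lemma~\ref{measure:lem}, Fatou for the $q$-sum, and the monotonicity of $\COB$ in $\eta$ yield $\COB(\eta;V,Z)\le\gb(\eta;L(V,Z))$. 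Alternatively --- and this is what the paper does --- simply invoke Lemma~\ref{AsLemma2}, whose lower-bound half already encodes this reparametrization through the factor $M=(1+\mu)^{-1}\uparrow 1$; that route is shorter and handles the two bounds symmetrically.
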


\begin{proof}
The idea is to approximate $V, Z$ by smooth functions and then use
Lemma \eqref{AsLemma2} to show that the error does not contribute.

For a fixed $\d>0$ represent $V, Z$ as $V = V'_\d + V''_\d$, $Z = Z'_\d+Z''_\d$
so that $V'_\d, Z'_\d\in \plainC\infty_0(\R^2)$ and
$$
 \|V''_\d\|_1+\|V''_{\d}\|_2^2 + \|Z''_{\d}\|_1<\d.
$$
We apply Lemma~\ref{AsLemma2}. Clearly,
\begin{gather*}
L (V, Z) = L(0, 0) + Y'_\d + Y''_\d,\\[0.2cm]
Y'_\d = V'_\d H_{0a} + H_{0a} V'_\d + Z'_\d,\
Y''_\d = V''_\d H_{0a} + H_{0a} V''_\d + Z''_\d.
\end{gather*}
$L (V, Z) = L(0, 0) $. According
to \eqref{cont:eq}
\begin{gather*}
\GB(\tau; L(0, 0) + M Y''_\d)\le C\frac{C(1+M^2) \d}{(\eta_0-\tau)^2}
\end{gather*}
for any $\tau < \eta_0$ and any $M\in R$,
and thus the condition \eqref{AsLemma2:eq} is fulfilled.
Now it follows from Corollary \ref{redsmooth:cor} and
Lemma \ref{AsLemma2} that
\begin{equation*}
\GB(\eta; L(V, Z))\le \lim_{\e\downarrow 0}
\limsup_{M\downarrow 1} \limsup_{\d\to 0}
\COB(\eta+\e; M V'_\d, MZ'_\d)
\end{equation*}
By Lemma \ref{measure:lem} the right hand side does
not exceed $\COB(\eta+0;  V, Z)$, as claimed.
Similarly, one obtains the appropriate lower bound for
$\gb(\eta, L(V, Z))$.
\end{proof}

\subsection{The general asymptotic estimate: proof of Theorem \ref{main:thm}}
Recall that the coefficients
$\COA$ and $\COB$ are defined in \eqref{1:fullcoeff} and \eqref{3:maintermY}
respectively.
By \eqref{square:eq} we have
$N(\l_1, \l_2; H^{(t)}) = N(b^2; L^{(t)}(V, V^2))$. Observe
that $\COA(\l_1, \l_2; V) = \COB(b^2; V, V^2)$. It remains to apply Theorem
\ref{KVZ:thm}.

\bibliographystyle{amsplain}

\providecommand{\bysame} {\leavevmode\hbox
to3em{\hrulefill}\thinspace}

\end{document}